\numberwithin{equation}{section}
\let\cal\mathcal
\def\Ascr{{\cal A}}
\def\Cscr{{\cal C}}
\def\Dscr{{\cal D}}
\def\Escr{{\cal E}}
\def\Fscr{{\cal F}}
\def\Mscr{{\cal M}}
\def\Oscr{{\cal O}}
\def\Pscr{{\cal P}}
\def\Tscr{{\cal T}}
\def\Vscr{{\cal V}}
\let\blb\mathbb
\def \PP{{\blb P}}
\def \AA{{\blb A}}
\def \ZZ{{\blb Z}}
\def \NN{{\blb N}}
\def \RR{{\blb R}}
\def \HH{{\blb H}}
\def\udim{\operatorname{\underline{\dim}}}
\def\odim{\operatorname{\overline{\dim}}}
\def\id{\text{id}}
\def\Id{\operatorname{id}}
\def\Lotimes{\overset{L}{\otimes}}
\def\Mod{\operatorname{Mod}}
\def\mod{\operatorname{mod}}
\def\gr{\operatorname{gr}}
\def\Supp{\mathop{\text{\upshape Supp}}}
\def\Qch{\operatorname{Qch}}
\def\coh{\mathop{\text{\upshape{coh}}}}
\def\gr{\operatorname {gr}}
\def\Spec{\operatorname {Spec}}
\def\Rep{{{{\Mscr}}}}
\def\Ext{\operatorname {Ext}}
\def\Hom{\operatorname {Hom}}
\def\End{\operatorname {End}}
\def\RHom{\operatorname {RHom}}
\def\coker{\operatorname {coker}}
\def\ker{\operatorname {ker}}
\def\End{\operatorname {End}}
\def\id{{\operatorname {id}}}
\def\rk{\operatorname {rk}}
\def\r{\rightarrow}
\DeclareMathOperator{\Ind}{Ind}
\newtheorem{lemma}{Lemma}[section]
\newtheorem{proposition}[lemma]{Proposition}
\newtheorem{theorem}[lemma]{Theorem}
\newtheorem{corollary}[lemma]{Corollary}
\newtheorem{lemmas}{Lemma}[subsection]
\newtheorem{propositions}[lemmas]{Proposition}
\newtheorem{theorems}[lemmas]{Theorem}
\theoremstyle{definition}
\newtheorem{example}[lemma]{Example}
\newtheorem{examples}[lemmas]{Example}
\newtheorem{definitions}[lemmas]{Definition}
\theoremstyle{remark}
\newtheorem{remarks}[lemmas]{Remark}
\newdimen\uboxsep \uboxsep=1ex
\def\uboxn#1{\vtop to 0pt{\hrule height 0pt depth 0pt\vskip\uboxsep
\hbox to 0pt{\hss #1\hss}\vss}}
\def\uboxs#1{\vbox to 0pt{\vss\hbox to 0pt{\hss #1\hss}
\vskip\uboxsep\hrule height 0pt depth 0pt}}
\def\Az{\operatorname{Az}}
\def\HH{\mathsf{HH}}
\def\ess{\operatorname{ess}}
\def\Ob{\operatorname{Ob}}
\def\trdeg{\operatorname{trdeg}}
\def\Perf{\operatorname{Perf}}
\def\Qcoh{\operatorname{Qcoh}}
\def\Fp{\operatorname{Fp}}
\def\BB{\mathbb{B}}
\let\mathscr\mathcal
\title[Scalar extensions of derived categories]{Scalar extensions of derived categories and non-Fourier-Mukai functors}
\author{Alice Rizzardo}
\email[Alice Rizzardo]{arizzardo@sissa.it}
\address{SISSA\\ Via Bonomea, 265\\34136 Trieste\\Italy}
\author{Michel Van den Bergh}
\email[Michel Van den Bergh]{michel.vandenbergh@uhasselt.be}
\address{Universiteit Hasselt\\ Universitaire Campus\\ 3590 Diepenbeek\\Belgium}
\thanks{The second author is a senior researcher at the FWO}
\thanks{This research started at the Mathematical Sciences Research
Institute in 2013 with the support of the National Science Foundation under Grant
No.\ 0932078\,000. A number of results  were obtained during a research visit of the first author to the University of Hasselt under the
ESF Exchange Grant 4498  in the framework of the project ``Interactions of Low-Dimensional Topology and Geometry with Mathematical Physics (ITGP)''. The
authors are very grateful to the NSF, the MSRI, the ESF and the steering committee of the ITGP project for their support.}
\keywords{Fourier-Mukai functor, Orlov's theorem}
\subjclass{13D09, 18E30, 14A22}
\begin{document}
\begin{abstract}
Orlov's famous representability theorem asserts that any fully faithful exact functor between the bounded derived categories
of coherent sheaves on smooth projective varieties is a Fourier-Mukai functor. This result has been
extended by Lunts and Orlov to include functors from perfect complexes to quasi-coherent complexes.
In this paper we show that the latter extension is false without the full faithfulness hypothesis.

Our results are based on the properties of scalar extensions of derived categories, whose
investigation was started by Pawel Sosna and the first author.
\end{abstract}
\maketitle
\setcounter{tocdepth}{1}
\tableofcontents
\section{Introduction}
Unless otherwise specified, $k$ is an algebraically closed base field
of characteristic zero.  Orlov's famous representability
theorem~\cite[Thm 2.2]{Orlov4} asserts that any fully faithful exact functor
between the bounded derived categories of coherent sheaves on smooth
projective varieties over $k$ is a Fourier-Mukai functor. It is still unknown
if the full faithfulness hypothesis is necessary in this theorem,
although some positive results were obtained by the first author in
\cite{Rizzardo1}.

A number of extensions and variants of Orlov's theorem are known. See e.g.\ \cite{Ballard,COV,CS4,CS3,CS2,Kawamata2,OrlovLunts}. For an excellent survey on the current state of knowledge see \cite{CS1}.
In particular, Lunts and Orlov proved the following natural extension of Orlov's theorem to quasi-coherent sheaves:
{\def\thelemma{A}
\begin{proposition} \cite[Corollary 9.13(2)]{OrlovLunts}
\label{ref-A-0}
Let $X/k$ be a projective scheme such that~$\Oscr_X$ has no zero dimensional torsion
 and let $Y$ be
 a quasi-compact separated scheme. Then every fully faithful exact functor $\Psi :\Perf (X)\r
  D(\Qcoh(Y))$ is isomorphic to the restriction of a Fourier-Mukai functor associated to an
object in $D(\Qcoh(X\times Y))$.
\end{proposition}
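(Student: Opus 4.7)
The plan is to approach the problem via DG-enhancements. Fix a DG-enhancement $\mathcal{A}$ of $\Perf(X)$ (for instance the DG-category of perfect h-injective complexes of $\mathcal{O}_X$-modules) and a DG-enhancement $\mathcal{B}$ of $D(\Qcoh(Y))$. The strategy has three steps: first, lift the triangulated functor $\Psi$ to a quasi-functor $\tilde\Psi : \mathcal{A} \to \mathcal{B}$; second, reinterpret $\tilde\Psi$ as an object $\mathcal{E} \in D(\Qcoh(X \times Y))$; third, verify that the Fourier--Mukai functor with kernel $\mathcal{E}$ agrees with $\Psi$ on $\Perf(X)$.

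The first step is the technical heart of the argument. The hypothesis that $X$ is projective with $\mathcal{O}_X$ having no zero-dimensional torsion is precisely what guarantees, by the main results of \cite{OrlovLunts}, that $\Perf(X)$ has a unique DG-enhancement up to quasi-equivalence. The full faithfulness of $\Psi$ implies that its essential image in $D(\Qcoh(Y))$ carries a canonical enhancement inherited from $\mathcal{B}$, and by uniqueness this enhancement must be quasi-equivalent to $\mathcal{A}$. Assembling these equivalences produces the desired quasi-functor $\tilde\Psi$. Full faithfulness is indispensable here, and it is precisely the failure of this lifting step without it that the rest of the paper will exploit in order to construct non-representable functors.

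For the second step, a quasi-functor $\tilde\Psi$ is equivalent data to an $\mathcal{A}$-$\mathcal{B}$-DG-bimodule, i.e.\ an object of $D(\mathcal{A}^{\mathrm{op}} \Lotimes_k \mathcal{B})$. Using that $\Perf(X)$ admits a classical generator arising from an ample sequence $\bigoplus_{i=0}^{N} \mathcal{O}_X(i)$, and that external tensor product gives a fully faithful embedding of derived categories (thanks to $Y$ being quasi-compact separated), standard Morita-theoretic arguments convert this bimodule into the sought kernel $\mathcal{E} \in D(\Qcoh(X \times Y))$. The third step is then routine: by triangulation and idempotent completion it suffices to compare $\Phi_\mathcal{E}$ and $\Psi$ on the generators $\mathcal{O}_X(i)$, where they agree by construction. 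The principal obstacle is thus the lifting step of stage one, which simultaneously leans on the projectivity and torsion hypotheses on $X$ and on the full faithfulness of $\Psi$.
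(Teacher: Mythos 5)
The paper does not give a proof of Proposition~A; it quotes it directly as \cite[Corollary~9.13(2)]{OrlovLunts}, so there is no internal argument against which to compare your sketch. What you have written is a broadly accurate outline of the Lunts--Orlov strategy: lift $\Psi$ to a quasi-functor between DG-enhancements using uniqueness of enhancement for $\Perf(X)$ (which is what the projectivity and no-zero-dimensional-torsion hypotheses supply), reinterpret that quasi-functor as a DG-bimodule, and produce a kernel on $X\times Y$ via the ample generator $\bigoplus_i\mathcal{O}_X(i)$. One caveat: the step ``the essential image of $\Psi$ carries an enhancement quasi-equivalent to $\mathcal{A}$, hence we get a quasi-functor lifting $\Psi$'' hides the subtlest part of the Lunts--Orlov argument. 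Abstract uniqueness of enhancement is not by itself enough; what is needed is a quasi-equivalence compatible with the given identifications of the homotopy categories, and this is exactly what their \emph{strong} uniqueness theorems provide. Your observation that full faithfulness is the leverage point that the present paper goes on to attack is well taken and matches the paper's framing.
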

} 
One of the main results of this paper is that this extension
is false if we drop the condition that $\Psi$ is fully
faithful, even in the case that $X$, $Y$ are smooth and projective (see Theorem \ref{ref-8.1-43} below).
Our arguments are based on the properties of scalar extensions of
derived categories, which we will outline below.  We will get back to
Proposition \ref{ref-A-0} at the end of the introduction.

\medskip

 If $\mathfrak{a}$ is
a $k$-linear category and $B$ is a $k$-algebra, we denote by $\mathfrak{a}_B$ the category of $B$-objects
in $\mathfrak{a}$, i.e.\ pairs $(M,\rho)$ where $M\in \Ob(\mathfrak{a})$ and $\rho:B\r \mathfrak{a}(M,M)$
is a~$k$-algebra morphism. 
If $\Cscr$ is abelian then so is $\Cscr_B$, but if $\Tscr$ is triangulated
there is no reason for this to be the case for $\Tscr_B$ as well.

While investigating generalizations of Orlov's theorem \cite{Rizzardo}
the first
author studied the obvious forgetful functor
\[
F:D^b(\Cscr_B)\r D^b(\Cscr)_B
\]
for $B/k=L/k$ a field extension. She proved an essential surjectivity result for $\trdeg L/k\le 2$ but it appeared difficult
to go beyond that. Indeed, in the present paper we will show that $F$ is generally \emph{not} essentially surjective when $\trdeg L/k=3$.
To put this in context, we start with a positive result which is naturally proved using $A_\infty$-techniques:
{
\def\thelemma{B}
\begin{proposition} (See Propositions \ref{ref-9.1.1-48},\ref{ref-9.1.2-49},\ref{ref-9.1.3-50} below.)  Assume that $\Cscr$ is a Grothendieck category. 
\label{ref-B-1}
\begin{itemize}
\item If $B/k$ has Hochschild dimension $\le 2$, $F$ is essentially surjective.
\item If $B/k$ has Hochschild dimension $\le 1$, $F$ is in addition full.
\item If $B/k$ has Hochschild dimension $0$, $F$ is an equivalence
of categories. 
\end{itemize}
\end{proposition}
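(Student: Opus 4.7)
The plan is to realize all three statements via a single $A_\infty$-style obstruction theory for $B$-actions at the chain level. Given $(M^\bullet,\rho)\in D^b(\Cscr)_B$, I would first resolve $M^\bullet$ by an h-injective complex $I^\bullet$ (available because $\Cscr$ is Grothendieck) and form the DG endomorphism algebra $A^\bullet=\uHom^\bullet_\Cscr(I^\bullet,I^\bullet)$, so that $H^0(A)=\End_{D(\Cscr)}(M^\bullet)$ and $\rho$ becomes a $k$-algebra map $B\to H^0(A)$. Producing a preimage of $(M^\bullet,\rho)$ under $F$ is equivalent to upgrading $\rho$ to an $A_\infty$-morphism $\Phi:B\to A$ (with $B$ viewed as a trivial $A_\infty$-algebra): any such $\Phi$ endows $I^\bullet$ with an $A_\infty$-$B$-module structure which, after standard rectification, produces a quasi-isomorphic bounded complex carrying a genuine $B$-action, i.e.\ an object of $C^b(\Cscr_B)$.

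I would build $\Phi$ one Taylor coefficient at a time: pick any $k$-linear lift $\Phi_1:B\to A^0$ of $\rho$, and for $n\ge 2$ solve the $A_\infty$-relation $d_A\Phi_n=R_n(\Phi_1,\ldots,\Phi_{n-1})$, where $R_n$ is the universal expression in the lower Taylor components and the multiplication of $B$. A formal check shows $d_A R_n=0$ and that $R_n$ defines a Hochschild $n$-cocycle with values in $H^{2-n}(A)$, equipped with the $B$-bimodule structure transported from $\rho$. Over a field all short exact sequences of vector spaces split, so $\Phi_n$ can be chosen $k$-multilinearly as soon as the class $[R_n]$ in $\HH^n(B,H^{2-n}(A))$ vanishes. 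At $n=2$ this class is automatically trivial because $\rho$ is already an algebra map on $H^0$; at $n\ge 3$ it lies in $\HH^n(B,-)$, which vanishes under the hypothesis of Hochschild dimension $\le 2$. This yields essential surjectivity.

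For fullness under Hochschild dimension $\le 1$ I would run the same machinery on morphisms. Given objects $(N_i^\bullet,\rho_i)\in D^b(\Cscr_B)$ ($i=1,2$) together with a morphism $\phi$ between their images in $D^b(\Cscr)_B$, lifting $\phi$ to a morphism of $A_\infty$-$B$-modules is obstructed by Hochschild cohomology classes in degrees $n\ge 2$, all of which vanish by hypothesis. For the equivalence when the Hochschild dimension is $0$ (so $B$ is separable, i.e.\ étale in characteristic zero), faithfulness follows from the analogous vanishing in degrees $n\ge 1$: a $B$-equivariant chain map $N_1^\bullet\to N_2^\bullet$ that is null-homotopic in $\Cscr$ admits a $B$-equivariant null-homotopy precisely when the obstructions in $\HH^{\ge 1}(B,-)$ vanish.

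The principal obstacle is the bookkeeping: identifying each $R_n$ with an honest Hochschild cocycle in the asserted degree and bimodule structure, and using the freedom in the inductive choice of the $\Phi_k$ (up to $A_\infty$-homotopy) carefully enough that no obstruction slips into a too-low Hochschild degree. A secondary, largely mechanical, step is the rectification statement that turns the resulting $A_\infty$-$B$-module structure on $I^\bullet$ into an honest $B$-action on a quasi-isomorphic bounded complex in $\Cscr$, thereby supplying the desired preimage in $D^b(\Cscr_B)$.
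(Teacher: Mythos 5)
Your proposal follows essentially the same route as the paper's Section~9: lift the graded algebra map $H^\ast(B)\to\Ext^\ast_\Cscr(M,M)$ to an $A_\infty$-morphism $B\to\underline{\Hom}_\Cscr(I,I)$ by building Taylor coefficients inductively, with the obstruction to defining $\Phi_n$ (after exploiting the freedom to adjust $\Phi_{n-1}$ by a $[b_1,-]$-closed correction, which is exactly what lets the obstruction live in $\HH^n$ rather than merely being a cocycle) sitting in $\HH^n(B,\Ext^{2-n}_\Cscr(M,M))$, and then rectify the resulting homologically unital $A_\infty$-$B$-module via the $A\otimes^\infty_A-$ construction of Lef\`evre-Hasegawa. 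This is precisely the content of Lemma~\ref{ref-9.3.1-56}, Lemma~\ref{ref-9.2.1-52}, and Propositions~\ref{ref-9.1.1-48}--\ref{ref-9.1.3-50}; the same inductive obstruction calculus on $A_\infty$-morphisms of modules and on homotopies gives fullness and faithfulness, as you sketch.
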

}
Recall that, for a finitely generated field extension $L/k$, the Hochschild dimension is equal to the transcendence degree.
Proposition \ref{ref-B-1} represents a strengthening of the results in \cite{Keller21}.
The case of Hochschild dimension $0$ generalises results by Sosna \cite{Sosna}.

However, our next result shows that one cannot hope to substantially improve Proposition \ref{ref-B-1}:
{\def\thelemma{C}
\begin{theorem} (See Theorem \ref{ref-7.1-41} below.) 
\label{ref-C-2}
Let $X/k$ be a smooth connected projective variety which is not a point, a projective line or an elliptic curve. Then there exists a finitely generated field extension $L/k$
  of transcendence degree $3$ together with an object $Z\in D^b(\Qcoh(X))_L$ which is not
in the essential image of $F$.
\end{theorem}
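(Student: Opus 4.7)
The plan is to reduce non-liftability of $(M,\rho)$ from $D^b(\Qcoh(X))_L$ to $D^b(\Qcoh(X)_L)$ to the nonvanishing of an explicit obstruction class in $\HH^3$ of $L/k$. Fixing any dg enhancement of $D^b(\Qcoh(X))$, an object $(M,\rho)\in D^b(\Qcoh(X))_L$ lies in the essential image of $F$ precisely when $\rho\colon L\to H^0\mathbf R\End(M)$ admits an $A_\infty$-refinement to a strict morphism from $L$ (viewed as a dg $k$-algebra concentrated in degree $0$) into a dg $k$-algebra model of $\mathbf R\End(M)$. The obstructions to such a refinement are organized by $\HH^\ast(L,\Ext^\ast(M,M))$, and by Proposition \ref{ref-B-1} they all vanish when $L/k$ has Hochschild dimension $\le 2$; the strategy is therefore to exhibit a genuinely nonzero obstruction in $\HH^3$ for a transcendence-degree-$3$ field extension.

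I would take $L=k(t_1,t_2,t_3)$ so that $\HH^3(L/k,L)$ is the free rank-one $L$-module generated by the volume form $dt_1\wedge dt_2\wedge dt_3$, providing a canonical receptacle for obstruction classes. The role of the geometric hypothesis---that $X$ is neither a point, $\PP^1$, nor an elliptic curve---is to guarantee the existence of a building block $M_0\in D^b(\Qcoh(X))$ carrying nontrivial derived-categorical complexity: in each of the three excluded cases every object admits a formal (hence rigid) dg enhancement, which forces $F$ to be essentially surjective for trivial reasons. Outside those cases one can always find an $M_0$---built, for instance, from skyscraper sheaves at smooth points when $\dim X\ge 2$, from nonsplit extensions on higher-genus curves, or from suitable families of line bundles on surfaces---whose $\mathbf R\End$-algebra carries honest homotopy-theoretic content.

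The central step is to promote $M_0$ to a pair $(M,\rho)$ whose first nonvanishing $A_\infty$-obstruction pairs nontrivially with $dt_1\wedge dt_2\wedge dt_3$. Concretely, I would take $M$ to be $M_0$ ``extended by scalars to $L$'', for instance an infinite direct sum $\bigoplus_{i\in I}M_0[n_i]$ indexed so that $\End_{D^b(\Qcoh(X))}(M)$ contains an embedded copy of $L$ on which $\rho$ acts by multiplication; the nontrivial higher operations of $\mathbf R\End(M_0)$ then propagate to a cocycle on the bar complex of $L/k$ of degree $3$, representing the desired class in $\HH^3(L,\Ext^{\ge 1}(M,M))$. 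Its nontriviality is checked by evaluating against $dt_1\wedge dt_2\wedge dt_3$, which by construction picks out precisely the higher operation coming from $\mathbf R\End(M_0)$.

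The hard part is twofold. First, one must produce an $M_0$ whose dg enhancement is reliably nonformal in the remaining geometric cases; this requires separate analysis for higher-genus curves---where the $\Ext$-algebra is concentrated in degrees $0$ and $1$ so that formality is delicate to rule out directly---as opposed to higher-dimensional $X$, where richer $\Ext$-groups make nonformality considerably easier to arrange. Second, one must show that the cocycle one writes down is intrinsic to $(M,\rho)$---that it cannot be killed by a change of representatives or by a compensating redefinition of the hypothetical $A_\infty$-lift. This rigidity is the genuine technical heart of the argument and is exactly where the Hochschild cohomology formalism for $L/k$ becomes indispensable.
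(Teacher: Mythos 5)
Your high-level framing is in the right spirit — the paper does indeed reduce non-liftability to the nonvanishing of a class in $\HH^3(L,-)$, and Proposition~\ref{ref-B-1} does show the obstructions vanish when $L/k$ has Hochschild dimension $\leq 2$. But the concrete construction in your proposal does not carry through, and it diverges substantially from what actually makes the argument work.

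The central gap is the construction of the object. The paper takes $Z$ to be a \emph{two-term complex} $U\oplus sV$ with $U=V\in\coh(X_L)$ a single coherent sheaf, not an infinite direct sum $\bigoplus_{i\in I}M_0[n_i]$. With $Z=U\oplus sV$, the lower-triangular structure of $\End_{\Cscr}(Z)$ forces every $L$-action to decompose as diagonal actions on $U,V$ plus an off-diagonal derivation $\phi_{21}:L\to\Ext^1_\Cscr(U,V)$, and Lemma~\ref{ref-6.2-35} gives an exact criterion: $(Z,\phi)$ lifts if and only if $\phi_{21}$ is a Hochschild coboundary. The hereditary identity \eqref{ref-4.3-24} then converts $\HH^1(L,\Ext^1_\Cscr(U,V))$ into $\HH^3(L,\Hom_\Cscr(U,V))$, which is where the transcendence degree $3$ enters via HKR. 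Your proposed infinite direct sum would destroy both the finiteness of the cohomology and the clean two-step filtration that makes $\HH^1$ of an $\Ext^1$ the relevant group; its endomorphism ring over $\Cscr$ would be far larger than $L$, wrecking the HKR computation. Moreover, the essential technical input that you omit entirely is how to produce $U$ with $\End_{\Cscr}(U)=L$ where endomorphisms are taken over $\Cscr=\Qcoh(X)$ (or $\Mod(kQ)$), \emph{not} over the base-extended category $\Cscr_L$. This is the content of Propositions~\ref{ref-2.5.2-11} and~\ref{ref-3.1-19}: $U$ is the fiber of a universal family over a fine moduli space (of stable quiver representations or stable vector bundles) at the generic point of a $3$-dimensional subvariety, and the equality $\End_A(U)=k(x)$ is proved via the Formanek center and the split Azumaya algebra on that moduli space. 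Without this, you have no object with the required endomorphism ring and no nonzero class to pair against your volume form.

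Two further points. First, the geometric case reduction is also different from what you sketch: for $\dim X\geq 2$ the paper does not work with skyscraper sheaves or line-bundle families directly, but uses the partial tilting object $\Oscr_X\oplus\Oscr_X(1)$ on $\PP^d$ to reduce to a wild Kronecker quiver, then a finite flat cover $X\to\PP^d$ plus the trace-map splitting for general $X$. Second, your explanation for why $\PP^1$ and elliptic curves are excluded (``every object admits a formal, hence rigid, dg enhancement'') is not the operative reason: the correct invariant is the \emph{essential dimension} $\leq 1$ of $\coh(X)$ (Remark~\ref{ref-5.2.2-31}), i.e.\ indecomposables form at most one-parameter families, which makes the $\HH^{\geq 3}$ obstructions vanish by Theorem~\ref{ref-5.2.3-32}; it has nothing to do with formality of $\mathbf{R}\End$-algebras.
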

}
The proof
of this theorem will depend on a similar  result for representations of wild quivers, which we will prove first (see Proposition \ref{ref-6.1-34} below). 

\medskip

As the reader may notice, Theorem \ref{ref-C-2} leaves out the case where $X$ is
a curve of genus $\le 1$.  
The key point is that in this case the 
moduli space of indecomposable objects has dimension  $\le 1$. We capture this in the concept of ``essential dimension'', which is
roughly speaking the minimal number of parameters required to define any family of indecomposable objects (see
Definition \ref{ref-5.2.1-30} below for a precise definition). 
From this theory it follows that if $X$ is a curve of genus $\le 1$ and $\Cscr=\Qcoh(X)$, for any field extension $L/k$ the essential image 
of 
$F$ contains all objects in $D^b(\Qcoh(X))_L$ whose cohomology lies in $\coh(X_L)\subset \Qcoh(X_L)\cong \Qcoh(X)_L$
(see Remark \ref{ref-5.2.2-31} and Theorem \ref{ref-5.2.3-32} below).
 We suspect that $F$ is in fact essentially surjective, but we have not proved it. 

\medskip

Now we come back to Proposition \ref{ref-A-0}. A counterexample to this proposition,
when dropping the full faithfulness hypothesis, may be obtained using the following result:
{\def\thelemma{D}
\begin{theorem} (See Theorem \ref{ref-8.1-43} below)
Let $X$, $Y$ be connected smooth
projective schemes. Let $i_\eta:\eta\r X$ be the generic point of $X$ and let
$L=k(\eta)$ be the function field of~$X$. Assume that
$D^b(\Qcoh(Y))_L$ contains an object $Z$ which is not in the essential
image of $D^b(\Qcoh(Y)_L)$ (for example as in Theorem \ref{ref-C-2}). Define  $\Psi$ as the 
composition
\[
\xymatrix{
& \Perf(X) \ar[r]^-{i^*_{\eta}} &  D(L) \ar[r]^-{L\mapsto Z} & D(\Qcoh(Y))
}.
\]
Then $\Psi$ is not the restriction of a Fourier-Mukai functor.
\end{theorem}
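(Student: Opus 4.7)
The plan is to argue by contradiction: suppose $\Psi \cong \Phi_E\big|_{\Perf(X)}$ for some kernel $E \in D(\Qcoh(X\times Y))$, and construct an object $E_\eta \in D^b(\Qcoh(Y_L))$ whose image under the forgetful functor $D^b(\Qcoh(Y_L)) \to D^b(\Qcoh(Y))_L$ is $Z$, contradicting the hypothesis on $Z$.

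The natural candidate is $E_\eta := L(i_\eta \times 1_Y)^* E$. Let $L_X := i_{\eta,*}\Oscr_\eta \in \Qcoh(X)$ be the function-field sheaf and write $\pi := p_2 \circ (i_\eta \times 1_Y) : Y_L \to Y$, which is affine and faithfully flat. Extending $\Phi_E$ to $D(\Qcoh(X))$ by the same Fourier-Mukai formula, the identification $p_1^* L_X \cong (i_\eta \times 1_Y)_* \Oscr_{Y_L}$ (flat base change along $i_\eta$) together with the projection formula gives
\[
\Phi_E(L_X) \;\cong\; R p_{2,*}\bigl((i_\eta \times 1_Y)_* E_\eta\bigr) \;=\; \pi_* E_\eta .
\]
Next I claim that the morphism $\Phi_E(\Oscr_X) \to \Phi_E(L_X)$ induced by $\Oscr_X \hookrightarrow L_X$ is an isomorphism. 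Its cone is $\Phi_E(L_X/\Oscr_X)$, and $L_X/\Oscr_X$ is a filtered colimit of its coherent subsheaves $G_i$, each of which has vanishing generic stalk (since $(L_X/\Oscr_X)_\eta = 0$). Because $X$ is smooth, every such $G_i$ is perfect, so $\Phi_E(G_i) \cong \Psi(G_i) = i_\eta^* G_i \otimes_L Z = 0$; by cocontinuity of $\Phi_E$, also $\Phi_E(L_X/\Oscr_X) = 0$. Therefore $Z = \Psi(\Oscr_X) \cong \pi_* E_\eta$ in $D(\Qcoh(Y))$.

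To upgrade this to an isomorphism in $D(\Qcoh(Y))_L$, I verify $L$-linearity as follows. For $l \in L$, pick an effective divisor $D$ with $l \in \Gamma(X, \Oscr_X(D))$; multiplication by $l$ defines a morphism $\mu_l : \Oscr_X \to \Oscr_X(D)$ in $\Perf(X)$. The two compositions $\Oscr_X \xrightarrow{\mu_l} \Oscr_X(D) \hookrightarrow L_X$ and $\Oscr_X \hookrightarrow L_X \xrightarrow{l\,\cdot} L_X$ in $D(\Qcoh(X))$ coincide (both send a local section $g$ to $lg$). Applying $\Phi_E$ and using naturality of $\Psi \cong \Phi_E$ on $\Perf(X)$ (which identifies $\Phi_E(\mu_l)$, under the canonical identifications $\Psi(\Oscr_X(D)) \cong Z$ coming from $i_\eta^* \Oscr_X(D) = L$, with multiplication by $l$ on $Z$) produces a commutative diagram showing that the isomorphism $\beta : Z \xrightarrow{\sim} \pi_* E_\eta$ intertwines the given $L$-action on $Z$ with the natural $L$-action on $\pi_* E_\eta$ coming from the $Y_L$-structure of $E_\eta$. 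Finally, since $\pi$ is affine and faithfully flat, $\pi_*$ is exact and faithful, so $\pi_* E_\eta \cong Z \in D^b$ forces $E_\eta \in D^b(\Qcoh(Y_L))$. Thus $Z$ lies in the essential image of the forgetful functor, a contradiction.

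The main obstacle I anticipate is the $L$-linearity verification: one has to track simultaneously the naturality of $\Psi \cong \Phi_E$ on $\Perf(X)$, the canonical identifications $\Psi(\Oscr_X(D)) \cong Z$, and the computation of $\Phi_E$ applied to multiplication by $l$ on the \emph{non-perfect} object $L_X$, as $l$ ranges over $L = \colim_D \Gamma(X, \Oscr_X(D))$; the earlier paragraph's cocontinuity/smoothness argument has to be applied consistently also to $\Oscr_X(D)/\Oscr_X$ and to $L_X/\Oscr_X(D)$ so that the relevant auxiliary maps $\Phi_E(\Oscr_X) \to \Phi_E(\Oscr_X(D))$ and $\Phi_E(\Oscr_X(D)) \to \Phi_E(L_X)$ are genuine isomorphisms compatible with the composition.
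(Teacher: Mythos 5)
Your proof is correct and follows essentially the same strategy as the paper, which organizes the argument into two lemmas: Lemma \ref{ref-8.2-44} shows that any cocontinuous exact $\Phi:D(\Qcoh(X))\to D(\Qcoh(Y))$ extending $\Psi$ factors (uniquely) through $i_\eta^*$ and hence coincides with $\widetilde\Psi$, and Lemma \ref{ref-8.3-46} identifies $\Phi_V\circ i_{\eta\ast}$ with $\Phi_{V_\eta}$ for $V_\eta=(i_\eta\times\id)^*V$ using flat base change and the projection formula. You merge the two, working directly with the object $\Phi_E(L_X)$ rather than factoring the entire functor; your filtered-colimit argument that $\Phi_E(L_X/\Oscr_X)=0$ plays the role of the paper's Neeman-style ``kernel of $i_\eta^*$ is compactly generated'' argument (either version is fine -- yours uses smoothness of $X$ to make all coherent subsheaves perfect, the paper's only needs the two-term complexes $R\xrightarrow{s}R$); and your divisor-twist verification of $L$-equivariance is the same device as the paper's factorization $f=i_\eta^*(g)\circ i_\eta^*(h)^{-1}$ with $g,h:\Oscr_X(-nE)\to\Oscr_X$. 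The one point you flag as a worry -- that the canonical identifications $\Psi(\Oscr_X(D))\cong Z$ are mutually compatible under the inclusions $\Oscr_X\hookrightarrow\Oscr_X(D)\hookrightarrow L_X$ -- does hold, precisely because the natural inclusion $\Oscr_X\hookrightarrow\Oscr_X(D)$ becomes the identity on $L$ after applying $i_\eta^*$, so naturality of $\Psi\cong\Phi_E$ on $\Perf(X)$ forces the various composites $Z\to\Phi_E(L_X)$ to agree; this is exactly the content of the paper's diagram \eqref{ref-8.1-45}.
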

}

\bigskip

We start with a few technical sections that will provide tools for the proofs of the main results. The
impatient reader may wish to  proceed directly to \S\ref{ref-5-26} at first reading.
\section{Acknowledgements}
The authors thank Greg Stevenson and Adam-Christiaan Van Roosmalen for
a number of interesting discussions which reinforced our interest in the
problem.

The first author would like to thank the University of Hasselt for its hospitality during her visit to Belgium. She would also like to thank MSRI for providing a welcoming environment during the Noncommutative Geometry and Representation Theory semester, which encouraged many fruitful conversations, some of which got this project started. Finally, she would like to thank her advisor, Johan de Jong, for suggesting she look into scalar extensions of derived categories.

\section{Moduli spaces of representations of algebras}
Moduli space of representations for algebras 
may be constructed in several different ways \cite{King,Procesi2}.
We remind the reader of a construction which is based on the properties
of the Formanek center and which will be used in the proof Lemma \ref{ref-2.2.1-3} 
and Proposition \ref{ref-2.5.2-11}, which are the main results
of this section. We will use our standing characteristic zero hypothesis to simplify the discussion.
\subsection{The representation functor}
For a reference on this subject, the reader can consult \cite{Procesi2}. An Azumaya algebra is a matrix algebra for the \'etale topology. Let $A$ be a $k$-algebra and let $n>0$.  Consider the functor $\Az_{n,A}$
from commutative $k$-algebras to the category of sets defined as follows: if $R$
is a commutative $k$-algebra, $\Az_{n,A}(R)$ is the set of equivalence classes
of maps of $k$-algebras $\rho:A\r B$, where $B$ is an Azumaya algebra of rank $n^2$
over $R$ satisfying $\rho(A)R=B$. Two  maps $\rho:A\r B$,
$\rho':A\r B'$ are considered equivalent if there exists an isomorphism
of $R$-algebras $\xi:B\r B'$ such that $\xi\rho=\xi'$. The functor $\Az_{n,A}$ is
a sheaf for the Zariski topology, and hence extends canonically to a functor
from $k$-schemes to the category of sets.

The functor $\Az_{n,A}$ is representable in the category of $k$-schemes
(see \cite[Ch IV, Thm 1.8 and Ch VIII, Thm 2.2]{Procesi2}). The representing scheme may be constructed as a  ``Formanek
center'', constructed as follows. Let $\Lambda$ be a $k$-algebra. The identities of $n\times n$ matrices with coefficients in $\Lambda$ are defined as
$$I=\{f(x_s)\in \Lambda\{X_s\} | f(r_s)=0 \text{ for all }r_s \in (R)_n\},$$
where $(R)_n$ denotes the algebra of $n\times n$ matrices with coefficients in $R$, and $R$ is a generic commutative algebra in the variety of commutative algebras (see \cite[Definition 4.3]{Procesi2}). If a ring $C$ satisfies the identities of $n\times n$ matrices, the Formanek center $F(C)$ of $C$ is defined as the subring of $C$ obtained by evaluating all central polynomials of $n\times n$-matrices without constant term. A central polynomial for $n\times n$ matrices is a polynomial in non-commuting variables that is non-constant, but yields a scalar matrix whenever it is evaluated at $n\times n$ matrices.

By definition, $F(C)\subset Z(C)$. Since the field is of characteristic zero, the usual polarization argument \cite{Procesi3} shows that we may compute the Formanek center by evaluating central polynomials which are homogeneous of degree one in every variable. From this it easily follows that it is an ideal.\footnote{We do not know if this is true in finite characteristic.} An algebra satisfying the identities 
of $n\times n$-matrices is Azumaya of rank $n^2$ over its center if and only if the Formanek center is equal to the ordinary center
(this follows easily from \cite[Ch VIII, Th 2.1(6)]{Procesi2}, using the characteristic zero hypothesis again).

Let ${A}_n$
be the quotient of $A$ by the identities of $n\times n$-matrices, and let $F_{n,A}=F(A_n)$ be the Formanek center
of $A_n$. Put $F^e_{n,A}=F^e(A_n)\overset{\text{def}}{=}k+F(A_n)\subset A_n$ and
$\tilde{U}_{n,A}=\Spec F^e_{n,A}$. Let $\tilde{\Ascr}_n$ be the 
sheaf of algebras on $\tilde{U}_{n,A}$ associated to~${A}_n$ and put $U_{n,A}\overset{\text{def}}{=}
\tilde{U}_{n,A}-V(F_{n,A})$. Finally, let $\Ascr_n$ be the restriction of $\tilde{\Ascr}_n$
to $U_{n,A}$. Then $\Ascr_n$ is a sheaf of Azumaya algebra of rank $n^2$ with center equal to $\Oscr_{U_{n,A}}$ (presumably this depends 
on the characteristic zero hypothesis, see \cite[Ch VIII, Cor.\ 2.3]{Procesi2} for a result valid in any characteristic). Note that $U_{n,A}$ has an affine covering by
schemes of the form
$U_{n,A,f}=\Spec (F^e_{n,A})_f$, where
$f$ runs through the elements of $F_{n,A}$. The global sections of $\Ascr_n$
restricted to $U_{n,A,f}$ are given by $(A_{n})_f$.

\medskip

It follows from \cite[Ch VIII, Thm 2.2]{Procesi2}   that
 $U_{n,A}$  is isomorphic to a differently constructed scheme
which
represents $\Az_{n,A}$.   For further reference we give a description of the bijection
between
$U_{n,A}(R)$ and
$\Az_{n,A}(R)$ as given in the proof of \cite[Ch VIII, Thm 2.2]{Procesi2}. Assume
$\rho:A\r B$ 
represents an element of $\Az_{n,A}(R)$. The map $\rho$ descends to a map
$\rho_n:A_n\r B$, and hence to a map $\rho_{n,f}:(A_{n})_f\r B_{\rho_n(f)}$ for $f\in F_{n,A}$. We obtain
an induced map  $\rho_{n,f}:F^e((A_{n})_f)\r F^e(B_{\rho_n(f)})$.
Now $B_{\rho_n(f)}$ is an Azumaya algebra and hence $F^e(B_{\rho_n(f)})$ is equal to its center 
$R_{\rho_n(f)}$. It is easy to see that
the maps   $(F^e_{n,A})_f\r R_{\rho_n(f)}$ may be glued to a scheme map
$\rho_n:\Spec R\r \Spec F^e_{n,A}-V(F_{n,A})=U_{n,A}$.

Conversely, if we start from a scheme map $\rho_n:\Spec R\r U_{n,A}$, then we put
$B=\rho_n^\ast(\Ascr_n)$ (where here and below we usually identify quasi-coherent sheaves on affine schemes with 
their global sections).  Since the elements of $A$ restrict to sections
of $\Ascr_n$, we obtain a corresponding map $\rho:A\r B$. The required condition
$\rho(A)R=B$ is easily checked.

\subsection{The main lemma}
Let the notation be as in the previous section. 
For a not necessarily closed point $i_x:x\r U_{n,A}$  
we say that $x$ is split 
if $i^\ast_x(\Ascr_{n})$ is split as a central simple algebra, i.e. if it  is isomorphic to $M_n(k(x))$ as a $k(x)$-algebra.
In this case, $V_x$ is defined to be the corresponding irreducible $A_{k(x)}\overset{\text{def}}{=}k(x)\otimes_k A$-representation
of dimension $n$ over $k(x)$.
\begin{lemmas} \label{ref-2.2.1-3} Assume $x$ is split. We have
$
\End_A(V_x)=k(x)
$.
\end{lemmas}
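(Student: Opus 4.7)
The plan is to unwind the definitions and reduce to Schur's lemma for a matrix algebra.

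First I would make explicit how the $A$-action on $V_x$ factors. By construction $V_x$ is the representation arising from the map $A \to A_n \to i_x^\ast(\Ascr_n)$, and the split hypothesis says that we may identify $i_x^\ast(\Ascr_n)$ with $M_n(k(x))$, with $V_x \cong k(x)^n$ as the standard column module. Thus the entire $A$-action on $V_x$ factors through the algebra map $\bar\rho : A \to M_n(k(x))$. In particular, since $k \subset Z(A)$ acts on $V_x$, any $A$-linear endomorphism is automatically $k$-linear.

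Next I would observe the two-sided containment
\[
k(x) \subset \End_A(V_x) \subset \End_{M_n(k(x))}(V_x).
\]
The left inclusion is because scalar multiplication by $k(x)$ on $V_x = k(x)^n$ is realised by scalar matrices in $M_n(k(x))$, which lie in the image of $\bar\rho$ (after extending scalars to $k(x)$) or more directly commute with every element of $M_n(k(x))$, hence \emph{a fortiori} with the $A$-action. The right inclusion is precisely the statement that any endomorphism commuting with all of $A$ commutes with everything in the subalgebra $\bar\rho(A) \cdot k(x) \subset M_n(k(x))$; and this subalgebra is all of $M_n(k(x))$ because $\rho(A)R = B$ by definition of $\Az_{n,A}(R)$ in the representation functor, so at the fibre over $x$ the image generates $M_n(k(x))$ as a $k(x)$-algebra.

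Finally, Schur's lemma applied to the simple $M_n(k(x))$-module $k(x)^n$ gives $\End_{M_n(k(x))}(V_x) = k(x)$, collapsing the chain of inclusions to an equality and proving the lemma.

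The only subtlety — and the only point where one must actually use the split hypothesis — is in identifying $i_x^\ast(\Ascr_n)$ with $M_n(k(x))$ and checking that the image of $A$ under $\bar\rho$ together with $k(x)$ generates $M_n(k(x))$; both are immediate from the way $U_{n,A}(R) \leftrightarrow \Az_{n,A}(R)$ was described at the end of Section~2.1, so I do not expect a substantial obstacle.
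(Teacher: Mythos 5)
The middle link in your chain of inclusions,
\[
k(x)\subset \End_A(V_x)\subset \End_{M_n(k(x))}(V_x),
\]
is exactly where the content of the lemma lives, and your justification of it begs the question. An element $\phi\in\End_A(V_x)$ is only assumed to be a \emph{$k$-linear} map commuting with $\bar\rho(A)$. In order to conclude that $\phi$ commutes with the full matrix algebra $M_n(k(x))=\bar\rho(A)\cdot k(x)$, you need $\phi$ to commute with the $k(x)$-scalars as well — but that is not given, and it is not implied by $\rho(A)R=B$. Having $k(x)\subset\End_A(V_x)$ does not force $\End_A(V_x)$ to \emph{centralize} $k(x)$: a priori $\End_A(V_x)$ could be a noncommutative $k$-algebra properly containing $k(x)$. (Indeed, if one carelessly set this up for an arbitrary $R$-point this would simply be false; the scheme $U_{n,A}$ is carved out by the Formanek center precisely to prevent it.) So the direction of the right inclusion is not established and, before it is, it is not even clear which way it should point.

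The paper's proof supplies exactly the missing ingredient. The key structural fact is that $F^e_{n,A}=k+F(A_n)$ sits \emph{inside} $A_n$, so the image $O(x)$ of $F^e_{n,A}$ in $k(x)$ consists of scalars that are themselves of the form $\bar\rho(a)$ for $a\in A$. Hence any $\phi\in\End_A(V_x)$ is automatically $O(x)$-linear, and a localization/epimorphism argument ($O(x)\otimes_{F^e_{n,A}}A_n\to k(x)\otimes_{F^e_{n,A}}A_n$ is a ring epimorphism, and $V_x$ is a module over the target) promotes this to $k(x)$-linearity. This is what the chain $\End_A=\End_{A_n}=\End_{O(x)\otimes A_n}=\End_{k(x)\otimes A_n}=\End_{A_{k(x)}}$ accomplishes, after which Schur's lemma finishes matters as you say. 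Your use of Schur and the left inclusion are both fine; what is missing is the passage from $A$-linearity to $k(x)$-linearity, which is the whole point of the lemma and requires the Formanek center structure rather than the mere identity $\rho(A)R=B$.
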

The point of the lemma is that the endomorphisms are only assumed to be $A$-linear, not $A_{k(x)}$-linear.
\begin{proof} Let $O(x)$ be the image of $F^e_{n,A}$ in $k(x)$. Then $k(x)$ is the field
of fractions of $O(x)$ and we have 
\begin{align*}
\End_A(V_x)&=\End_{{A}_n}(V_x)\\
&=\End_{O(x)\otimes_{F^e_{n,A}}{A}_n}(V_x)\\
&=\End_{k(x)\otimes_{F^e_{n,A}}{A}_n}(V_x)\\
&=\End_{A_{k(x)}}(V_x)\\
&=k(x).
\end{align*}
In the second equality we use that ${A}_n\r O(x)\otimes_{F^e_{n,A}}{A}_n$ is surjective.
In the third equality we use that $O(x)\otimes_{F^e_{n,A}}{A}_n\r k(x)\otimes_{F^e_{n,A}}{A}_n$
is an epimorphism of rings and the fact that $V_x$ is a $k(x)\otimes_{F^e_{n,A}}{A}_n$-module. For the
fourth equality we use that $A_{k(x)}\r k(x)\otimes_{F^e_{n,A}}{A}_n$ is surjective.
\end{proof}
\begin{examples} \label{ref-2.2.2-4} Here is an example where one can check the conclusion of Lemma \ref{ref-2.2.1-3} directly.
Let $Q$ be the quiver with one vertex and three loops and $A=kQ=k\langle X,Y,Z\rangle$. Then it is easy to see
that $U_{1,A}\cong \AA^3$. 
Let $V_\eta$ be the representation corresponding to the generic point $\eta$ of $\AA^3$. It is 
defined over the field $L=k(\eta)=k(x,y,z)$ and has the form 
\[
\xymatrix{
&L \ar@(ur,dr)^x\ar@(dr,dl)^y\ar@(ul,dl)_z&\\
&&
}
\]
One easily checks that $\End_{k\langle X,Y,Z\rangle}(V_\eta)=L$.
\end{examples}
\subsection{The split representation functor}
In order to use Lemma \ref{ref-2.2.1-3}, we must be able to show that $i_x^\ast(\Ascr_n)$
is split. We discuss this next. For a $k$-scheme $X$, let  $\Rep_{n,A}(X)$ be the collection
of equivalence classes of quasi-coherent sheaves of left $A\otimes_k \Oscr_X$-modules $V$ on $X$ which are vector bundles or rank $n$
over $X$ such that for every point $x\in X$  we have that $i^\ast_x(V)$ is simple. We consider $V$ and $W$ to be equivalent if there exists an
invertible $\Oscr_X$-module $I$ such that $W=V\otimes_X I$. 
\begin{lemmas}
\label{ref-2.3.1-5}
 Assume that $\Rep_{n,A}$  is representable in the category of
$k$-schemes. Then $\Ascr_n$ is split and $\Rep_{n,A}$ is represented by $U_{n,A}$.
\end{lemmas}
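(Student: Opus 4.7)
The plan is to build a natural transformation $T\colon \Rep_{n,A}\to \Az_{n,A}$ sending a bundle $V$ with $A$-action to the pair $\bigl(A\to \HEnd(V)\bigr)$. Using the assumed representability of $\Rep_{n,A}$ by some $Y$ with universal family $\Vscr_Y$, the transformation $T$ yields a canonical morphism $\sigma\colon Y\to U_{n,A}$. The whole argument reduces to showing that $\sigma$ is an isomorphism, because then the pullback of $\Vscr_Y$ through $\sigma^{-1}$ is a rank $n$ bundle $\Vscr$ on $U_{n,A}$ with $\HEnd(\Vscr)\cong \Ascr_n$, witnessing both that $\Ascr_n$ is split and that $\Rep_{n,A}$ is represented by $U_{n,A}$.

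First I would check that $T$ is well-defined on equivalence classes (via the canonical identification $\HEnd(V\otimes I)\cong \HEnd(V)$) and that it lands in $\Az_{n,A}$: $\HEnd(V)$ is Azumaya of rank $n^{2}$, and fiberwise simplicity of $V$ together with Jacobson density at each point $x\in X$ forces $\rho(A)\cdot \Oscr_X=\HEnd(V)$. Next, pointwise injectivity of $T_X$ is a standard Morita argument, since an $A$-equivariant isomorphism $\HEnd(V)\cong \HEnd(V')$ exhibits $V'$ as a line-bundle twist of $V$. Via Yoneda this shows that $\sigma\colon Y\to U_{n,A}$ is a monomorphism of schemes.

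The crucial step is producing a right inverse $\tau\colon U_{n,A}\to Y$. I would pick an \'etale cover $p\colon U'\to U_{n,A}$ on which $p^{\ast}\Ascr_n$ trivializes as $\HEnd(\Vscr')$ for a rank $n$ bundle $\Vscr'$ with its induced $A$-action; fiberwise simplicity of $\Vscr'$ is automatic because each fiber of $p^{\ast}\Ascr_n$ is a split central simple algebra onto which $A$ surjects. Thus $\Vscr'\in \Rep_{n,A}(U')$ and so determines a morphism $\phi\colon U'\to Y$ with $\sigma\circ\phi=p$. On $U'\times_{U_{n,A}}U'$ the two pullbacks $p_1^{\ast}\Vscr'$ and $p_2^{\ast}\Vscr'$ both have $\HEnd$ equal to the pulled-back $\Ascr_n$, so by injectivity of $T$ they represent the same class in $\Rep_{n,A}(U'\times_{U_{n,A}}U')$, giving $p_1^{\ast}\phi=p_2^{\ast}\phi$. \'Etale descent of morphisms into the scheme $Y$ then glues $\phi$ to a morphism $\tau\colon U_{n,A}\to Y$ satisfying $\sigma\tau=\id$, and the monomorphism property of $\sigma$ upgrades this to $\tau\sigma=\id$ as well.

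The main subtlety is this descent step: one cannot directly descend the bundle $\Vscr'$ itself to $U_{n,A}$ as a vector bundle (that would already be the splitness of $\Ascr_n$, our conclusion), and the presheaf $\Rep_{n,A}$ is not obviously an \'etale sheaf. What saves us is that once representability of $\Rep_{n,A}$ by $Y$ is assumed, morphisms into $Y$ automatically form an \'etale sheaf, so working at the level of the morphism $\phi$ rather than the bundle $\Vscr'$ sidesteps the potential obstruction and delivers the desired global $\tau$.
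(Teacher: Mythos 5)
Your proposal is correct and follows the same strategy as the paper: construct the natural transformation $\Rep_{n,A}\to\Az_{n,A}$ via endomorphism algebras, check pointwise injectivity by Morita theory, and observe surjectivity after étale base change. The paper compresses the final step into the single phrase ``injective for any $X$, and surjective up to étale coverings, [hence] an isomorphism,'' and your careful identification of étale descent of morphisms into the representing scheme $Y$ (rather than descent of the bundle $\Vscr'$, which is not available) is exactly the right way to make that implicit step rigorous.
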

\begin{proof} Let $M_{n,A}$ be the representing scheme for $\Rep_{n,A}$, and let $\Vscr_{n,A}$ be the universal bundle
on $M_{n,A}$ (determined up to tensoring with a line bundle).
We have
a natural transformation 
\[
\phi:\Rep_{n,A}\r \Az_{n,A}
\]
sending $V$ to $\End_X(V)$. This yields a morphism between the representing
schemes
\begin{equation}
\label{ref-2.1-6}
\phi:M_{n,A}\r U_{n,A}
\end{equation}
such that 
\begin{equation}
\label{ref-2.2-7}
\phi^\ast\Ascr_n=\End_{M_{n,A}}(\Vscr_{n,A}).
\end{equation}
Clearly, $\phi(X)$ is injective for any $X$, and surjective up to �tale coverings.
This means that $\phi$ is actually an isomorphism and hence $\Ascr_n$ is split
by \eqref{ref-2.2-7}.
\end{proof}
\subsection{Partitioning by ranks}
\label{ref-2.4-8}
 Let
$l=ke_1+\cdots+ke_m$ be the semi-simple $k$-algebra determined by
$e_ie_j=\delta_{ij}e_i$, $\sum_i e_i=1$. Let $A$ be an $l$-algebra.
By this we mean that there is given a $k$-algebra morphism $l\r
A$.  We denote the images of the $(e_i)_i$ in $A$ also by $(e_i)_i$. Fix
 positive natural numbers $\alpha=(\alpha_1,\ldots,\alpha_m)$
such that $|\alpha|\overset{\text{def}}{=}\sum_i\alpha_i=n$.
We let $\Az_{\alpha,A}(R)$ be the subset of $\Az_{n,A}(R)$ consisting of equivalence classes
$\rho:A\r B$ such that $\rk_{R} \rho(e_i)B\rho(e_i)=\alpha_i^2$ for all $i$.  It
is easy to see that $\Az_{\alpha,A}$ is an open subfunctor of
$\Az_{n,A}$, and furthermore $\Az_{n,A}=\coprod_{\alpha} \Az_{\alpha,A}$.  We
obtain a corresponding decomposition $U_{n,A}=\coprod_{\alpha}
U_{\alpha,A}$ for the representing spaces. The restriction of
$\Ascr_n$ to $U_{\alpha,A}$ will be denoted by $\Ascr_\alpha$.

In a similar way we may define functors $\Rep_{\alpha,A}\subset \Rep_{n,A}$, where we now
require that for $V\in \Rep_{\alpha,A}(X)$ the rank of $e_i V$ is equal to $\alpha_i$. We will say that 
 $V$ has \emph{dimension vector} $\alpha$.
We have the following obvious generalization of Lemma \ref{ref-2.3.1-5}:
\begin{lemmas}
\label{ref-2.4.1-9}
 Assume that $\Rep_{\alpha,A}$  is representable in the category of
$k$-schemes. Then $\Ascr_\alpha$ is split and $\Rep_{\alpha,A}$ is represented by $U_{\alpha,A}$.
\end{lemmas}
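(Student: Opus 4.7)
My plan is to imitate the proof of Lemma~\ref{ref-2.3.1-5} essentially verbatim, after checking that the natural transformation used there respects the decomposition by dimension vectors.

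The first step is to verify that the natural transformation
\[
\phi : \Rep_{n,A} \r \Az_{n,A},\qquad V\mapsto \End_X(V),
\]
of Lemma~\ref{ref-2.3.1-5} restricts to a natural transformation
\[
\phi_\alpha : \Rep_{\alpha,A} \r \Az_{\alpha,A}.
\]
This is the one small computation required. If $V\in \Rep_{\alpha,A}(X)$ and $B=\End_X(V)$, then the image of $e_i$ in $B$ is the projector onto $e_iV$, so that
\[
\rho(e_i)\,B\,\rho(e_i) \;=\; e_i\,\End_X(V)\,e_i \;=\; \End_X(e_iV),
\]
which has rank $\alpha_i^2$ over $\Oscr_X$ because $e_iV$ is locally free of rank $\alpha_i$. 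Hence $\phi(V)\in \Az_{\alpha,A}(X)$.

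Assuming $\Rep_{\alpha,A}$ is represented by a scheme $M_{\alpha,A}$ equipped with a universal family $\Vscr_{\alpha,A}$, the restricted transformation yields a morphism $\phi_\alpha: M_{\alpha,A}\r U_{\alpha,A}$ together with the identification $\phi_\alpha^\ast \Ascr_\alpha = \End_{M_{\alpha,A}}(\Vscr_{\alpha,A})$, just as in \eqref{ref-2.2-7}. Exactly as in the proof of Lemma~\ref{ref-2.3.1-5}, the map $\phi_\alpha(X)$ is injective for every $X$ (two vector bundles with the same endomorphism algebra differ by a line bundle, by Morita theory) and surjective up to \'etale coverings (locally in the \'etale topology an Azumaya algebra in $\Az_{\alpha,A}$ splits as $\End_R(V)$ for an $A$-representation $V$ which automatically has dimension vector $\alpha$, because splitting preserves the rank data $\rk_R\rho(e_i)B\rho(e_i)=\alpha_i^2$). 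Therefore $\phi_\alpha$ is an isomorphism, and from the identification $\phi_\alpha^\ast \Ascr_\alpha = \End_{M_{\alpha,A}}(\Vscr_{\alpha,A})$ one concludes that $\Ascr_\alpha$ is split and that $U_{\alpha,A}$ represents $\Rep_{\alpha,A}$.

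I do not expect any serious obstacle: the only genuinely new input beyond Lemma~\ref{ref-2.3.1-5} is the compatibility of the rank decomposition on the two sides, which is the elementary calculation above. The remaining steps---injectivity and \'etale-local surjectivity of $\phi_\alpha$---are inherited from the proof of Lemma~\ref{ref-2.3.1-5}, since passing to the open subfunctors cut out by the rank conditions commutes with \'etale localization.
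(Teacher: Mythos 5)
Your proof is correct and follows exactly what the paper has in mind: the paper states Lemma~\ref{ref-2.4.1-9} as an ``obvious generalization'' of Lemma~\ref{ref-2.3.1-5} without proof, and the obvious generalization is precisely to check (as you do) that $V\mapsto\End_X(V)$ carries $\Rep_{\alpha,A}$ into the open subfunctor $\Az_{\alpha,A}$, after which the injectivity/\'etale-surjectivity argument of Lemma~\ref{ref-2.3.1-5} restricts verbatim to the $\alpha$-components.
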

\subsection{Stability conditions}
\label{ref-2.5-10}
For $\lambda,\mu\in \ZZ^{m}$ write $\lambda\cdot\mu=\sum_{i=1}^m \lambda_i\mu_i$. 
Let\footnote{We assume $0\in\NN$.} $\alpha\in \NN^{m}$,
and choose $\lambda\in \ZZ^{m}$ such that $\lambda\cdot \alpha=0$.

If $K/k$ is a
field extension, then $V\in \Rep_{\alpha,A}(K)$ is $\lambda$-(semi-)stable \cite{King}  if for any proper $A_K$-subrepresentation
$0\neq W\subsetneq V$ with dimension vector $\beta$  we have
\[
\lambda\cdot \beta(\ge)> \lambda\cdot\alpha.
\]
We say that $V\in \Rep_{\alpha,A}(X)$ is $\lambda$-(semi-)stable if for any $i:\Spec K\r X$
with $K/k$ a field extension it is true that $i^\ast(V)$ is $\lambda$-(semi-)stable.
We denote the corresponding subfunctor of $\Rep_{\alpha,A}$  by $\Rep_{\alpha,\lambda,A}$.
Recall 
\begin{theorems}
Assume that $A$ is finitely generated over $k$ and that $\alpha$ is indivisible. Then
$\Mscr_{\alpha,\lambda,A}$ is
representable by a scheme of finite type over $k$.
\end{theorems}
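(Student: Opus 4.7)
The plan is to deduce this from King's GIT construction of moduli of quiver representations \cite{King}, using the indivisibility of $\alpha$ to upgrade coarse to fine representability. First I would reduce to a finite quiver with relations: since $A$ is a finitely generated $l$-algebra and $l=ke_1\oplus\cdots\oplus ke_m$, picking finitely many $l$-bimodule generators gives a surjection $\pi:kQ\twoheadrightarrow A$ from the path algebra of a finite quiver $Q$ on vertices $\{1,\ldots,m\}$; for fixed $\alpha$ only finitely many relations $f_1,\ldots,f_N\in\ker\pi$ are needed. Representations of $A$ of dimension vector $\alpha$ then form the closed affine subscheme
\[
R(A,\alpha)=\bigl\{(\rho_a)\in\textstyle\prod_{a:i\to j}\Hom_k(k^{\alpha_i},k^{\alpha_j}):\rho(f_t)=0,\;\forall\,t\bigr\}
\]
of an affine space, equipped with the conjugation action of $G(\alpha):=\prod_i\GL_{\alpha_i}$ whose orbits are isomorphism classes of representations. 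The diagonal $\Delta\cong\GG_m\subset G(\alpha)$ acts trivially.

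Next I would invoke King's theorem. The parameter $\lambda$ with $\lambda\cdot\alpha=0$ gives a character $\chi_\lambda:G(\alpha)\to\GG_m$, $(g_i)\mapsto\prod_i\det(g_i)^{\lambda_i}$, trivial on $\Delta$. By King's main theorem \cite{King}, the GIT $\chi_\lambda$-(semi)stable locus of $R(A,\alpha)$ agrees with the $\lambda$-(semi)stable locus from \S\ref{ref-2.5-10}. Mumford's machinery then yields a categorical quotient, and restricting to the stable locus $R^s$ gives a geometric quotient $M:=R^s/PG(\alpha)$ of finite type over $k$, where $PG(\alpha):=G(\alpha)/\Delta$.

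Finally, indivisibility upgrades $M$ to a fine moduli space for $\Mscr_{\alpha,\lambda,A}$. For a $\lambda$-stable representation $V$ the endomorphism ring $D:=\End_A(V)$ is a division algebra over $k$; each $V_i:=e_iV$ is a $D$-module, so $\dim_k D$ divides every $\alpha_i$, and indivisibility of $\alpha$ forces $D=k$. Consequently $PG(\alpha)$ acts freely on $R^s$. I expect the most delicate step to be the descent of the tautological $G(\alpha)$-equivariant family $\Vscr$ on $R^s$ to a universal family on $M$: the centre $\Delta$ acts on $\Vscr$ through its defining character, so $\Vscr$ descends to $M$ only \emph{up to tensoring with a line bundle}, i.e.\ as a $\Delta$-twisted sheaf carrying a natural Brauer-type class. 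This is exactly the ambiguity already built into the equivalence relation in the definition of $\Rep_{\alpha,A}(X)$ in \S\ref{ref-2.4-8}, so the descended family represents $\Mscr_{\alpha,\lambda,A}$ as claimed.
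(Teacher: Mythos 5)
Your argument is correct and follows essentially the same route as the paper, which simply cites \cite[Prop.\ 5.3]{King} and remarks that King's proof (stated for finite-dimensional $A$) carries over to finitely generated $A$. Your first paragraph is precisely the carryover the paper leaves implicit (cut down to a finite quiver with finitely many relations sufficient for the fixed dimension vector $\alpha$), and the rest reproduces King's GIT construction together with his use of indivisibility to trivialize the descent obstruction for the universal family.
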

This is  \cite[Prop 5.3]{King}. The given reference assumes $A$ to be finite dimensional, but the proof carries over completely in the case where $A$ is finitely generated over $k$.

We denote the representing
scheme by $M_{\alpha,\lambda,A}$, and the corresponding universal
$A$-representation by $\Vscr_{\alpha,\lambda,A}$.
We will prove the following generalization of Lemma \ref{ref-2.2.1-3}. The point is again that we take endomorphisms over $A$, and not over $A_{k(x)}$.  
\begin{propositions} \label{ref-2.5.2-11} Let $A$ be finitely generated, and let $\alpha$ be indivisible. 
For $i_x:x\r M_{\alpha,\lambda,A}$ put $V_x=i_x^\ast(\Vscr_{\alpha,\lambda,A})$.
Then 
\begin{equation}
\label{ref-2.3-12}
\End_{A}(V_x)=k(x).
\end{equation}
\end{propositions}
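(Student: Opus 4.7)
The plan is to reduce to Lemma~\ref{ref-2.2.1-3} by constructing a scheme morphism $\phi : M_{\alpha,\lambda,A}\to U_{n,A}$ that factors through the split locus and preserves residue fields, transferring the endomorphism computation for the universal irreducible module at $\phi(x)$ to $V_x$.

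First I would verify that, for every field extension $K/k$, each $V\in\Rep_{\alpha,\lambda,A}(K)$ is absolutely simple, i.e.\ $\End_{A_K}(V)=K$. By Schur's lemma applied to the $\lambda$-stable module $V$, the endomorphism ring $D:=\End_{A_K}(V)$ is a division algebra over $K$ with $K\subseteq Z(D)\subseteq D$. Setting $m=[Z(D):K]$ and $d^2=[D:Z(D)]$, each $e_iV$ becomes a right $D$-module, so $\alpha_i=d_i d^2 m$ for positive integers $d_i$. Indivisibility of $\alpha$ forces $d=m=1$, whence $D=K$ and $V$ is an $A_K$-module of $K$-dimension $n$ with $A_K\twoheadrightarrow \End_K(V)\cong M_n(K)$ surjective.

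Next I would construct a natural transformation $\Rep_{\alpha,\lambda,A}\to \Az_{\alpha,A}$ sending a family $V$ on $X$ to $\End_{\Oscr_X}(V)$, equipped with the canonical $A$-algebra structure. Absolute simplicity on fibers together with Nakayama's lemma makes $A\otimes_k\Oscr_X\to\End_{\Oscr_X}(V)$ surjective, while the dimension vector $\alpha$ of $V$ ensures that $\End_{\Oscr_X}(V)$ is Azumaya of rank $n^2$ with the rank stratification required to land in $\Az_{\alpha,A}$. Passing to representing schemes yields
\[
\phi : M_{\alpha,\lambda,A}\to U_{\alpha,A}\subseteq U_{n,A},
\]
satisfying $\phi^*\Ascr_n\cong \End_M(\Vscr_{\alpha,\lambda,A})$; in particular every $x\in M_{\alpha,\lambda,A}$ maps to a split point $y:=\phi(x)$.

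It remains to prove $k(y)=k(x)$ together with $V_x\cong V_y$ as $A$-modules; Lemma~\ref{ref-2.2.1-3} then yields $\End_A(V_x)=\End_A(V_y)=k(y)=k(x)$. The inclusion $k(y)\hookrightarrow k(x)$ induced by the factorization $\Spec k(x)\to \Spec k(y)\to U_{n,A}$ is automatic. If it were strict, $V_x$ would descend to a representation $V_y$ over $k(y)$; since $\lambda$-stability is preserved and reflected by base field extension (a destabilizing subrepresentation over $k(y)$ base-changes to one over $k(x)$), $V_y$ would itself be $\lambda$-stable of dimension vector $\alpha$, giving a $k(y)$-point $x'\in M_{\alpha,\lambda,A}$ through which $x$ factors; representability of $\Rep_{\alpha,\lambda,A}$ would force $x=x'$, and hence $k(x)=k(y)$. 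This descent step is the principal obstacle: one must verify that the minimal field of definition of $V_x$ as a stable representation coincides with the residue field $k(x)$ of the moduli point $x$.
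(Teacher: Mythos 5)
Your route is genuinely different from the paper's, so let me compare before pointing out the problem. After reducing to $A=kQ$, the paper covers $M_{\alpha,\lambda,Q}$ by the opens $M_{\alpha,W,Q}$ and passes to the universal localization $(kQ)_\delta$: over $(kQ)_\delta$ every finite-dimensional module of dimension vector $\alpha$ is automatically stable, so the \emph{full unrestricted} functor $\Rep_{\alpha,(kQ)_\delta}$ is representable by $M_{\alpha,W,Q}$, and Lemma~\ref{ref-2.4.1-9} then identifies this open, together with its Azumaya algebra, with $U_{\alpha,(kQ)_\delta}$; Lemma~\ref{ref-2.2.1-3} is applied to $x$ viewed inside $U_{\alpha,(kQ)_\delta}$. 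Your proposal instead maps $M_{\alpha,\lambda,A}$ directly into $U_{n,A}$ via $V\mapsto\End(V)$ and tries to verify split-ness and preservation of residue fields at the image.

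The gap you flag at the end is real, and the sketch you give for closing it does not work. The factorization $\Spec k(x)\to\Spec k(y)\to U_{n,A}$ (with $y=\phi(x)$) records only that the Azumaya algebra $\End_{k(x)}(V_x)$ descends to the central simple $A$-algebra $i_y^\ast\Ascr_n$ over $k(y)$; it does \emph{not} give you an $A$-module over $k(y)$. An $A_{k(y)}$-module of dimension $n$ over $k(y)$ exists if and only if $i_y^\ast\Ascr_n$ is split, which is precisely what is to be proved, so the claim ``$V_x$ would descend to $V_y$'' is circular. For the same reason, the earlier assertion that ``every $x$ maps to a split point'' is unjustified as stated: $\phi^\ast\Ascr_n\cong\End_M(\Vscr_{\alpha,\lambda,A})$ only shows that $i_x^\ast\phi^\ast\Ascr_n$ is split over $k(x)$, not that $i_y^\ast\Ascr_n$ is split over the a priori smaller field $k(y)$.

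Your approach can nevertheless be completed without following the paper's route. The natural transformation $V\mapsto\End_{\Oscr_T}(V)$ is injective on $T$-valued points for every $T$: by Morita theory for Azumaya algebras, two families with isomorphic $A$-endomorphism algebras differ by a line bundle twist, and $\Rep_{\alpha,\lambda,A}$ is defined up to such twist. Hence $\phi$ is a \emph{monomorphism} of schemes, and monomorphisms of schemes induce trivial residue field extensions. So $k(y)=k(x)$, whence $i_y^\ast\Ascr_n\cong i_x^\ast\phi^\ast\Ascr_n=\End_{k(x)}(V_x)$ is split, and Lemma~\ref{ref-2.2.1-3} applies directly. This is what you need; the descent discussion should be replaced by the monomorphism observation.
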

The proof of Proposition \ref{ref-2.5.2-11} uses the representation theory of quivers. See \cite{Brion} for a reference. 
Let $Q=(Q_0,Q_1,h,t)$ be a finite  quiver with vertices $Q_0$ and arrows $Q_1$. The maps
$h,t:Q_1\r Q_0$ associate an arrow with its head and tail. 
If $R$ is a $k$-algebra, we let $RQ$ be the path algebra of~$Q$ with coefficients in $R$. 
Any finitely generated $l$-algebra
in the sense of \S\ref{ref-2.4-8} is a quotient of a suitable path algebra $kQ$
with $l$ corresponding to $\bigoplus_{i\in Q_0} ke_i$, where $e_i$ is the length zero path in $Q$ associated to
the  vertex $i$. It is easy to see that it is sufficient to prove Proposition \ref{ref-2.5.2-11}
in the case $A=kQ$. Therefore we specialize to that case, and 
we will replace $A$ in the notation by $kQ$.

For a $kQ$-representation $V$ we write $V_i=e_i V$. Let $P_i=kQe_i$ be the
standard projective representation corresponding to $i\in Q_0$.
Recall the following:
\begin{propositions} \label{ref-2.5.3-13} (Green) Every projective $kQ$-module $P$ is of
the form $\bigoplus_{i\in Q_0} W_i\otimes_k P_i$, where $W_i$ is a $k$-vector space. The $W_i$ are 
uniquely determined by $P$.
\end{propositions}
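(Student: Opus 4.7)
The plan is to identify any projective $kQ$-module $P$ with an extension of scalars from $l$. Let $J \subset kQ$ be the two-sided \emph{arrow ideal}, spanned by all paths of length $\geq 1$, so that $kQ/J = l$. For any left $kQ$-module $P$, I define $W_i := e_i(P/JP)$, a $k$-vector space, and note that $P/JP = \bigoplus_i W_i$ as an $l$-module.

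Since $l$ is semisimple, the quotient $P \twoheadrightarrow P/JP$ admits an $l$-linear section $s$. By the adjunction between $kQ \otimes_l -$ and restriction along $l \hookrightarrow kQ$, $s$ determines a $kQ$-linear map
\[
\phi \colon kQ \otimes_l (P/JP) \longrightarrow P,\qquad a \otimes x \longmapsto a \cdot s(x).
\]
Using the right $l$-module decomposition $kQ = \bigoplus_i kQe_i = \bigoplus_i P_i$, the source canonically rewrites as $\bigoplus_i W_i \otimes_k P_i$; by construction, $\phi$ reduces to the identity modulo $J$.

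The key step is to show $\phi$ is an isomorphism. First I check the case $P = (kQ)^{(I)}$ directly: taking $s$ to be the natural inclusion $l^{(I)} \hookrightarrow (kQ)^{(I)}$ makes $\phi$ the canonical identification $\bigoplus_i (ke_i)^{(I)} \otimes_k P_i \to (kQ)^{(I)}$. For a general projective $P$, I choose a decomposition $(kQ)^{(I)} = P \oplus P'$ and pick $l$-linear sections on $P$ and $P'$ separately; their sum is a section for $(kQ)^{(I)}$, so by naturality $\phi_{(kQ)^{(I)}}$ decomposes as $\phi_P \oplus \phi_{P'}$. Since the total map is an isomorphism, each diagonal summand is too, giving the desired decomposition of $P$.

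Uniqueness follows because any decomposition $P \cong \bigoplus_j V_j \otimes_k P_j$ gives $P/JP \cong \bigoplus_j V_j \otimes_k (P_j/JP_j) = \bigoplus_j V_j \cdot e_j$, hence $W_i = e_i(P/JP) \cong V_i$ is intrinsically determined by $P$. The main subtlety I anticipate is that $J$ is not nilpotent when $Q$ has oriented cycles, so a direct Nakayama-type argument for surjectivity of $\phi$ is unavailable; the reduction to the free case via direct summands is the device that circumvents this difficulty.
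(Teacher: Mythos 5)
The uniqueness argument is correct and coincides in substance with the paper's: you recover $W_i$ as $e_i(P/JP)$, and the paper recovers it as $S_i\otimes_{kQ}P$, which is the same $k$-vector space since $S_i = le_i = e_i(kQ/J)$ as a right $kQ$-module.

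The existence argument, however, has a genuine gap at the reduction-to-the-free-case step. You verify that $\phi$ is an isomorphism for $(kQ)^{(I)}$ only when $s$ is the \emph{natural} inclusion $l^{(I)}\hookrightarrow (kQ)^{(I)}$. After decomposing $(kQ)^{(I)} = P\oplus P'$ and choosing $l$-linear sections $s_P$ and $s_{P'}$ independently, the combined section $s_P\oplus s_{P'}$ is in general \emph{not} the natural one, so the assertion ``since the total map is an isomorphism'' is unjustified -- yet it is exactly what you need to conclude that $\phi_P$ is an isomorphism. Worse, the statement that $\phi$ is an isomorphism for free modules and \emph{arbitrary} $l$-linear sections is false: take $Q$ to be the one-loop quiver, so $kQ=k[x]$, $l=k$, $J=(x)$, and $P=kQ$. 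The section $s(1)=1+x$ yields $\phi\colon k[x]\otimes_k k\to k[x]$ equal to multiplication by $1+x$, which is injective but not surjective. (One cannot fix this by restricting the natural section either, since the natural section on $(kQ)^{(I)}$ does not respect a chosen splitting $P\oplus P'$.) So the direct-summand device does not actually circumvent the non-nilpotence of $J$, and the argument is circular: to know the combined $\phi$ on the free module is an isomorphism one would already need to know $\phi_P$ is one. This is precisely why the paper cites Green's Corollary 5.5, whose proof uses Gr\"obner-basis techniques for path algebras; the existence statement is genuinely nontrivial when $Q$ has oriented cycles and does not follow from a soft lifting argument.
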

\begin{proof} The fact that all projectives are of the indicated form is \cite[Cor.\ 5.5]{Green1}.
Then $W_i$ can be recovered from $P$ as $W_i=S_i\otimes_{kQ} P$, where $S_i$ is the standard
simple corresponding to vertex $i$.
\end{proof}
Below we will need a definition which is dual to the concept of dimension vector $\udim V$ of $V$. Assume that
$V\in \Mod(kQ)$ is finitely presented. Then $V$ has a projective resolution
\[
0\r \oplus_i P_i^{\oplus b_i} \r \oplus_i P_i^{\oplus a_i}\r V\r0
\]
with $P_i$ the projective $kQ$-representation associated to vertex $i$. We put
\[
(\odim V)_i=a_i-b_i.
\]
It follows from Proposition \ref{ref-2.5.3-13} that this is a well defined element of
$\ZZ^{Q_0}$.

For $W$ a finitely presented and $V$ a finite dimensional $kQ$-representation, one has
\[
 \odim W\cdot \udim V =\dim \Hom_{kQ}(W,V)-\dim \Ext_{kQ}^1(W,V).
\]
If $\Hom_{kQ}(W,V)=\Ext^1_{kQ}(W,V)=0$ then we write $W\perp V$.  Let $W^\perp$ be the category of all $V$ such that $W\perp V$. 
Recall the following result
\begin{theorems} \cite{WeymanDerksen,DomokosZubkov}\cite[Cor 1.1]{SchVdB3} \label{ref-2.5.4-14}
Assume that $V\in \Mod(kQ)$ is finite-dimensional. Then $V$ is $\lambda$-semi-stable
if and only there is a finitely presented   $0\neq W\in \Mod(kQ)$ such that 
$W\perp V$, and such that $\odim W$ is a strictly negative multiple of~$\lambda$.  
\end{theorems}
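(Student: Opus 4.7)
The plan is to derive the stated characterisation by combining King's reformulation of semi-stability in terms of the non-vanishing of a semi-invariant on the representation variety with the Derksen--Weyman/Domokos--Zubkov description of the ring of semi-invariants in terms of Schofield determinants. The argument splits naturally into an elementary direction and a harder direction that depends on a significant external input; after that input is quoted, the rest is a homological bookkeeping exercise.

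For the easy implication ($\Leftarrow$), suppose a finitely presented $W\neq 0$ satisfies $W\perp V$ with $\odim W=-c\lambda$ for some $c>0$. The pairing formula
\[
\odim W\cdot \udim V=\dim \Hom_{kQ}(W,V)-\dim \Ext^1_{kQ}(W,V)
\]
combined with $W\perp V$ forces $\lambda\cdot \udim V=0$, so the setup is consistent. For any proper non-zero subrepresentation $V'\subsetneq V$ with dimension vector $\beta$, apply $\Hom_{kQ}(W,-)$ to $0\to V'\to V\to V/V'\to 0$. Because the middle $\Hom$ and $\Ext^1$ terms vanish, the six-term sequence collapses to $\Hom(W,V')=0$, $\Ext^1(W,V/V')=0$, and $\Hom(W,V/V')\cong \Ext^1(W,V')$. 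Therefore $\odim W\cdot \beta=-\dim \Hom(W,V/V')\leq 0$, which is exactly $\lambda\cdot \beta\geq 0=\lambda\cdot \alpha$, so $V$ is $\lambda$-semi-stable.

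For the harder implication ($\Rightarrow$), invoke King's theorem to replace semi-stability of $V$ by the existence of a polynomial semi-invariant $f$ on the representation variety $R(Q,\alpha)=\prod_{a\in Q_1}\Hom_k(k^{\alpha_{t(a)}},k^{\alpha_{h(a)}})$ of weight $c\lambda$ (for some $c>0$) with $f(V)\neq 0$. Now call in the Schofield construction: to each finitely presented $W$ with minimal projective resolution $0\to \bigoplus_i P_i^{b_i}\to \bigoplus_i P_i^{a_i}\to W\to 0$ associate the function $c^W$ whose value at $V$ (when $\odim W\cdot \udim V=0$) is the determinant of the square map induced on $\Hom(-,V)$. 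A direct check shows that $c^W$ is a semi-invariant of weight a positive multiple of $-\odim W$ and that $c^W(V)\neq 0$ iff $W\perp V$. The theorem of Derksen--Weyman/Domokos--Zubkov states that these Schofield semi-invariants span the whole space of semi-invariants; decomposing $f$ in this basis and extracting a term that survives evaluation at $V$ yields the required $W$ with $\odim W$ a strictly negative multiple of $\lambda$ and $W\perp V$.

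The principal obstacle is the Derksen--Weyman/Domokos--Zubkov spanning result, which is the non-trivial invariant-theoretic input of the proof: it proceeds by a polarisation argument combined with the first fundamental theorem of invariant theory for $\GL_n$, and is entirely external to the homological reasoning above. Once it is cited, the theorem assembles mechanically from King's criterion, the $\odim$--$\udim$ pairing formula, and a direct computation of the weight of the Schofield semi-invariant $c^W$.
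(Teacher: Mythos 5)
This is a result the paper does not prove: it is recalled verbatim from the literature, with the citations \cite{WeymanDerksen,DomokosZubkov} and \cite[Cor 1.1]{SchVdB3} doing all the work, so there is no internal proof to compare against. Your sketch is a correct reconstruction of how those references establish it. The ($\Leftarrow$) direction is complete as written: $W\perp V$ forces $\lambda\cdot\alpha=0$ via the pairing formula, and for a subrepresentation $V'$ the six-term sequence (which terminates because $kQ$ is hereditary, so $\Ext^2=0$) gives $\odim W\cdot\udim V'=-\dim\Ext^1_{kQ}(W,V')\le 0$, i.e.\ $\lambda\cdot\udim V'\ge 0$, which is the paper's semi-stability convention. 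The ($\Rightarrow$) direction correctly isolates the genuine content: King's GIT criterion produces a semi-invariant of weight a positive multiple of $\lambda$ not vanishing at $V$; the Schofield semi-invariant $c^W$ attached to a projective presentation has weight exactly $-\odim W$ and is non-zero at $V$ iff $W\perp V$; and the spanning theorem (which is precisely \cite[Cor 1.1]{SchVdB3}) lets you extract a single $c^W$ surviving evaluation at $V$. Two small points of hygiene: the Schofield semi-invariants form a spanning set rather than a basis, which is all you need; and the statement is vacuously problematic for $\lambda=0$ (e.g.\ $Q=A_1$ admits no non-zero finitely presented $W$ with $\odim W=0$), an edge case inherited from the paper's formulation and irrelevant to its applications, where $\lambda\ne 0$.
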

For $\lambda\cdot \alpha=0$, and for $W$ such that $\odim W=-n\lambda$, $n>0$ 
we define a subfunctor $\Mscr_{\alpha,W,Q}$
of $\Mscr_{\alpha,\lambda,Q}$ consisting of those representations $V$ with $\udim
V=\alpha$ such that $W\perp V$. 
If $\alpha$ is indivisible, then this subfunctor is representable  by an open subset $M_{\alpha,W,Q}$
of $M_{\alpha,\lambda,Q}$. 
The above theorem may be rephrased as
saying that $(M_{\alpha,W,Q})_W$ is an open covering of $M_{\alpha,\lambda,Q}$. We let $\Vscr_{\alpha,W,Q}$ be the restriction of
$\Vscr_{\alpha,\lambda,Q}$ to $M_{\alpha,W,Q}$.

Let 
\[
0\r P\xrightarrow{\delta} Q\r W\r 0
\]
be a minimal projective resolution of $W$ and let $(kQ)_\delta$ 
be the corresponding universal localisation.\footnote{If $A$ is a ring and $\delta:P\r Q$ is
map between finitely generated projective left $A$-modules then $A\r A_\delta$ is 
universal for the ring extensions $A\r B$ such that $B\otimes_A\delta$
is an isomorphism. $A\r A_\delta$ is an epimorphism in the category of
rings and a left $A$-module $M$ has a (necessarily unique) $A_\delta$-action
provided the functor $\Hom_A(-,M)$ transforms $\delta$ into an invertible morphism.}

Then $V\in W^\perp$ if and
only if $\Hom_{kQ}(\delta,V)$ is invertible. In other words, $V\in W^\perp$ if and only if the $kQ$-action on $V$ 
extends to a $(kQ)_{\delta}$-action. Thus we obtain
\begin{equation}
\label{ref-2.4-15}
W^\perp\cong \Mod((kQ)_\delta).
\end{equation}
Note further
\begin{lemmas} \cite[Lemma 3.1]{ALeB} Let $W$ be a finitely presented $kQ$-representation and let $V\in W^\perp\cap \Mod(KQ)$ be finite dimensional over $K$, where $K/k$ is a field extension. 
Then $V$ is simple in $W^\perp\cap \Mod(KQ)$ if and only if $V$ is $-\odim W$-stable.
\end{lemmas}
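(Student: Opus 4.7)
The plan is to use the short exact sequence $0 \to U \to V \to V/U \to 0$ for a proper nonzero $KQ$-subrepresentation $U \subsetneq V$ as a tool that converts the stability inequality into a vanishing statement about $\Hom_{kQ}(W,-)$. Two ingredients drive the whole argument: first, $V \in W^\perp$, so $\lambda \cdot \udim V = -\odim W \cdot \udim V = -\dim_K\Hom_{kQ}(W,V) + \dim_K \Ext^1_{kQ}(W,V) = 0$, meaning the $\lambda$-stability inequality for $\lambda = -\odim W$ reduces to $\odim W \cdot \udim U < 0$ for every proper nonzero $U$; second, $W$ has projective dimension $\le 1$ thanks to the minimal resolution $0 \to P \xrightarrow{\delta} Q \to W \to 0$, so higher $\Ext$-groups vanish.

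Applying $\Hom_{kQ}(W,-)$ to the short exact sequence and using $V \in W^\perp$ and $\Ext^{\ge 2}_{kQ}(W,-) = 0$ yields
\begin{equation*}
\Hom_{kQ}(W,U) = 0, \qquad \Ext^1_{kQ}(W,U) \cong \Hom_{kQ}(W,V/U), \qquad \Ext^1_{kQ}(W,V/U) = 0.
\end{equation*}
In particular
\begin{equation*}
\odim W \cdot \udim U = \dim_K \Hom_{kQ}(W,U) - \dim_K \Ext^1_{kQ}(W,U) = -\dim_K \Hom_{kQ}(W,V/U) \le 0,
\end{equation*}
and equality holds precisely when $V/U \in W^\perp$; in that case the formulas above also force $U \in W^\perp$, so $U$ becomes a proper nonzero subobject of $V$ inside $W^\perp \cap \Mod(KQ)$.

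The two implications then fall out at once. For $(\Rightarrow)$, if $V$ is simple in $W^\perp \cap \Mod(KQ)$ and $0 \ne U \subsetneq V$ is any proper $KQ$-subrepresentation, equality $\odim W \cdot \udim U = 0$ would exhibit $U$ as a subobject in $W^\perp \cap \Mod(KQ)$ contradicting simplicity, so the inequality is strict and $V$ is $-\odim W$-stable. For $(\Leftarrow)$, any proper nonzero subobject $U$ of $V$ in $W^\perp \cap \Mod(KQ)$ lies in $W^\perp$, hence $\odim W \cdot \udim U = 0$, which violates strict $-\odim W$-stability; so $V$ has no such subobjects and is simple in $W^\perp \cap \Mod(KQ)$.

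I do not expect any real obstacle: the only point requiring mild care is confirming that the pairing $\odim W \cdot \udim(-) = \dim_K\Hom_{kQ}(W,-) - \dim_K\Ext^1_{kQ}(W,-)$ remains valid for finite-dimensional $KQ$-modules viewed as $kQ$-modules, which follows since $\Hom_{kQ}(P_i, V) = V_i$ as $K$-vector spaces and $\odim$ is computed on a finite $kQ$-projective resolution of $W$ built from the $P_i$'s.
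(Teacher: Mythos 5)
Your proof is correct and follows the same line as the paper's: in both cases the key step is computing $\odim W\cdot\udim U=\dim_K\Hom_{kQ}(W,U)-\dim_K\Ext^1_{kQ}(W,U)$ and using $\Hom_{kQ}(W,U)=0$ (the paper obtains $\odim W\cdot\udim U=-\dim_K\Ext^1_{kQ}(W,U)\le 0$ directly, while your detour through $V/U$ re-expresses this as $-\dim_K\Hom_{kQ}(W,V/U)$, which is equivalent). You also supply the converse implication (stable $\Rightarrow$ simple), which the paper cites to \cite{ALeB} without reproving; your argument for it is sound, modulo the small unstated observation that $W^\perp\cap\Mod(KQ)$ is closed under kernels in $\Mod(KQ)$ so that its subobjects are honest $KQ$-subrepresentations.
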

\begin{proof} The blanket hypothesys in \cite{ALeB} that $Q$ should not have any oriented cycles is not necessary for this particular lemma. For the benefit of the reader we recall the proof of the implication
simple$\Rightarrow$stable, as it is instructive. Assume that $V$ is simple in $W^\perp\cap \Mod(KQ)$,
and let $0\neq V'\subsetneq V$ be a $KQ$-subrepresentation. Since $\Hom_{kQ}(W,V')=0$, we have
$\odim W\cdot \udim V'\le 0$. If $\odim W\cdot \udim V'=0$ then we also have $\Ext^1_{kQ}(W,V')=0$
and hence $W\perp V'$. Since $V$ is simple this is a contradiction. It follows
$\dim \Ext^1_{kQ}(W,V')>0$, and so $\odim W\cdot \udim V'<0$.
\end{proof}
From this we easily obtain an isomorphism of functors
\[
\Rep_{\alpha,(kQ)_\delta}\cong \Mscr_{\alpha,W,Q}
\]
and hence, by Lemma \ref{ref-2.4.1-9}, it follows that $\Ascr_{\alpha,(kQ)_\delta}$ is split and that there is an isomorphism
\[
U_{\alpha,(kQ)_\delta}\cong M_{\alpha,W,Q}.
\]
\begin{proof}[Proof of Proposition \ref{ref-2.5.2-11}] There exists some $W$ such that
$x\in M_{\alpha,W,Q}$. Now let the notation be as above. Then by \eqref{ref-2.4-15}
\[
\End_{kQ}(V_x)=\End_{(kQ)_\delta}(V_x).
\]
It now suffices to invoke Lemma \ref{ref-2.2.1-3}.
\end{proof}
\begin{examples}
\label{ref-2.5.6-16}
Let $Q$ be the generalised Kronecker quiver with 4 arrows.
\[
\xymatrix{
*+[o][F-]{\scriptscriptstyle 1} \ar@<3ex>[r]|{T} \ar@<1ex>[r]|{X} \ar@<-1ex>[r]|{Y}  \ar@<-3ex>[r]|{Z} &*+[o][F-]{\scriptscriptstyle 2}
}.
\]
Let $\alpha=(1,1)$ and $\lambda=(-1,1)$. A representation of dimension vector $\alpha$ is $\lambda$-stable if not all arrows are zero, and two such representations are isomorphic if one is obtained from the other by multiplying all arrows by the same scalar. This corresponds to a point in $\PP^3$. From there one easily shows that $M_{\alpha,\lambda,A}=\PP^3$. Let $V_\eta$ be the representation corresponding to the generic point $\eta$ of $\PP^3$. It is 
defined over the field $L=k(\eta)=k(x,y,z)$ and has the form 
\[
\xymatrix{
L \ar@<3ex>[r]|{1} \ar@<1ex>[r]|{x} \ar@<-1ex>[r]|{y}  \ar@<-3ex>[r]|z & L
}.
\]
One easily checks that $\End_{A}(V_\eta)=L$.

If $P_{1}$, $P_2$ denote the projective $Q$-representations corresponding to the vertices $1$, $2$ and $W=\coker(P_2\xrightarrow{T} P_1)$ then $V_\eta\in W^\perp$.
The universal localisation of $kQ$ at $T$ is obtained by adjoining to $Q$ an inverse arrow $T^{-1}$ from $2$ to $1$. One checks that $(kQ)_T$ is Morita equivalent to 
$k\langle X,Y,Z\rangle$, and this example reduces in fact to Example \ref{ref-2.2.2-4}.
\end{examples}

It is difficult to say when $M_{\alpha,\lambda,Q}$ is non-empty, but Proposition \ref{ref-2.5.7-17} below will be
sufficient for our purposes. For $\alpha,\beta\in \ZZ^{Q_0}$ write
\[
\langle \alpha,\beta\rangle=\sum_i\alpha_i\beta_i-\sum_{a\in Q_1} \alpha_{t(a)}\beta_{h(a)}.
\]
If $W$, $V$ are finite dimensional $kQ$-representations then
\[
 \langle \udim W, \udim V \rangle=\dim \Hom_{kQ}(W,V)-\dim \Ext_{kQ}^1(W,V).
\]
Put 
\[
(\alpha,\beta)=\langle \alpha,\beta\rangle+\langle \beta,\alpha\rangle
\]
and let $e_i\in \NN^{Q_0}$ be such that $(e_i)_j=\delta_{ij}$.  The fundamental region \cite{Kac1} is defined as 
\[
F(Q)=\{\alpha\in \NN^{Q_0}\mid \forall i: (e_i,\alpha)\le 0, \text{$\alpha$ has connected support}\}.
\]
Recall
\begin{lemmas}
$F(Q)$ is empty for a Dynkin quiver and is spanned by a single vector
$\delta$ satisfying $(\delta,\delta)=0$ in the case that $Q$ is
extended Dynkin. If $Q$ contains a component with more than one vertex which is not Dynkin
or extended Dynkin then $F(Q)$ contains indivisible
$\alpha$ such that $(\alpha,\alpha)$ is an arbitrarily large negative
number. 
\end{lemmas}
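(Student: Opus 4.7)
The plan combines the key identity
\[
(\alpha,\alpha)=\sum_i\alpha_i\,(e_i,\alpha),
\]
obtained by expanding both sides of the definitions of $\langle-,-\rangle$ and $(-,-)$, with the classical signature of the Tits form in the three regimes.

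In the \emph{Dynkin case}, the Tits form is positive definite. Any $\alpha\in F(Q)$ is nonzero by the connected-support hypothesis, so $(\alpha,\alpha)>0$; but $\alpha_i\ge 0$ and $(e_i,\alpha)\le 0$ yield $(\alpha,\alpha)\le 0$ via the identity above. Contradiction, so $F(Q)=\emptyset$. In the \emph{extended Dynkin case}, the Tits form is positive semi-definite with one-dimensional radical, spanned by the minimal imaginary root $\delta\in\NN^{Q_0}_{>0}$. For $\alpha\in F(Q)$ the identity forces $(\alpha,\alpha)\le 0$ while semi-definiteness forces $(\alpha,\alpha)\ge 0$, so $\alpha\in\QQ\delta$; by positivity and connected support, $\alpha=n\delta$ for some $n\ge 1$, and conversely every $n\delta$ is in $F(Q)$.

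The substantive case is the third. Since elements of $F(Q)$ have connected support, I reduce to the offending component and assume $Q$ connected with $\ge 2$ vertices and neither Dynkin nor extended Dynkin. Let $C$ denote the symmetric matrix of $(-,-)$, so $(C\alpha)_i=(e_i,\alpha)$; its diagonal equals $2$ and its off-diagonal is non-positive, so $2I-C$ is the (non-negative, irreducible) adjacency matrix of $Q$. Perron--Frobenius yields a strictly positive eigenvector $v>0$ with spectral radius $\rho>0$. By the classical trichotomy for Cartan matrices, $\rho<2$, $=2$, $>2$ in the Dynkin, extended Dynkin, and remaining cases respectively; in our situation $\rho>2$, so $Cv=(2-\rho)v$ has strictly negative entries and $(v,v)=(2-\rho)|v|^2<0$. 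In particular, the open cone
\[
K=\{\alpha\in\RR^{Q_0}_{>0}\mid C\alpha<0\}
\]
is non-empty, rational, and of full dimension $|Q_0|$.

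Finally, fix a rational $v_0\in K$ with $(v_0,v_0)<0$ and a small closed ball $\bar B\subset K$ around $v_0$, chosen so that $(w,w)<-c$ for some $c>0$ and every $w\in\bar B$. Since primitive lattice points in $\RR^{|Q_0|}$ have positive density ($1/\zeta(|Q_0|)$), each scaled ball $t\bar B$ contains primitive integer points $\alpha$ for $t$ large; any such $\alpha$ lies in $tK=K\subset F(Q)$, is indivisible, and satisfies
\[
(\alpha,\alpha)=t^2(\alpha/t,\alpha/t)\le -ct^2\to-\infty
\]
as $t\to\infty$. The main obstacle is this indefinite case; its two intertwined challenges, namely reaching the interior of the fundamental cone with strictly negative self-pairing, and then extracting infinitely many indivisible lattice points along which the self-pairing diverges, are resolved by Perron--Frobenius and the lattice-density argument respectively.
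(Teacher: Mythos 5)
Your treatment of the Dynkin and extended Dynkin cases is correct and matches what the paper leaves to the ``well known'' arguments. In the wild case you take a genuinely different route. The paper cites Kac's Lemma~1.2 for the existence of $\alpha\in\NN_{>0}^{Q_0}$ with $(e_i,\alpha)<0$ for every $i$, and then finishes by pure counting: the identity $(\beta,\beta)=\sum_i\beta_i(e_i,\beta)<-\sum_i\beta_i$ on the interior of the cone shows that only finitely many interior lattice points satisfy $(\beta,\beta)>-N$, while the cone, having dimension $|Q_0|\ge 2$, contains infinitely many indivisible lattice points. You instead reprove Kac's existence statement via Perron--Frobenius, and obtain indivisibility from the density $1/\zeta(|Q_0|)$ of primitive lattice points. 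Both routes work; the paper's final step is more elementary, while your Perron--Frobenius construction produces the cone-interior vector explicitly.

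There is, however, a gap in your Perron--Frobenius step when $Q$ has loops. You assert ``its diagonal equals $2$'', but if vertex $i$ carries $\ell_i\ge 1$ loops then $C_{ii}=(e_i,e_i)=2-2\ell_i<2$, so $C$ is not a generalized Cartan matrix and the ``classical trichotomy for Cartan matrices'' you invoke does not literally apply. Such quivers do fall under the lemma's hypothesis: any connected quiver with at least two vertices and at least one loop is automatically neither Dynkin nor extended Dynkin. The conclusion $\rho>2$ remains true and the gap is easily patched --- a loop at $i$ gives $e_i^{T}(2I-C)e_i=2\ell_i\ge 2$, hence $\rho\ge 2$ by the Rayleigh quotient, and $\rho=2$ would force $e_i$ to be the Perron eigenvector of the irreducible symmetric matrix $2I-C$, impossible since that eigenvector is strictly positive and $|Q_0|\ge 2$ --- but as written your trichotomy citation only covers the loop-free case.
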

\begin{proof}
The first two cases are well known. So supposed that $Q$ is connected and not Dynkin or extended Dynkin and $|Q_0|>1$
By
\cite[Lemma 1.2]{Kac1}  there exists $\alpha\in \NN^{Q_0}$ such that
all $\alpha_i>0$ and  $(e_i,\alpha)<0$ for each $i$. In other
words the cone
\[
C(Q)=\{\beta\in \RR^{Q_0}\mid \forall i: \beta_i\ge 0, (e_i,\beta)\le 0\}.
\]
had non empty interior and is of dimension
$|Q_0|>1$. So $\operatorname{int} C(Q)\cap \ZZ^{Q_0}$
 contains infinitely many indivisible elements (for example take the minimal elements in smaller and smaller subcones which are disjunct except for 0). 

Now if $\beta\in \operatorname{int} C(Q)\cap \ZZ^{Q_0}$ then $(\beta,\beta)=\sum_i\beta_i (\beta,e_i)<-\sum_i\beta_i$. So for any $N>0$ the set
\[
\{\beta\in \operatorname{int} C(Q)\cap \ZZ^{Q_0}\mid (\beta,\beta)>-N\}
\]
is finite. This shows that $(\beta,\beta)\r -\infty$.
\end{proof}
\begin{remarks} It follows that if $Q$ is a connected ``wild'' quiver
  (i.e.\ not Dynkin or extended Dynkin) which is not the two loop quiver
  then $F(Q)$ contains an indivisible vector $\alpha$ such that
  $(\alpha,\alpha)\le -4$. This fact will be used below.
\end{remarks}
\begin{propositions} \label{ref-2.5.7-17} \cite{Kac1,King,Schofield99} Let $\alpha$ be an indivisible dimension vector in $F(Q)$. Then there exists some
$\lambda$ satisfying $\lambda\cdot \alpha=0$ such
that $M_{\alpha,\lambda,Q}$ is non-empty. In that case $M_{\alpha,\lambda,Q}$  has dimension $-(1/2)(\alpha,\alpha)+1$.
\end{propositions}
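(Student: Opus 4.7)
The plan is to treat the two assertions separately: existence of some $\lambda$ for which $M_{\alpha,\lambda,Q}\neq\emptyset$, and then the dimension count, which is the standard GIT calculation once stable representations are known to exist.

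For nonemptiness I would argue as follows. Since $\alpha\in F(Q)$, Kac's theorem (\cite{Kac1}) guarantees that $\alpha$ is a positive imaginary root and that absolutely indecomposable $kQ$-representations of dimension vector $\alpha$ exist; in fact they form an irreducible family inside $R(Q,\alpha)=\bigoplus_{a\in Q_1}\Hom(k^{\alpha_{t(a)}},k^{\alpha_{h(a)}})$ of dimension $1-\langle\alpha,\alpha\rangle+\dim GL(\alpha)-1$. Because $\alpha$ is indivisible, one can choose $\lambda\in\ZZ^{Q_0}$ with $\lambda\cdot\alpha=0$ such that the hyperplane $\lambda\cdot(-)=0$ meets the set of subdimension vectors $\{\beta\in \NN^{Q_0}\mid 0\le\beta\le\alpha\}$ only in $\{0,\alpha\}$. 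For such a generic $\lambda$, semistability and stability coincide on dimension vector $\alpha$. It remains to exhibit a $\lambda$-stable representation: here I would invoke Schofield's theorem on general subrepresentations (\cite{Schofield99}), which computes, for a generic representation $V$ of dimension vector $\alpha$, the set of dimension vectors $\beta$ such that $V$ has a subrepresentation of dimension vector $\beta$. Choosing $\lambda$ generic in the hyperplane $\lambda\cdot\alpha=0$ but satisfying $\lambda\cdot\beta>0$ for every such general subdimension vector $\beta\neq 0,\alpha$ makes the generic $V$ of dimension vector $\alpha$ stable; combined with King's theorem (\cite{King}), this gives that $M_{\alpha,\lambda,Q}$ is a nonempty scheme of finite type.

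For the dimension calculation, I would use the standard GIT setup: $PGL(\alpha)=GL(\alpha)/\GG_m$ acts freely on the open subscheme $R(Q,\alpha)^{\lambda\text{-st}}\subset R(Q,\alpha)$ of $\lambda$-stable points (freeness follows from Proposition \ref{ref-2.5.2-11}, since stable points have endomorphism ring equal to their base field), and $M_{\alpha,\lambda,Q}$ is the geometric quotient. Hence
\[
\dim M_{\alpha,\lambda,Q}=\dim R(Q,\alpha)-\dim PGL(\alpha)=\sum_{a\in Q_1}\alpha_{t(a)}\alpha_{h(a)}-\sum_i\alpha_i^2+1=-\langle\alpha,\alpha\rangle+1,
\]
and since $(\alpha,\alpha)=2\langle\alpha,\alpha\rangle$ this equals $-(1/2)(\alpha,\alpha)+1$.

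The main obstacle is the existence step: showing that for a sufficiently generic $\lambda$ in the hyperplane $\lambda\cdot\alpha=0$ a generic representation is actually $\lambda$-stable (and not merely that semistability is forced to coincide with stability). This is the content of Kac--Schofield: one needs to know that a generic representation of dimension $\alpha\in F(Q)$ is absolutely indecomposable, and that its general subrepresentation data can be separated from $\alpha$ by a choice of $\lambda$. The dimension count is then formal.
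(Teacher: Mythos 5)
Your argument is essentially the paper's argument; the paper simply cites two black boxes that you unpack a little further. The paper says: for $\alpha$ in the fundamental region the generic representation is a \emph{Schur} representation (i.e.\ $\End=k$), by Kac, and a Schur representation is $\lambda$-stable for some $\lambda$ with $\lambda\cdot\alpha=0$, by \cite[Theorem 6.1]{Schofield92}. What you do in your ``existence'' step is, in effect, sketch the proof of that Schofield theorem: separate the dimension vectors of general subrepresentations of $\alpha$ from $\alpha$ itself by a linear functional $\lambda$. That is the right mechanism and it is not a genuinely different route. Your dimension count via $\dim R(Q,\alpha)-\dim PGL(\alpha)$ is arithmetically identical to the paper's $\dim\Ext^1_{kQ}(V,V)=-\langle\alpha,\alpha\rangle+1$ for generic $V$.

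One inaccuracy worth flagging: you cite Kac to get that the generic representation of dimension $\alpha\in F(Q)$ is \emph{absolutely indecomposable}, and then run a stability argument. But $\lambda$-stability (in characteristic zero) forces $\End(V)=k$, i.e.\ $V$ must be Schur, and ``absolutely indecomposable'' is a priori weaker --- it only gives a local endomorphism ring, which may have nilpotents. If the generic representation merely had local non-reduced endomorphisms, no choice of $\lambda$ could make it stable, and the ``separate the general subdimension vectors from $\alpha$'' step would fail (there is no a priori reason the linear system $\lambda\cdot\alpha=0$, $\lambda\cdot\beta>0$ is consistent without the Schur hypothesis). The correct Kac input, and the one the paper uses, is the stronger statement that for indivisible $\alpha$ in the fundamental region the generic representation is Schur. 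With that in hand your argument is fine, and it is what Schofield's Theorem 6.1 records.

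Two smaller remarks. First, your appeal to Proposition~\ref{ref-2.5.2-11} to get freeness of the $PGL(\alpha)$-action is circular-feeling and in any case overkill: freeness of the $PGL(\alpha)$-action on the stable locus is immediate from the definition of stability (the stabilizer of a stable point in $GL(\alpha)$ consists of scalars), with no need for the moduli-theoretic endomorphism computation. Second, your preliminary step of choosing $\lambda$ so that its kernel meets $\{0\le\beta\le\alpha\}$ only in $\{0,\alpha\}$ (to force semistable $=$ stable) is not needed once you have strict inequalities $\lambda\cdot\beta>0$ for all general proper subdimension vectors; it costs nothing, but it is redundant.
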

\begin{proof}
Since $\alpha\in F(Q)$, the generic $Q$-representation  with dimension vector $\alpha$ is a Schur representation (i.e.\ it has only trivial endomorphisms) \cite{Kac1}. Therefore it is stable for suitable $\lambda$ by
\cite[Theorem 6.1]{Schofield92}. The dimension maybe computed using the standard fact that $\dim M_{\alpha,Q}=\dim \Ext^1_{kQ}(V,V)=-\langle \udim V,\udim V\rangle+1$
for $V$ generic. 
\end{proof}

\section{Moduli spaces of vector bundles on curves}
\label{ref-3-18}
In this section we prove an analogue of Proposition \ref{ref-2.5.2-11} for vector bundles on curves.
Below $X$ is a smooth projective curve over $k$ of genus $g$.
The theory of moduli spaces of vector bundles on curves is well known, so we will not repeat it here (see e.g.\ \cite{newstead}).

Given $r,d$ such that $\gcd(r,d)=1$, 
the functor $\Mscr_{r,d}$ of families of stable vector bundles of rank $r$
and degree $d$ on $X$ has a fine moduli space $M_{r,d}$  such that
\[
\dim M_{r,d}=1+r^2(g-1).
\]
Let $\Vscr_{r,d}$ be the universal bundle on $M_{r,d}$. We will prove the following analogue of 
Proposition \ref{ref-2.5.2-11}
\begin{proposition} \label{ref-3.1-19} Let $x\in M_{r,d}$ and put $V_x=i_x^\ast(\Vscr_{r,d})$. Let $p:X_{k(x)}\r X$
be the map obtained by base extension from the structure map $\Spec k(x)\r \Spec k$. Then
\[
\End_X(p_\ast V_x)=k(x).
\]
\end{proposition}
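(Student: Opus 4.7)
The plan is to mimic the proof of Proposition~\ref{ref-2.5.2-11}, using Lemma~\ref{ref-2.2.1-3} as a template.

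The inclusion $k(x)\hookrightarrow\End_X(p_*V_x)$ is automatic: scalar multiplication by $k(x)$ on $V_x$, arising from its $\Oscr_{X_{k(x)}}$-module structure, gives $\Oscr_X$-linear endomorphisms. The content is the reverse inclusion.

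First, I would localize near $x$ on the moduli space. Choose an affine open $U=\Spec R\subset M_{r,d}$ containing $x$, with $x$ corresponding to a prime $\mathfrak{p}\subset R$; write $O(x)=R/\mathfrak{p}$, so that $k(x)=\operatorname{Frac}(O(x))$. Let $\Vscr_U=\Vscr_{r,d}|_{X\times U}$, locally free of rank $r$, with fiber $V_x$ at $x$. By the fine moduli property together with the simplicity of stable bundles one has $\pi_{U,*}\uEnd(\Vscr_U)=\Oscr_U=R$, where $\pi_U\colon X\times U\to U$ is the projection. This plays the role of the Formanek--Azumaya structure of \S2.

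Following Lemma~\ref{ref-2.2.1-3}, the goal is to set up the chain
\[
\End_X(p_*V_x)=\End_{\Oscr_X\otimes_k R}(V_x)=\End_{\Oscr_X\otimes_k O(x)}(V_x)=\End_{\Oscr_X\otimes_k k(x)}(V_x)=\End_{X_{k(x)}}(V_x)=k(x).
\]
Here $V_x$ is viewed successively as a module over each ring in the chain via the natural quotients. The right-most equality is stability; the second-to-last is $\Oscr_X\otimes_k k(x)=\Oscr_{X_{k(x)}}$. The two middle equalities transcribe directly from the corresponding steps in the proof of Lemma~\ref{ref-2.2.1-3}: they use surjectivity of $R\twoheadrightarrow O(x)$ and the fact that $O(x)\hookrightarrow k(x)$ is a ring epimorphism (being a localization).

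The first equality, $\End_X(p_*V_x)=\End_{\Oscr_X\otimes_k R}(V_x)$, is the analog of $\End_A(V_x)=\End_{A_n}(V_x)$ in Lemma~\ref{ref-2.2.1-3} and is the crucial step. Since $\Oscr_X\hookrightarrow\Oscr_X\otimes_k R$ is not surjective (unlike $A\twoheadrightarrow A_n$ in the quiver case), the surjection argument is unavailable. Instead, the idea is to show that any $\Oscr_X$-linear endomorphism $\phi$ of $p_*V_x$ lifts canonically to an $\Oscr_X\otimes_k R$-linear endomorphism $\tilde\phi$ of the family $\Vscr_U$ on $X\times U$; then $\tilde\phi\in\End_{X\times U}(\Vscr_U)=R$ by the fine moduli identification, and the image of $\tilde\phi$ under $R\to k(x)$ recovers $\phi$.

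The main obstacle is this lifting step. Passing from an $\Oscr_X$-linear endomorphism of the fiber $p_*V_x$ to an $R$-linear endomorphism of the whole family $\Vscr_U$ is essentially a rigidity statement that must use the projectivity of $X$ (for cohomological finiteness and to trivialize patching issues) together with the identification $\pi_{U,*}\uEnd(\Vscr_U)=R$. I expect the argument to proceed by a formal deformation over the local ring $\Oscr_{M_{r,d},x}$, lifting $\phi$ order-by-order along the powers of $\mathfrak{p}$ and then globalising.
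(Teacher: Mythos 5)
You have correctly identified where the difficulty lies, but the step you isolate as the ``main obstacle'' is the entire content of the proposition, and your proposal does not actually resolve it. Precisely because the $R$-action on $V_x$ factors through $R\twoheadrightarrow O(x)\hookrightarrow k(x)$, proving $\End_X(p_*V_x)=\End_{\Oscr_X\otimes_k R}(V_x)$ is, modulo the innocuous last two links of your chain, the same as proving that every $\Oscr_X$-linear endomorphism of $p_*V_x$ is automatically $k(x)$-linear -- which is what you wanted to show in the first place. In the quiver case the analogous first equality $\End_A(V_x)=\End_{A_n}(V_x)$ comes for free from surjectivity of $A\to A_n$; here, as you note, $\Oscr_X\to\Oscr_X\otimes_k R$ goes the wrong way, so there is no such shortcut. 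The deformation-theoretic lifting you sketch at the end is not established, and it is unclear that it can be made to work: an $\Oscr_X$-linear endomorphism of $p_*V_x$ that is \emph{not} $k(x)$-linear does not obviously assemble into a family over any neighbourhood of $x$ in $M_{r,d}$, so there is nothing to lift order by order. So this is a genuine gap, not merely a point left to the reader.

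The paper proceeds by a different, and in a sense orthogonal, reduction: it converts the sheaf problem into a module problem so that Lemma~\ref{ref-2.2.1-3} literally applies, rather than being imitated. The key inputs are Faltings's criterion that a bundle $\Escr$ is semi-stable iff there is a nonzero bundle $\Fscr$ with $\Fscr\perp\Escr$, together with Schofield's results that, for $\Fscr\in\coh(X)$ of full support, the right perpendicular category $\Fscr^\perp\subset\Qcoh(X)$ is a Grothendieck category with a \emph{small projective generator} $\Pscr$. Setting $A=\End_X(\Pscr)$, one gets an equivalence $\Fscr^\perp\xrightarrow{\ \sim\ }\Mod(A)$, $\Escr\mapsto\Hom_X(\Pscr,\Escr)$, which is the curve analogue of the universal-localisation equivalence $W^\perp\cong\Mod((kQ)_\delta)$ in \S\ref{ref-2.5-10}. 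Choosing $\Fscr$ so that $x\in M_{r,d,\Fscr}$, the object $p_*V_x$ lies in $\Fscr^\perp$ (by adjunction, since $V_x\in W^\perp$ over $X_{k(x)}$), and the equivalence turns $\End_X(p_*V_x)$ into $\End_A(V_x)$. Now $A$ is an honest $k$-algebra, the identification $\Rep_{r,A}\cong\Mscr_{r,d,\Fscr}$ holds, $\Ascr_{r,A}$ is split by Lemma~\ref{ref-2.3.1-5}, and Lemma~\ref{ref-2.2.1-3} finishes the argument by the chain of quotients and epimorphisms of \emph{algebras}. The first equality that is problematic in your version is harmless here because $A\to A_n$ is surjective. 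In short: rather than trying to make the geometric rings behave like the algebra rings, the paper replaces the curve by an algebra via the perpendicular category, after which the earlier lemma applies verbatim.
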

To prove Proposition \ref{ref-3.1-19} 
we will use the
following analogue of Theorem \ref{ref-2.5.4-14},
which is  a fundamental result by Faltings:
\begin{theorem} \cite{Faltings}
Let $X$ be a smooth projective curve. A bundle $\Escr$ on $X$ is semi-stable if there exists a non-zero
bundle $\Fscr$ such $\Fscr\perp\Escr$.
\end{theorem}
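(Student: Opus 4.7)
The plan is to establish the stated direction (existence of a nonzero $\Fscr$ with $\Fscr \perp \Escr$ forces $\Escr$ to be semi-stable) by a direct slope-theoretic argument. This is the elementary half of Faltings' full theorem; the deeper converse, that a semi-stable $\Escr$ admits such an $\Fscr$, is not needed for the implication as stated.

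First I would argue by contradiction: assume $\Escr$ is not semi-stable, so there is a short exact sequence of vector bundles
\[
0 \to \Escr' \to \Escr \to \Escr'' \to 0
\]
with $\mu(\Escr') > \mu(\Escr'')$. Applying $\Hom_X(\Fscr, -)$ and using that $\Hom_X(\Fscr, \Escr) = 0 = \Ext^1_X(\Fscr, \Escr)$, the long exact cohomology sequence forces $\Hom_X(\Fscr, \Escr') = 0$ (as a subspace of $\Hom_X(\Fscr, \Escr)$) and $\Ext^1_X(\Fscr, \Escr'') = 0$ (as a quotient of $\Ext^1_X(\Fscr, \Escr)$). Consequently
\[
\chi(\Fscr, \Escr') \le 0 \quad \text{and} \quad \chi(\Fscr, \Escr'') \ge 0.
\]

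Next I would invoke Riemann-Roch on the genus $g$ curve $X$: for bundles $\Fscr, \Gscr$ of ranks $r_{\Fscr}, r_{\Gscr}$ and degrees $d_{\Fscr}, d_{\Gscr}$,
\[
\chi(\Fscr, \Gscr) = r_{\Fscr} d_{\Gscr} - r_{\Gscr} d_{\Fscr} + r_{\Fscr} r_{\Gscr}(1-g) = r_{\Fscr} r_{\Gscr}\bigl(\mu(\Gscr) - \mu(\Fscr) - (g-1)\bigr).
\]
Applied to $\Escr'$ and $\Escr''$, and using that all ranks are positive, the two Euler-characteristic inequalities translate directly into the slope inequalities
\[
\mu(\Escr') \le \mu(\Fscr) + (g-1) \le \mu(\Escr''),
\]
contradicting $\mu(\Escr') > \mu(\Escr'')$. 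Hence $\Escr$ must be semi-stable.

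The argument is essentially numerical, and the one thing requiring care is bookkeeping of signs in Riemann-Roch: once the long exact sequence of $\Hom_X(\Fscr, -)$ is in place, additivity of $\chi$ on short exact sequences together with the Riemann-Roch formula converts the vanishing/non-vanishing conditions into slope inequalities which immediately preclude a destabilizing subbundle. No deep input from Faltings' actual construction is needed for this implication, which is why the proof fits onto a single page.
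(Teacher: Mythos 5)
Your argument is correct. The paper itself gives no proof of this statement --- it is quoted from Faltings as a black box --- so there is nothing to compare against line by line; but your slope-theoretic derivation is the standard (and complete) proof of the implication as literally stated. All the steps check out: a destabilizing subsheaf may be saturated so that $0\to\Escr'\to\Escr\to\Escr''\to 0$ is an exact sequence of bundles with $\mu(\Escr')>\mu(\Escr)>\mu(\Escr'')$; left exactness of $\Hom_X(\Fscr,-)$ gives $\Hom_X(\Fscr,\Escr')=0$, and vanishing of $\Ext^2$ on a curve gives $\Ext^1_X(\Fscr,\Escr'')=0$; and Riemann--Roch in the form $\chi(\Fscr,\Gscr)=r_\Fscr r_\Gscr\bigl(\mu(\Gscr)-\mu(\Fscr)-(g-1)\bigr)$ turns the two sign conditions into $\mu(\Escr')\le\mu(\Fscr)+(g-1)\le\mu(\Escr'')$, the desired contradiction.

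One caveat worth recording: you correctly identify your direction as the elementary half, but note that the way the theorem is actually deployed in Section 3 --- to conclude that the open sets $M_{r,d,\Fscr}$ cover $M_{r,d}$, i.e.\ that every (semi)stable bundle admits \emph{some} nonzero $\Fscr$ with $\Fscr\perp\Escr$ --- relies on the converse, which is the genuinely hard content of Faltings' theorem and cannot be obtained by this kind of numerical argument. So your proof establishes the statement as printed, but not the statement the surrounding text needs.
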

As before $\Fscr\perp\Escr$ if $\Hom_X(\Fscr,\Escr)=0$, $\Ext^1_X(\Fscr,\Escr)=0$. 
Given $\Fscr\in \coh(X)$, we define as in the quiver
case a subfunctor $\Mscr_{r,d,\Fscr}$ of $\Mscr_{r,d}$ consisting of those families in $\Mscr_{r,d}$
that are right orthogonal to $\Fscr$. This subfunctor is representable by
an open subset of $M_{r,d}$, which we denote by $M_{r,d,\Fscr}$.

Let $\Fscr\in \coh(X)$ be such that $\operatorname{\Supp} \Fscr=X$.  Put
\[
\Fscr^\perp=\{\Escr\in \Qcoh(X)\mid
\Hom_X(\Fscr,\Escr)=\Ext^1(\Fscr,\Escr)=0\}.
\]
It is easy to see that $\Fscr^\perp$ is an abelian subcategory of $\Qcoh(X)$ closed
under direct sums. So it is in particular a Grothendieck category. We will now use
some results by Aidan Schofield, which are unfortunately not officially published.
Proofs can be found in \cite{schofieldup}.
\begin{proposition} \cite{schofieldup}
The inclusion $\Fscr^\perp\subset \Qcoh(X)$ has a left adjoint.
\end{proposition}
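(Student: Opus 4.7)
The plan is to show $\Fscr^\perp$ is reflective in $\Qcoh(X)$ by verifying that it is closed under limits and filtered colimits in $\Qcoh(X)$, and then invoking an adjoint functor theorem; an explicit construction via universal extensions is available as an alternative.

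Since $\Fscr$ is coherent on the smooth curve $X$, it is finitely presented in $\Qcoh(X)$ and admits a two-term locally free resolution $0 \to \Kscr \to \Pscr \to \Fscr \to 0$. Using this, I would check the following closure properties. First, $\Fscr^\perp$ is closed under products in $\Qcoh(X)$, by a Baer-sum argument identifying $\Ext^1_X(\Fscr, \prod_i \Escr_i)$ with $\prod_i \Ext^1_X(\Fscr, \Escr_i)$. Second, $\Fscr^\perp$ is closed under kernels: for $f : \Escr_1 \to \Escr_2$ in $\Fscr^\perp$, the long exact sequence attached to $0 \to \ker f \to \Escr_1 \to \operatorname{im} f \to 0$ and the injection $\Hom_X(\Fscr, \operatorname{im} f) \hookrightarrow \Hom_X(\Fscr, \Escr_2) = 0$ force $\Hom_X(\Fscr, \ker f) = \Ext^1_X(\Fscr, \ker f) = 0$. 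Third, $\Fscr^\perp$ is closed under filtered colimits, since $\Fscr$ is finitely presented in the Grothendieck category $\Qcoh(X)$, so both $\Hom_X(\Fscr,-)$ and $\Ext^1_X(\Fscr,-)$ commute with filtered colimits. In particular, $\Fscr^\perp$ is closed under all small limits taken in $\Qcoh(X)$.

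Because $\Qcoh(X)$ is locally finitely presentable, this is enough to conclude by a standard reflection theorem of Ad\'amek--Rosick\'y that the inclusion $\Fscr^\perp \hookrightarrow \Qcoh(X)$ admits a left adjoint. Alternatively, the reflection of $\Escr \in \Qcoh(X)$ may be constructed explicitly by alternating two moves transfinitely: first quotient by the image of the evaluation $\Fscr \otimes_k \Hom_X(\Fscr, \Escr) \to \Escr$ to kill $\Hom_X(\Fscr,-)$; then form the universal extension
\[
0 \to \Escr \to \Escr' \to \Fscr \otimes_k \Ext^1_X(\Fscr, \Escr) \to 0,
\]
classified by the identity under the adjunction $\Ext^1_X(\Fscr \otimes_k V, \Escr) = \Hom_k(V, \Ext^1_X(\Fscr, \Escr))$, to kill $\Ext^1_X(\Fscr,-)$; then take the filtered colimit over iterations.

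The main technical difficulty is that each of the two moves may reintroduce the obstruction killed by the other, so termination in finitely many steps cannot be expected; stabilization at some transfinite stage $\Escr^{(\infty)}$ instead follows from the compactness properties above together with a cardinality estimate on the iterates. The universal property of $\Escr^{(\infty)}$ is then immediate: for any $M \in \Fscr^\perp$, the obstructions to factoring a morphism $\Escr \to M$ across a quotient step and across a universal-extension step live respectively in $\Hom_X(\Fscr, M)$ and $\Ext^1_X(\Fscr, M)$, both of which vanish by assumption on $M$.
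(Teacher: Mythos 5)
The paper gives no proof of this proposition; it is cited from Schofield's unpublished notes, so there is no in-text argument to compare against. On its own terms your proposal has two real gaps. For the adjoint-functor-theorem route, closure under products is the crux, and the justification you give does not hold up: the two-term locally free resolution $0 \to \Kscr \to \Pscr \to \Fscr \to 0$ does not compute $\Ext^*_X(\Fscr,-)$ via $\Hom(\Pscr,-)\to\Hom(\Kscr,-)$, because $\Pscr$ and $\Kscr$ are not projective objects of $\Qcoh(X)$ for a projective curve (there are no nonzero projective quasi-coherent sheaves, and $\Ext^1_X(\Pscr,-)\neq 0$ in general). Moreover, products in $\Qcoh(X)$ over projective $X$ pass through the coherator and are not obviously exact, which is exactly what the Baer-sum identification $\Ext^1_X(\Fscr,\prod_i\Escr_i)\cong\prod_i\Ext^1_X(\Fscr,\Escr_i)$ would require. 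Without control of products, the Ad\'amek--Rosick\'y reflection theorem does not apply.

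The transfinite construction is the right line of attack (this is the standard treatment of perpendicular categories, compare Geigle--Lenzing), but the convergence claim is where all the content lies and you have left it blank, and the situation is in fact worse than your ``main technical difficulty'' suggests: each move fails to finish its own job even in one pass. Quotienting by the trace $\operatorname{im}(\Fscr\otimes_k\Hom_X(\Fscr,\Escr)\to\Escr)$ does not in general kill $\Hom_X(\Fscr,-)$, since maps from $\Fscr$ into the quotient need not lift. And since $\Ext^1_X(\Fscr,\Fscr)$ need not vanish, the universal extension $0\to\Escr\to\Escr'\to\Fscr\otimes_k\Ext^1_X(\Fscr,\Escr)\to 0$ \emph{regenerates} $\Ext^1$: using hereditarity of $\Qcoh(X)$ and compactness of $\Fscr$, the long exact sequence gives a surjection $\Ext^1_X(\Fscr,\Escr')\twoheadrightarrow\Ext^1_X(\Fscr,\Fscr)\otimes_k\Ext^1_X(\Fscr,\Escr)$. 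So the transfinite passage really cannot be avoided, and ``a cardinality estimate on the iterates'' names rather than supplies the needed argument. One must pick a regular cardinal $\kappa$ for which both $\Hom_X(\Fscr,-)$ and $\Ext^1_X(\Fscr,-)$ commute with $\kappa$-filtered colimits, check that the chain of iterates is $\kappa$-continuous (which requires care at successor stages, where a universal extension rather than a colimit is formed), and deduce vanishing of the two obstructions at stage $\kappa$. As written, none of this is done.
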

Denote the left adjoint to $\Fscr^\perp\r \Qcoh(X)$ by $L$.
Let $p\in X$. There exists
an epimorphism $\phi:\Fscr\r \Oscr_p$. Put $\Fscr'=\ker \phi$, $\Pscr\overset{\text{}}{=}L(\Fscr')$.
\begin{proposition}\cite{schofieldup}
The object $\Pscr$ is a small projective generator for the category~$\Fscr^\perp$. If $\Escr\in \Fscr^\perp$ then $\Hom_X(\Pscr,\Escr)$
is finite dimensional if and only if $\Escr$ is coherent.
\end{proposition}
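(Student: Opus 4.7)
The plan is to verify each of the four claims --- projective, small, generator, and the finite-dimensionality criterion --- by systematically combining the adjunction defining $L$ with the long exact sequence of $0 \to \Fscr' \to \Fscr \to \Oscr_p \to 0$, exploiting at each step that $X$ is a smooth projective curve so that $\Qcoh(X)$ has global dimension one. By the adjunction, $\Hom_{\Fscr^\perp}(\Pscr,\Escr) = \Hom_{\Qcoh(X)}(\Fscr',\Escr)$ for every $\Escr \in \Fscr^\perp$. Applying $\Hom_{\Qcoh(X)}(-,\Escr)$ to the short exact sequence and using that $\Hom(\Fscr,\Escr) = \Ext^1(\Fscr,\Escr) = 0$ (by $\Escr \in \Fscr^\perp$) as well as $\Ext^i(\Oscr_p,\Escr) = 0$ for $i\ge 2$ (by smoothness of $X$), one obtains
\[
\Hom(\Fscr',\Escr) \cong \Ext^1(\Oscr_p,\Escr) \quad\text{and}\quad \Ext^1(\Fscr',\Escr) = 0.
\]
The vanishing of $\Ext^1(\Fscr',-)$ on $\Fscr^\perp$, combined with the exactness of the embedding $\Fscr^\perp\hookrightarrow\Qcoh(X)$, gives projectivity of $\Pscr$ in $\Fscr^\perp$. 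For smallness, $\Fscr' = \ker\phi$ is coherent on the Noetherian scheme $X$, hence compact in $\Qcoh(X)$; compactness of $\Fscr$ then forces $\Fscr^\perp$ to be closed under filtered colimits in $\Qcoh(X)$, so these colimits agree in both categories and $\Hom_{\Fscr^\perp}(\Pscr,-)$ commutes with them.

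The main obstacle will be the generator property: for any non-zero $\Escr \in \Fscr^\perp$ one must show $\Ext^1(\Oscr_p,\Escr)\ne 0$. A first reduction is that such $\Escr$ must be torsion-free: the full-support hypothesis on $\Fscr$ guarantees that $\Fscr$ has a locally free direct summand of positive rank (the ``vector-bundle part'' of its splitting on the smooth curve), and this summand admits non-zero maps to any non-zero torsion subobject of $\Escr$, ruling out torsion in $\Escr$. For $\Escr$ torsion-free, the sequence $0\to\Escr\to\Escr(p)\to\Escr\otimes\Oscr_p\to 0$ identifies $\Ext^1(\Oscr_p,\Escr)$ with $\coker(\Gamma(\Escr)\to\Gamma(\Escr(p)))$, and it remains to exclude degenerate objects where this cokernel vanishes (such as sheaves of ``generic-fibre'' type, for which $\Escr\otimes\Oscr_p=0$); the full-support hypothesis on $\Fscr$ again rules these out by producing non-zero $\Hom$ from $\Fscr$. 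A clean version of this analysis is delivered by Schofield's theory of perpendicular categories.

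For the finite-dimensionality criterion, coherence of $\Escr$ yields finite-dimensionality of $\Hom(\Fscr',\Escr)$ directly from properness of $X$. For the converse, having established $\Pscr$ as a small projective generator one obtains a Morita-style equivalence $\Fscr^\perp \simeq \Mod(\End(\Pscr)^{\mathrm{op}})$ sending $\Escr$ to $\Hom(\Pscr,\Escr)$; writing $\Escr$ as the filtered colimit of its coherent subsheaves in $\Qcoh(X)$ and reflecting each into $\Fscr^\perp$ via $L$, smallness of $\Pscr$ forces the colimit computing $\Hom(\Pscr,\Escr)$ to stabilise whenever the latter is finite-dimensional, from which coherence of $\Escr$ follows.
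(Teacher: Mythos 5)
The paper gives no proof of this proposition: it is stated as a citation to Schofield's unpublished notes \cite{schofieldup}, so there is nothing in the paper to compare against directly. Your proposal is therefore best judged on its own merits.

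The initial computations are correct and clean. Using the adjunction $\Hom_{\Fscr^\perp}(\Pscr,\Escr)=\Hom_X(\Fscr',\Escr)$ and the long exact sequence coming from $0\to\Fscr'\to\Fscr\to\Oscr_p\to 0$, together with $\Hom_X(\Fscr,\Escr)=\Ext^1_X(\Fscr,\Escr)=0$ and $\Ext^2_X(\Oscr_p,\Escr)=0$, you correctly get $\Hom_X(\Fscr',\Escr)\cong\Ext^1_X(\Oscr_p,\Escr)$ and $\Ext^1_X(\Fscr',\Escr)=0$; projectivity of $\Pscr$ in the extension-closed abelian subcategory $\Fscr^\perp$ follows, and smallness follows from the compactness of the coherent sheaves $\Fscr$ and $\Fscr'$, which makes $\Fscr^\perp$ closed under filtered colimits in $\Qcoh(X)$ and $\Hom_X(\Fscr',-)$ continuous. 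Your reduction to torsion-free $\Escr$, exploiting that $\operatorname{Supp}\Fscr=X$ forces $\Fscr$ to have a locally free summand of positive rank, is also sound.

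There are, however, two genuine problems. First, for the generator property your key identification is wrong as stated: for torsion-free $\Escr$ one has $\Ext^1_X(\Oscr_p,\Escr)\cong\Escr\otimes_{\Oscr_X}\Oscr_p$ (the fibre at $p$), obtained from the local-to-global spectral sequence and the resolution $0\to\Oscr_X(-p)\to\Oscr_X\to\Oscr_p\to 0$; it is \emph{not} $\operatorname{coker}(\Gamma(\Escr)\to\Gamma(\Escr(p)))$, which is only the subspace of $\Ext^1$ killed by the connecting map to $H^1(\Escr)$ and may well vanish while $\Ext^1$ does not. With the correct local identification, what has to be excluded is that a nonzero torsion-free $\Escr\in\Fscr^\perp$ is $p$-divisible, i.e.\ $\Escr\cong\Escr(p)$; this needs an actual argument (for instance: a nonzero torsion-free $\Escr$ contains a nonzero coherent torsion-free, hence locally free, subsheaf $\Escr_0$, and $p$-divisibility gives $\Escr_0(np)\hookrightarrow\Escr$ for all $n$, so $\Hom_X(\Fscr_{\text{free}},\Escr)\supseteq\Gamma(\Fscr_{\text{free}}^\vee\otimes\Escr_0(np))\ne 0$ for $n\gg 0$, contradicting $\Escr\in\Fscr^\perp$). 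As written you essentially defer this step back to Schofield, which is the same thing the paper does.

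Second, the converse of the coherence criterion is not delivered. Knowing $\Hom_{\Fscr^\perp}(\Pscr,\Escr)=\varinjlim\Hom(\Pscr,L\Escr_i)$ stabilises does not directly yield coherence of $\Escr$: you would need $L\Escr_i$ to be coherent, or the inclusions $\Escr_i\hookrightarrow\Escr$ to eventually be isomorphisms, and neither follows from the stabilisation of $\Hom(\Pscr,-)$ alone, since $L$ need not preserve monomorphisms or coherence (indeed $\Pscr=L(\Fscr')$ itself is typically not coherent). Some argument that ties $\Hom(\Pscr,-)$ back to the coherent exhaustion in $\Qcoh(X)$, or that exhibits coherent objects of $\Fscr^\perp$ as exactly the Noetherian ones and uses the Morita equivalence, is missing.
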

Put $A=\End_X(\Pscr)$. It follows that  there is an equivalence of categories
\begin{equation}
\label{ref-3.1-20}
\Fscr^\perp\r \Mod(A):\Escr\mapsto \Hom_X(\Pscr,\Escr)
\end{equation}
which is an analogue to \eqref{ref-2.4-15}. With a similar argument as in the discussion thereafter, we obtain an isomorphism
of functors
\[
\Rep_{r,A}\cong \Mscr_{r,d,\Fscr}
\]
and hence by Lemma \ref{ref-2.3.1-5} it follows that $\Ascr_{r,A}$ is split and there is an isomorphism
\[
U_{r,A}\cong M_{r,d,\Fscr}.
\]
\begin{proof}[Proof of Proposition \ref{ref-3.1-19}] There exists some $\Fscr$ such that
$x\in M_{r,d,\Fscr}$. Now let the notation be as above. Then by \eqref{ref-3.1-20}
\[
\End_{X}(p_*V_x)=\End_{A}(V_x).
\]
It now suffices to invoke Lemma \ref{ref-2.2.1-3}.
\end{proof}

\section{Homological identities}
We recall the basic notions regarding Hochschild cohomology. We state the definitions
for graded algebras since this is the generality which we will need later. For a
comprehensive introduction, see \cite{Weibel}. 

Let $B$ be a graded
$k$-algebra and let $M$ be a graded $B$-$B$-bimodule. Construct the
(graded) \emph{Hochschild complex} as
$$C^i(B,M)=\Hom_{\gr(k)}(B^{\otimes i}, M)=\bigoplus_j \Hom_k(B^{\otimes i},M)_j$$
where $\Hom_k(B^{\otimes i},M)_j$ represents the set of $k$-multilinear maps $B^{\otimes i}\to M$ of degree~$j$. The differential is given by 
\begin{multline*}
d_{\text{Hoch}}(f)(r_0,\ldots, r_i)=r_0f(r_1,\ldots,r_i)-f(r_0r_1,\ldots,r_i)+\cdots +(-1)^{i-1} f(r_0,\ldots,r_{i-1}r_i)\\+(-1)^i f(r_0,\ldots,r_{i-1})r_i.
\end{multline*}
The Hochschild cohomology is defined as
$$\HH^i(B,M)=H^iC^*(B,M)$$
and has a decomposition $\HH^i(B,M)=\bigoplus_j \HH^i(B,M)_j$, where the elements of $\HH^i(B,M)_j$ are represented by the cocycles of degree $j$ with $i$ arguments.

The \emph{Hochschild dimension} of a $k$-algebra $B$ is the projective
dimension of $B$ as an $B\otimes_k
B^{\text{op}}$-module. Equivalently, it is the maximum $d$ such that
there exists an $B$-$B$-bimodule $M$ with $\HH^d(B,M)\neq 0$. If $B$
is a finitely generated field extension of $k$, then \cite{Osofsky}
the Hochschild dimension of $B$ equals its transcendence degree over
$k$.

\medskip

In this section we use Hochschild cohomology to compute $\Ext$-groups in base extended categories.
We first recall the following ``change of rings'' result:
\begin{proposition}
Let $\Cscr$ be a $k$-linear Grothendieck category and let $B$ be a $k$-algebra. Then for $M,N \in D(\Cscr_B)$ we have
\begin{equation}
\label{ref-4.1-21}
\RHom_{B\otimes_k B^{\circ}}(B,\RHom_\Cscr(M,N))=\RHom_{\Cscr_B}(M,N).
\end{equation}
\end{proposition}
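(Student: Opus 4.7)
The plan is to prove this derived identity by an injective resolution combined with a bar construction argument. At the underived level, unwinding the definition of $\Cscr_B$, one sees that $\Hom_\Cscr(M,N)$ carries a natural $B\otimes_k B^\circ$-module structure (left $B$-action via the action of $B$ on $N$, right $B$-action via that on $M$), and the subspace of bimodule maps $\Hom_{B\otimes_k B^\circ}(B,\Hom_\Cscr(M,N))$ is precisely $\Hom_{\Cscr_B}(M,N)$. The goal is to derive this elementary isomorphism.

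First I would choose an injective resolution $N\to I^\bullet$ in $\Cscr_B$. The forgetful functor $U:\Cscr_B\to\Cscr$ admits the left adjoint $F=-\otimes_k B$, which is exact since $B$ is flat over the field $k$. Hence $U$ preserves injectives, so $I^\bullet$ is simultaneously an injective resolution of $N$ in $\Cscr$, and $\RHom_\Cscr(M,N)$ is represented by the complex of bimodules $\Hom_\Cscr(M,I^\bullet)$. The identity then reduces to showing that this complex is degreewise $\Hom_{B\otimes_k B^\circ}(B,-)$-acyclic, with degree-zero value $\Hom_{\Cscr_B}(M,I^q)$; a standard double-complex/Grothendieck-spectral-sequence argument will then produce the desired isomorphism
$$
\RHom_{B\otimes_k B^\circ}(B,\Hom_\Cscr(M,I^\bullet))\simeq \Hom_{\Cscr_B}(M,I^\bullet)=\RHom_{\Cscr_B}(M,N).
$$

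For the acyclicity, I would use the monadic bar resolution $\mathrm{Bar}_\bullet(M)\to M$ in $\Cscr_B$, with $\mathrm{Bar}_n(M)=F((UF)^n M)=B\otimes_k B^{\otimes n}\otimes_k M$. The extra degeneracy $s(b_0\otimes\cdots\otimes b_n\otimes m)=1\otimes b_0\otimes\cdots\otimes b_n\otimes m$ is a $k$-linear contracting homotopy, so $\mathrm{Bar}_\bullet(M)\to M$ is an exact complex in $\Cscr_B$. Using the adjunction identity $\Hom_{\Cscr_B}(F(Y),I^q)=\Hom_\Cscr(Y,I^q)$ and comparing the bar differential with the Hochschild coboundary, one verifies that
$$
\Hom_{\Cscr_B}(\mathrm{Bar}_\bullet(M),I^q)\;\cong\;C^\bullet(B,\Hom_\Cscr(M,I^q)).
$$
Since $I^q$ is injective, $\Hom_{\Cscr_B}(-,I^q)$ is exact, and it converts the exact complex $\mathrm{Bar}_\bullet(M)\to M$ into a resolution of $\Hom_{\Cscr_B}(M,I^q)$. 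Reading off cohomology on the two sides gives $\HH^0(B,\Hom_\Cscr(M,I^q))=\Hom_{\Cscr_B}(M,I^q)$ and $\HH^{>0}(B,\Hom_\Cscr(M,I^q))=0$, which is the required acyclicity.

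The main thing to be careful with is the comparison of differentials making $\Hom_{\Cscr_B}(\mathrm{Bar}_\bullet(M),I^q)$ coincide with the Hochschild cochain complex of the bimodule $\Hom_\Cscr(M,I^q)$: one must track how the inner and outer $B$-multiplications in the bar complex correspond, under the adjunction, to the left action coming from $I^q$ and the right action coming from $M$ on $\Hom_\Cscr(M,I^q)$. Once this dictionary is in place, everything else is formal.
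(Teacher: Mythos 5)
Your argument is correct in essence, and it is a genuinely different route from the one in the paper. The paper deduces the Proposition from a more general adjunction identity (Lemma \ref{ref-4.2-22}) proved directly at the level of morphism complexes: one replaces $N$ by a fibrant (K-injective) complex over $\Cscr_B$, replaces $P=B$ by a cofibrant (K-projective) $B\otimes_k B^\circ$-complex, observes these remain fibrant/cofibrant after forgetting, and then checks the equality of $\underline{\Hom}$-complexes termwise on free/injective objects. Your proof instead runs the Grothendieck spectral sequence for the composite $\Hom_{\Cscr_B}(M,-)=\Hom_{B\otimes_k B^\circ}(B,\Hom_\Cscr(M,-))$, and verifies the acyclicity hypothesis for each injective $I^q$ by applying $\Hom_{\Cscr_B}(-,I^q)$ to the monadic bar resolution of $M$ and identifying the result with the Hochschild cochain complex $C^\bullet(B,\Hom_\Cscr(M,I^q))$. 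Both approaches secretly use the same bar resolution of $B$ (as a K-projective bimodule resolution in the paper; as the source of the Hochschild complex in yours), but your version stays closer to classical derived-functor technology while the paper's gets the more general statement with $P$ and $C$ for free.

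Two caveats worth making explicit. First, for unbounded $M,N$ you should take a K-injective (fibrant) resolution $N\to I^\bullet$ rather than merely a levelwise injective one; K-injectivity does transfer along the forgetful functor, since its left adjoint $-\otimes_k B$ is exact, so this is a wording issue rather than a gap. Second, when you "read off cohomology," you are totalising a right half-plane double complex $C^p(B,\Hom_\Cscr(M,I^q))$ with $p\ge 0$ whose rows are exact in positive degree; for unbounded $I^\bullet$ you need the product totalisation and the half-plane acyclic assembly lemma to conclude that $\Hom_{\Cscr_B}(M,I^\bullet)\to \operatorname{Tot}^\Pi$ is a quasi-isomorphism. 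With those adjustments the proof goes through, though it only yields the Proposition itself and not the paper's more general Lemma~\ref{ref-4.2-22}.
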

This proposition is an immediate consequence of the following lemma by setting  $C=B$, $P=B$:
\begin{lemma}
\label{ref-4.2-22}
Let $\Cscr$ be a $k$-linear Grothendieck category, and let $B$, $C$ be $k$-algebras. Then the following
identity holds:
\begin{equation}
\label{ref-4.2-23}
\RHom_{C\otimes_k B^{\circ}}(P,\RHom_\Cscr(M,N))=\RHom_{\Cscr_C}(P\Lotimes_B M,N)
\end{equation}
where $M\in D(\Cscr_B)$, $N\in D(\Cscr_C)$, $P\in D(C\otimes_k B^\circ)$.
\end{lemma}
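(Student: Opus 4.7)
The plan is to reduce the identity to the underived tensor-hom adjunction by choosing resolutions that are simultaneously compatible with both sides. At the ordinary (non-derived) level there is a natural isomorphism
\[
\Hom_{C\otimes_k B^\circ}(P,\Hom_\Cscr(M,N))\cong \Hom_{\Cscr_C}(P\otimes_B M,N),
\]
obtained by currying: a $C$-equivariant morphism $P\otimes_B M\to N$ in $\Cscr$ corresponds to a $C\otimes_k B^\circ$-linear map $P\to \Hom_\Cscr(M,N)$, where the right $B$-action on $\Hom_\Cscr(M,N)$ is induced by the $B$-object structure on $M$ and the left $C$-action by that on $N$. This identity extends componentwise (and by totalisation) to complexes.

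To upgrade the identity to the derived level I would make two resolution choices. First, replace $P$ by a K-projective resolution over $\mathfrak{a}:=C\otimes_k B^\circ$. A $k$-basis of $C$ gives a decomposition $\mathfrak{a}\cong\bigoplus B^\circ$ of $\mathfrak{a}$ as a right $B^\circ$-module, so $\mathfrak{a}$ is free as a right $B$-module; hence every K-projective complex of left $\mathfrak{a}$-modules is K-flat as a complex of right $B$-modules (since K-projectives are built from summands of free $\mathfrak{a}$-modules, which are free right $B$-modules), and $P\otimes_B M$ then represents $P\Lotimes_B M$. Second, replace $N$ by a K-injective resolution $I$ in $\Cscr_C$, which exists because $\Cscr_C$ is Grothendieck. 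The forgetful functor $\Cscr_C\to\Cscr$ admits the left adjoint $X\mapsto C\otimes_k X$, which is exact since $k$ is a field and $\Cscr$ is cocomplete $k$-linear; by the standard argument that a right adjoint with exact left adjoint preserves K-injectivity, $I$ is also K-injective as an object of $\Cscr$. Therefore $\Hom_\Cscr(M,I)$ represents $\RHom_\Cscr(M,N)$ not merely as a complex of $k$-modules, but as a complex of left $\mathfrak{a}$-modules.

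With these choices the left-hand side of \eqref{ref-4.2-23} becomes $\Hom_\mathfrak{a}(P,\Hom_\Cscr(M,I))$ — K-projectivity of $P$ letting us drop the derived decoration on the outer $\RHom$ — and the right-hand side becomes $\Hom_{\Cscr_C}(P\otimes_B M,I)$. The two complexes of $k$-modules are identified by the underived adjunction recalled above, yielding the desired isomorphism in the derived category. The most delicate step, and the one that cannot be avoided by a purely formal argument, is verifying that K-injectivity in $\Cscr_C$ descends to K-injectivity in $\Cscr$; as indicated above this reduces to the exactness of the induction functor $C\otimes_k(-)$, which is where the field hypothesis on $k$ enters.
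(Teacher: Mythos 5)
Your proof is correct and takes essentially the same route as the paper: resolve $P$ K-projectively over $C\otimes_k B^\circ$, resolve $N$ K-injectively over $\Cscr_C$, observe that these resolutions remain adequate after forgetting (K-flat over $B$, K-injective in $\Cscr$), and conclude from the underived currying adjunction. The paper phrases this in model-category language (fibrant/cofibrant) and leaves the two compatibility observations as ``easy to see,'' which you have spelled out via the freeness of $C\otimes_k B^\circ$ over $B$ and the exactness of the induction functor $C\otimes_k(-)$.
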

\begin{proof}
  We may assume that $N$ is fibrant for the standard model
  structure on complexes over $\Cscr_C$ (e.g.\ \cite{Beke}) and that $P$ is
  cofibrant as $C\otimes_k B^\circ$-complex for the projective model
  structure on complexes \cite{Hovey}.  It easy to see that $N$ is
  fibrant as a complex over $\Cscr$ and $P$ is cofibrant as a
  $B^\circ$-complex. In that case we must show
\[
\underline{\Hom}_{C\otimes_k B^{\circ}}(P,\underline{\Hom}_\Cscr(M,N))=\underline{\Hom}_{\Cscr_C}(P\otimes_B M,N).
\]
where $\underline{\Hom}(-,-)$ denotes the morphism complex.
We claim the left and right hand side are the same complex.
It is enough to show this when $P,M,N$ are  objects concentrated in degree zero (with $P$ projective and
$N$ injective). In this
case we must show
\[
\Hom_{C\otimes_k B^{\circ}}(P,{\Hom}_\Cscr(M,N))=\Hom_{\Cscr_C}(P\otimes_B M,N).
\]
Since $P$ is projective, we may reduce to the case that $P$ is free, so that $P=C\otimes_k B^\circ$. Then
we must show
\[
\Hom_{C\otimes_k B^{\circ}}(C\otimes_k B^{\circ},\Hom_\Cscr(M,N))=\Hom_{\Cscr_C}((C\otimes_k B^\circ)\otimes_B M,N),
\]
which is clearly true.
\end{proof}

The following is a useful corollary:
\begin{corollary}
\label{cor:cor1}
Assuming that $M$ is right bounded and $N$ is left bounded, we get a convergent spectral sequence
\[
E_2^{p,q}= \HH^p(B,\Ext^q_{\Cscr}(M,N))\Rightarrow \Ext^{p+q}_{\Cscr_B}(M,N).
\]
\end{corollary}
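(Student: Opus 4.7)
The plan is to obtain the spectral sequence directly from equation \eqref{ref-4.1-21} by viewing it as a composition of derived functors. The right-hand side $\RHom_{\Cscr_B}(M,N)$ has cohomology $\Ext^{p+q}_{\Cscr_B}(M,N)$, which will serve as the abutment. For the left-hand side, I would regard $\RHom_{B\otimes_k B^\circ}(B,-)$ as the hyperderived functor of $\Hom_{B\otimes_k B^\circ}(B,-)$ on the category of complexes of $B$-bimodules, so that, applied to the complex $\RHom_\Cscr(M,N)$, it produces the standard hypercohomology (or ``first'') spectral sequence
\[
E_2^{p,q} = \Ext^p_{B\otimes_k B^\circ}\bigl(B,\, H^q\RHom_\Cscr(M,N)\bigr)\Rightarrow H^{p+q}\RHom_{B\otimes_k B^\circ}\bigl(B,\RHom_\Cscr(M,N)\bigr).
\]
The cohomology object $H^q\RHom_\Cscr(M,N)=\Ext^q_\Cscr(M,N)$ carries a natural $B$-bimodule structure induced from the $B$-actions on $M$ and $N$, and by the very definition recalled at the start of the section one has $\Ext^p_{B\otimes_k B^\circ}(B, E)=\HH^p(B,E)$ for any $B$-bimodule $E$. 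Combined with the identification \eqref{ref-4.1-21}, this rewrites the $E_2$-page and the abutment in exactly the claimed form.

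The remaining point is convergence, which is where the boundedness hypotheses enter. Because $M$ is right bounded and $N$ is left bounded, $\RHom_\Cscr(M,N)$ is cohomologically bounded below, so it admits a quasi-isomorphism from a bounded-below complex of injective $B$-bimodules (or one applies the hyperderived functor via a Cartan--Eilenberg resolution, truncated appropriately). The associated hypercohomology spectral sequence then lies in the second quadrant (shifted) where the standard convergence criterion applies, giving a convergent first-quadrant-type spectral sequence in the usual sense.

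I do not expect any serious obstacle: the only things to check are that the bimodule structure on $\Ext^q_\Cscr(M,N)$ obtained from the spectral sequence genuinely coincides with the one used to define $\HH^p$, and that the hypercohomology spectral sequence converges under the stated boundedness hypotheses. Both are routine, and one could equally phrase the argument as a Grothendieck spectral sequence for the composition of functors $\Hom_{B\otimes_k B^\circ}(B,-)\circ \Hom_\Cscr(M,-)$, which makes the $E_2$-page identification immediate.
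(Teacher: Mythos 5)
Your argument is correct and is exactly the (implicit) derivation intended in the paper: the corollary is stated without a separate proof, and the standard way to pass from the identity \eqref{ref-4.1-21} to the spectral sequence is precisely the hypercohomology spectral sequence for $\RHom_{B\otimes_k B^\circ}(B,-)$ applied to the bounded-below complex $\RHom_\Cscr(M,N)$, with the $E_2$-page rewritten via $\Ext^p_{B\otimes_k B^\circ}(B,E)=\HH^p(B,E)$ and the abutment identified via \eqref{ref-4.1-21}. The only caveat worth flagging is your closing aside: a literal Grothendieck spectral sequence for the composition $\Hom_{B\otimes_k B^\circ}(B,-)\circ\Hom_\Cscr(M,-)$ requires $M$ to be an object rather than a complex and requires checking that $\Hom_\Cscr(M,-)$ carries injectives of $\Cscr_B$ to $\Hom_{B\otimes_k B^\circ}(B,-)$-acyclic bimodules, so in the generality stated (with $M$, $N$ complexes) the hypercohomology route you took first is the right one.
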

Below we will need the following consequence:
\begin{corollary}
\label{cor:cor2}
 Assume that  $\Cscr$ has global dimension
one. Assume furthermore $M,N\in \Cscr_B$. Then there is an isomorphism
\[
\HH^0(B,\Hom_\Cscr(M,N))=\Hom_{\Cscr_B}(M,N)
\]
as well as a long
exact sequence
\[
\begin{gathered}
\HH^1(B,\Hom_{\Cscr}(M,N))\r \Ext^1_{\Cscr_B}(M,N)\r \HH^0(B,\Ext^1_{\Cscr}(M,N))
\r
\\
\HH^2(B,\Hom_{\Cscr}(M,N))\r \Ext^2_{\Cscr_B}(M,N)\r \HH^1(B,\Ext^1_{\Cscr}(M,N))
\r\\
\HH^3(B,\Hom_{\Cscr}(M,N))\r \Ext^3_{\Cscr_B}(M,N)\r \HH^2(B,\Ext^1_{\Cscr}(M,N))
\r.
\end{gathered}
\]
In particular, if $\Cscr_B$ also has global dimension one then
\begin{equation}
\label{ref-4.3-24}
\HH^{1+i}(B,\Ext^1_{\Cscr}(M,N))
\cong
\HH^{3+i}(B,\Hom_{\Cscr}(M,N))
\end{equation}
for $i\ge 0$.
\end{corollary}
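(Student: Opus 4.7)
The plan is to apply the spectral sequence from Corollary \ref{cor:cor1} and exploit the fact that the hypothesis $\gldim \Cscr \le 1$ collapses it into a two-row spectral sequence, which standardly yields a long exact sequence.

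First I would note that since $\Cscr$ has global dimension one, $\Ext^q_\Cscr(M,N) = 0$ for all $q \ge 2$, so the $E_2$-page of the spectral sequence
\[
E_2^{p,q} = \HH^p(B, \Ext^q_\Cscr(M,N)) \Rightarrow \Ext^{p+q}_{\Cscr_B}(M,N)
\]
is concentrated in the two rows $q=0$ and $q=1$. The differential $d_2$ has bidegree $(2,-1)$, and all higher differentials $d_r$ (for $r \ge 3$) have bidegree $(r, 1-r)$, which necessarily shifts out of the strip $0 \le q \le 1$. Hence $E_3 = E_\infty$, and the only possibly nonzero differential is
\[
d_2^{p,1} : \HH^p(B, \Ext^1_\Cscr(M,N)) \r \HH^{p+2}(B, \Hom_\Cscr(M,N)).
\]

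Next I would unwind the resulting two-step filtration on each $\Ext^n_{\Cscr_B}(M,N)$: one has short exact sequences
\[
0 \r E_\infty^{n,0} \r \Ext^n_{\Cscr_B}(M,N) \r E_\infty^{n-1,1} \r 0
\]
with $E_\infty^{n,0} = \coker(d_2^{n-2,1})$ and $E_\infty^{n-1,1} = \ker(d_2^{n-1,1})$. Splicing these together using the differentials $d_2^{p,1}$ produces precisely the long exact sequence in the statement. The case $n=0$ gives the identification $\Hom_{\Cscr_B}(M,N) = \HH^0(B,\Hom_\Cscr(M,N))$, since $E_\infty^{-1,1} = 0$.

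Finally, for the isomorphism \eqref{ref-4.3-24}, the extra hypothesis $\gldim \Cscr_B \le 1$ kills $\Ext^n_{\Cscr_B}(M,N)$ for $n \ge 2$, so in the long exact sequence the middle terms vanish starting from position $n=2$; this forces the connecting maps $d_2^{n-1,1}$ between $\HH^{n-1}(B,\Ext^1_\Cscr(M,N))$ and $\HH^{n+1}(B,\Hom_\Cscr(M,N))$ to be isomorphisms for $n \ge 2$, which upon reindexing by $i = n-2 \ge 0$ yields the claimed identity. There is no real obstacle here: the argument is entirely formal spectral-sequence bookkeeping, and the only care needed is in verifying that the edge maps and connecting homomorphisms match the ones displayed in the statement.
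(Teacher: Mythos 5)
Your proposal is correct and takes essentially the same route as the paper: both invoke the spectral sequence of Corollary \ref{cor:cor1}, observe that $\gldim \Cscr \le 1$ collapses it to two rows, and read off the long exact sequence and the resulting isomorphism. The paper merely draws the two-row $E_2$-page and says the conclusion follows; you supply the standard splicing of the $E_\infty$ short exact sequences and the identification of the connecting maps with $d_2$, which is exactly the bookkeeping the paper leaves implicit.
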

\begin{proof} Writing $\HH^p(\Ext^q)$ for $\HH^p(B,\Ext^q_\Cscr(M,N))$, the spectral sequence
looks like
\[
\xymatrix{
0&0&0&0\\
\HH^0(\Ext^1)\ar[rrd] & \HH^1(\Ext^1)\ar[rrd] & \HH^2(\Ext^1) & \HH^3(\Ext^1) & \dots\\
\HH^0(\Ext^0) & \HH^1(\Ext^0) & \HH^2(\Ext^0) & \HH^3(\Ext^0) & \dots\\
}.
\]
The conclusion easily follows.
\end{proof}
Finally, here is another corollary we will use.  
\begin{corollary}
\label{ref-4.5-25}
Assume that there is a $k$-algebra morphism $\rho:C\r B$.
Let $N$ be in $D(C\otimes_k B^\circ)$. Then there is a canonical isomorphism
\[
\RHom_{B\otimes_k B^\circ}(B,\RHom_C(B,N))=\RHom_{C\otimes_k C^\circ}(C,N)
\] 
where we have considered $B$ as a $C\otimes_k B^\circ$ module via 
the map $\rho\otimes 1:C\otimes_k B^\circ\r B\otimes_k B^\circ$.
\end{corollary}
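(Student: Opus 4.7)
The plan is to deduce the identity in two steps, chaining Lemma~\ref{ref-4.2-22} with a standard change-of-rings adjunction.

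First I would apply Lemma~\ref{ref-4.2-22} with $\Cscr = \Mod(C)$, taking both of the algebras named $B$ and $C$ in the lemma to be our $B^\circ$. Setting $P = M = B$ (with its natural $(C, B^\circ)$-bimodule structure via $\rho \otimes 1$) and letting the lemma's $N$ be our $N$, one has $\Cscr_{B^\circ} = \Mod(C \otimes_k B^\circ)$, and the canonical $k$-algebra isomorphism $B^\circ \otimes_k B \cong B \otimes_k B^\circ$ makes \eqref{ref-4.2-23} read
\[
\RHom_{B \otimes_k B^\circ}(B, \RHom_C(B, N)) = \RHom_{C \otimes_k B^\circ}(B \Lotimes_{B^\circ} B, N).
\]
Since $B$ is free of rank one as a $B^\circ$-module, the derived tensor reduces to the ordinary one, and the multiplication map $p \otimes m \mapsto mp$ identifies $B \otimes_{B^\circ} B \cong B$ as $(C, B^\circ)$-bimodules. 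Hence the right-hand side simplifies to $\RHom_{C \otimes_k B^\circ}(B, N)$.

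For the second step, I would use the change-of-rings adjunction attached to the ring map $1 \otimes \rho^\circ : C \otimes_k C^\circ \r C \otimes_k B^\circ$. Its restriction of scalars functor has left adjoint $X \mapsto X \Lotimes_C B$, yielding
\[
\RHom_{C \otimes_k B^\circ}(X \Lotimes_C B, Y) = \RHom_{C \otimes_k C^\circ}(X, \Res Y).
\]
Plugging in $X = C$, $Y = N$, and noting that $C \Lotimes_C B = B$, this becomes $\RHom_{C \otimes_k B^\circ}(B, N) = \RHom_{C \otimes_k C^\circ}(C, N)$, which combined with the previous display proves the corollary.

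The main obstacle is not mathematical but bookkeeping: in the first step Lemma~\ref{ref-4.2-22} produces left- and right-$B^\circ$-actions that must be translated into the standard $B \otimes_k B^\circ$-bimodule structures on $B$ and on $\RHom_C(B, N)$, and one must verify that the isomorphisms $B \otimes_{B^\circ} B \cong B$ and $B^\circ \otimes_k B \cong B \otimes_k B^\circ$ really respect all the relevant bimodule structures. Once this dictionary is in place the argument is essentially formal.
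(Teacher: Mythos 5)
Your proposal is correct and follows essentially the same route as the paper. The only cosmetic difference is that the paper invokes the specialization \eqref{ref-4.1-21} of Lemma~\ref{ref-4.2-22} (so that $P \Lotimes_B M$ is already $M$), while you apply Lemma~\ref{ref-4.2-22} directly with $P = M = B$ and therefore need the small extra step $B \Lotimes_{B^\circ} B \cong B$; the second step is identical, as the adjunction you use for $1 \otimes \rho^\circ$ is precisely the change-of-rings identity $(C \otimes_k B^\circ) \Lotimes_{C \otimes_k C^\circ} C = B$ that the paper cites.
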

\begin{proof}
We apply \eqref{ref-4.1-21}, where $\Cscr=\Mod(C)$, $M=B$.  In this way we get
\begin{align*}
\RHom_{B\otimes_k B^\circ}(B,\RHom_C(B,N))&=\RHom_{B^\circ\otimes_k B}(B^\circ,\RHom_C(B,N))\\
&=\RHom_{C\otimes_k B^\circ}(B,N)\\
&=\RHom_{C\otimes_k C^\circ}(C,N)
\end{align*}
The last equality follows from ``change of rings'' since 
$
(C\otimes_k B^\circ)\Lotimes_{C\otimes_k C^\circ} C=B
$.
\end{proof}

\section{Lifting field actions in the hereditary case}
\label{ref-5-26}
Recall that a $k$-algebra $A$ is defined to be of finite representation type if there are finitely many isomorphism classes of indecomposable left $A$-modules. $A$ is tame if it is not of finite representation type and if the isomorphism classes of indecomposable left A-modules in any fixed dimension are almost all contained in a finite number of 1-parameter families.

Let $A$ be a finite dimensional $k$-algebra which is either tame or of finite representation type and
let $X/k$ a smooth projective curve of genus $g\le 1$. Let $L/k$ be an arbitrary field extension.

The principal application of the results in this section is the fact that the essential images of the
functors
\[
D^b(\mod(A_L))\r D^b(\Mod(A))_L
\]
and
\[
D^b(\coh(X_L))\r D^b(\Qcoh(X))_L
\]
are precisely the objects which have cohomology in $\mod(A_L)\subset \Mod(A_L)=\Mod(A)_L$ in the first case
and in $\coh(X_L)\subset \Qcoh(X_L)=\Qcoh(X)_L$ in the second case.

To be consistent
with the setup in the introduction, we would have preferred to talk about 
$\mod(A)_L$ instead of $\mod(A_L)$ and similarly about $\coh(X)_L$ instead
of $\coh(X_L)$.  Unfortunately, this is incorrect.  If $\Cscr$ is a
$\Hom$-finite abelian category and $L/k$ is an infinite field extension,
then $\Cscr_L$ contains only the zero
object.

In order to be able to describe our results abstractly, we will first discuss
a different notion of base extension for essentially small abelian
categories such as $\mod(A)$, $\coh(X)$ which behaves in the way we
expect. 
\subsection{Base extension for essentially small abelian categories}
 Let $\Cscr$ be an essentially
small abelian category. The category $\Ind \Cscr$ is obtained by formally closing $\Cscr$ under direct 
limits (see e.g. \cite[\S2.2]{LowenVdB1}).  It is well known that $\Ind{\Cscr}$ is a Grothendieck category and, furthermore,
$\Cscr$ can be recovered as $\Fp\Ind{\Cscr}$, the category of finitely presented objects in $\Ind{\Cscr}$.

A Grothendieck category is said to be locally coherent if it is
locally finitely presented (that is: generated by its finitely
presented objects) and the finitely presented objects form an abelian
subcategory. Thus $\Ind \Cscr$ is locally coherent.  Conversely, if
$\Dscr$ is a locally coherent Grothendieck category then
$\Dscr\cong\Ind\Fp\Dscr$.

Now assume that $\Cscr$ is in addition $k$-linear, and let $B$ be a $k$-algebra. If $C\in \Cscr$, then $B\otimes_k C$ is finitely presented in
$(\Ind \Cscr)_B$, and hence $(\Ind \Cscr)_B$ is locally finitely presented. Set
\[
\Cscr_{[B]}=\Fp((\Ind\Cscr)_B).
\]
In good cases, $\Cscr_{[B]}$ will be abelian (or equivalently:
$(\Ind \Cscr)_B$ will be locally coherent). Here are some typical examples:
\begin{examples}
\begin{enumerate}
\item If  $X$ is a $k$-scheme of finite type, then $\coh(X)_{[L]}=\coh(X_L)$ for $L/k$ an arbitrary field extension.
\item If $A$ a finite dimensional $k$-algebra, then $\mod(A)_{[L]}=\mod(A_L)$.
\end{enumerate}
\end{examples}
We need to extend this notion of base extension to the derived setting. 
Assuming that $\Cscr_{[B]}$ is abelian, we will define $D^b(\Cscr)_{[B]}$ as the full subcategory of $D^b(\Ind \Cscr)_B$ whose objects
 have cohomology in $\Cscr_{[B]}$. Thus we have a $2$-Cartesian commutative diagram
\begin{equation}
\label{ref-5.1-27}
\xymatrix{%
D^b(\Cscr_{[B]})\ar[d]_{[F]}\ar@{^(->}[r] & D^b((\Ind \Cscr)_B)\ar[d]^F\\
D^b(\Cscr)_{[B]}\ar@{^(->}[r] & D^b(\Ind \Cscr)_B\\
}
\end{equation}
The full faithfulness of the lower horizontal arrow is by definition, whereas the full faithfulness of the upper arrow is an application of \cite[Prop.\ 2.14]{LowenVdB1}, which asserts that for an essentially small abelian category $\Dscr$
the natural functor
\[
D^b(\Dscr)\r D^b(\Ind \Dscr)
\]
is fully faithful (and its essential image is $D^b_{\Dscr}(\Ind \Dscr)$). We
apply this result with $\Dscr=\Cscr_{[B]}$ since then by construction we have
$\Ind \Dscr=\Ind\Fp ((\Ind\Cscr)_B)
\cong (\Ind\Cscr)_B$.
\subsection{General discussion}
\label{ref-5.2-28}
Let $\Dscr$ be a $k$-linear hereditary category, i.e.\ an abelian category of global dimension one, and let $L/k$ be a field extension. Let $Z\in D^b(\Dscr)_L$. 
In $D^b(\Dscr)$ we have  $Z\cong\bigoplus_i s^i H^{-i}(Z)$, where $s$ denotes the shift functor.
Thus $\End_\Dscr(Z)$ is given by lower triangular
matrices
\[
\small
\begin{pmatrix}
\dots& \dots &\dots &\dots &\dots\\
\dots& \End_\Dscr(H^{-i+1}(Z)) &0 &0 &\dots\\
\dots&\Ext^1_\Dscr(H^{-i+1}(Z),H^{-i}(Z)) &\End_\Dscr(H^{-i}(Z)) & 0&\cdots\\
\dots & 0 &\Ext^1_\Dscr(H^{-i}(Z),H^{-i-1}(Z)) & \End_\Dscr(H^{-i-1}(Z)) &\cdots\\
\dots& \dots &\dots &\dots &\dots\\
\end{pmatrix}
\]
Similarly $\Ext^j_\Dscr(Z,Z)$ is given by
\begin{equation}
\label{ref-5.2-29}
\small
\begin{pmatrix}
\dots& \dots &\dots &\dots &\dots\\
\dots& \Hom_\Dscr(H^{-i+1}(Z),H^{-i+1-j}(Z)) &0 &0 &\dots\\
\dots&\Ext^1_\Dscr(H^{-i+1}(Z),H^{-i-j}(Z)) &\Hom_\Dscr(H^{-i}(Z),H^{-i-j}(Z)) & 0&\cdots\\
\dots & 0 &\Ext^1_\Dscr(H^{-i}(Z),H^{-i-1-j}(Z)) & \End_\Dscr(H^{-i-1}(Z),H^{-i-1-j}(Z)) &\cdots\\
\dots& \dots &\dots &\dots &\dots\\
\end{pmatrix}
\end{equation}
The $L$-bimodule structure on $\Ext^j_\Dscr(Z,Z)$ is given by its
action on $Z$, i.e. by its morphism $L\r \End_\Dscr(Z)$.

\begin{definitions} Let $\Cscr$ be an essentially small $k$-linear abelian category
which satisfies the following additional conditions
for every field extension $L/k$:
\begin{enumerate}
\item[(E1)] $\Cscr_{[L]}$ is abelian.
\item[(E2)] Every object in $\Cscr_{[L]}$ is a direct sum of indecomposable objects.
\end{enumerate}
\label{ref-5.2.1-30} 
We say that $\Cscr$ is of essential dimension $\leq d$ if every
indecomposable object $V$ in $\Cscr_{[L]}$ with ${L}$
algebraically closed can be defined over a finitely generated field
extension of $k$ of transcendence degree $\le d$. More precisely, there exists a finitely generated $L_0/k$ such that $\operatorname{trdeg}_k
L_0 \leq d$ and $V_0\in \Cscr_{[L_0]}$ such that $V\cong L\otimes_{L_0}
V$, where $-\otimes_{L_0} V$ is the unique finite colimit preserving functor such that $L_0\otimes_{L_0}V=V$. The minimal such $d$ valid for all $V$ is called the essential
dimension $\operatorname{ess} \Cscr$ of $\Cscr$. If such $d$ does not
exist then $\operatorname{ess}\Cscr=\infty$.
\end{definitions}
Note that (E2) holds if $\Cscr_{[L]}$ is $\Hom$-finite.

\begin{remarks}
\label{ref-5.2.2-31}
\begin{itemize}
\item If $\Cscr=\mod(A)$, where $A$ is a finite dimensional hereditary
$k$-algebra, then it follows from the classification of indecomposable representations
for Dynkin and extended Dynkin quivers \cite{Gabriel2,Ringel4} as well
as the existence of arbitrarily large moduli spaces in the other cases (e.g.\ Proposition \ref{ref-2.5.7-17})
that 
$\ess \Cscr=0,1,\infty$ depending on whether $\Cscr$
is of finite representation type, tame or wild.
\item If $\Cscr=\coh(X)$, where $X$ is a projective smooth curve/$k$, then it follows from the Grothendieck
classification of indecomposable coherent sheaves on $\PP^1$ and the corresponding (much harder) classification by Atiyah
for elliptic curves, as well as the existence of arbitrarily large moduli spaces in the other cases (e.g.\ \S\ref{ref-3-18}),
that 
$\ess\Cscr$ is $1$ if $X$ is $\PP^1$ or an elliptic curve and $\infty$
otherwise. 
\item It is not clear to us if there can be examples with $\ess \Cscr$ strictly
bigger than~$1$ but finite. In the standard algebraic and geometric cases this is probably excluded
by the tame-wild dichotomy.
\end{itemize}
\end{remarks}
\begin{theorems} \label{ref-5.2.3-32}  Let $\Cscr$ be an essentially small
  $k$-linear abelian category satisfying (E1)(E2) above, and assume in addition that $\Ind \Cscr$ is hereditary. 
Consider the usual 
  forgetful functor $[F]:D^b(\Cscr_{[L]})\r D^b(\Cscr)_{[L]}$ for an arbitrary field field extension
$L/k$.
\begin{itemize}
\item If $\Cscr$ has essential dimension $\le 2$, $[F]$ is essentially surjective.
\item If $\Cscr$ has essential dimension $\le 1$, $[F]$ is in addition full.
\item If $\Cscr$ has essential dimension $0$, $[F]$ is an equivalence
of categories. 
\end{itemize}
\end{theorems}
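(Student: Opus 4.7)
The plan is to reduce Theorem~\ref{ref-5.2.3-32} to the Grothendieck-category statement Proposition~\ref{ref-B-1}, exploiting the essential dimension hypothesis to descend to a suitably small subfield $L_0\subseteq L$. Since $\Ind\Cscr$ is hereditary, any $Z\in D^b(\Cscr)_{[L]}$ decomposes in $D^b(\Ind\Cscr)$ as $Z\cong\bigoplus_i s^i M_i$ with $M_i=H^{-i}(Z)\in \Cscr_{[L]}$, and the $L$-action on $Z$ unpacks, via the triangular description of $\End_{D^b(\Ind\Cscr)}(Z)$ in \eqref{ref-5.2-29}, into $k$-algebra maps $\rho_i\colon L\to \End_{\Ind\Cscr}(M_i)$ (making each $M_i$ an object of $(\Ind\Cscr)_L=\Ind\Cscr_{[L]}$) together with Hochschild $1$-cocycles $\rho^{i+1,i}\colon L\to \Ext^1_{\Ind\Cscr}(M_{i+1},M_i)$ encoding the gluing.

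Next, I would decompose each $M_i$ into its indecomposable summands in $\Cscr_{[L]}$; this is a finite decomposition, since in our applications $\Cscr_{[L]}$ is $\Hom$-finite and (E2) is Krull--Schmidt. After passing to $\bar{L}$ (and using faithfully flat descent to recover the general $L/k$), the essential-dimension hypothesis provides a finitely generated subfield $L_0\subseteq \bar{L}$ of transcendence degree at most $d:=\ess\Cscr$ over which every indecomposable summand occurring in any of the $M_i$'s is defined. Since only finitely many summands appear in total, one such $L_0$ works simultaneously for all of them.

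The heart of the argument is then to show that the cocycle data $\rho^{i+1,i}$ also descends to $L_0$, so that $Z$ is the base-change of some $Z_0\in D^b(\Ind\Cscr)_{L_0}$. For this one uses that $\Ext^1_{\Ind\Cscr}(M_{i+1},M_i)$ decomposes according to pairs of indecomposable summands and, being built from objects defined over $L_0$, is in a precise sense base-extended from $L_0$-bimodule data, combined with the change-of-rings identities of Corollaries~\ref{cor:cor1} and~\ref{ref-4.5-25}. Once $Z_0$ is in hand, Proposition~\ref{ref-B-1} applies directly to $L_0/k$, whose Hochschild dimension equals $\trdeg_k L_0\le d$: essential surjectivity when $d\le 2$, full and essentially surjective when $d\le 1$, and an equivalence when $d=0$. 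Tensoring up the resulting lift $Z_0'\in D^b((\Ind\Cscr)_{L_0})$ along $L_0\to L$ yields an object of $D^b((\Ind\Cscr)_L)$ mapping to $Z$; fullness and faithfulness are read off at the $L_0$-level together with a base-change argument.

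The main obstacle is precisely the descent of the cocycle data in the middle step: the underlying cohomologies descend by essential dimension, but a cocycle defined on $L$ need not a priori factor through $L_0$. Showing that nothing essentially new enters at the $L$-level---i.e., that $\HH^1(L,\Ext^1_{\Ind\Cscr}(M_{i+1},M_i))$ and the higher $\HH^p$ appearing as obstructions in the spectral sequence of Corollary~\ref{cor:cor1} are controlled by their $L_0$-analogues---is the subtle point, since the $\Ext^1$-bimodules in play need not be finitely generated over $L\otimes_k L$. Combining indecomposable-level compactness of the $V_{i,\alpha}$, the hereditary hypothesis (which collapses the spectral sequence to two rows, as exploited already in Corollary~\ref{cor:cor2}), and the explicit long exact sequence there should reduce the obstruction computation to vanishing of Hochschild groups over $L_0$ in degrees exceeding $d$.
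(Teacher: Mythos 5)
Your approach is genuinely different from the paper's and, as you write it, has a real gap at the step you yourself flag.

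The paper never descends $Z$ to $L_0$. Instead it applies the obstruction theory of Proposition~\ref{ref-9.1.1-48} directly over $L$: one must show $\HH^n(L,\Ext^{-n+2}_\Dscr(Z,Z))=0$ for $n\ge 3$, and by the triangular filtration this reduces to showing $\HH^n(L,\Ext^l_\Dscr(U,V))=0$ for indecomposable $U,V\in\Cscr_{[L]}$. The crucial trick is that one only needs to descend \emph{one} of the two arguments: writing $U=L\otimes_{L_0}U_0$ with $\trdeg_k L_0\le 2$, one has $\Ext^l_\Dscr(U,V)=\Hom_{L_0}(L,\Ext^l_\Dscr(U_0,V))$, with $V$ left untouched, and then Corollary~\ref{ref-4.5-25} collapses $\HH^n(L,\Hom_{L_0}(L,-))$ to $\HH^n(L_0,-)$, which vanishes for $n\ge 3$. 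The reduction to $L_0$ happens inside the obstruction group, not at the level of $Z$.

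Your plan instead asks for a genuine two-sided descent: find $Z_0\in D^b(\Cscr)_{[L_0]}$ with $L\otimes_{L_0}Z_0\cong Z$, including all the gluing cocycles $\rho^{i+1,i}\colon L\to\Ext^1_\Dscr(M_{i+1},M_i)$, then apply Proposition~\ref{ref-B-1} over $L_0$ and base-change the lift back up. You correctly identify that the descent of the cocycle data is ``the subtle point,'' but the last paragraph only gestures at it without an actual argument, and I do not believe it can be repaired as stated. A derivation $L\to\Ext^1_\Dscr(M_{i+1},M_i)$ can involve parameters of $L$ that play no role in the cohomology objects $M_i$ and that cannot be conjugated away: such a $Z$ will still \emph{lift} to $D^b(\Cscr_{[L]})$ (because the obstruction groups vanish as above), but it will not \emph{descend} to any finitely generated subfield of bounded transcendence degree. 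So you are implicitly trying to prove a strictly stronger statement than the theorem, and that stronger statement is almost certainly false. The way to fix your argument is to abandon descent of $Z$ and go straight at $\HH^n(L,\Ext^l_\Dscr(U,V))$, descending only $U$ and applying Corollary~\ref{ref-4.5-25} as the paper does.

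A secondary correction: you do not need to find a single $L_0$ working simultaneously for all indecomposable summands of all the $M_i$. Since the reduction to Hochschild groups is term by term, a separate $L_0$ for each pair (in fact for each $U$) suffices. This also means you do not need Krull--Schmidt uniqueness; axiom~(E2) alone is what is used.
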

\begin{proof} The three parts of the theorem are similar, so we will only prove the
first assertion. Assume thus $\ess \Cscr\le 2$. 

Set $\Dscr=\Ind \Cscr$ and let $Z\in D^b(\Cscr)_{[L]}\subset D^b(\Dscr)_L$.
By \eqref{ref-5.1-27} it is sufficient to prove that $Z$ is in the essential image
of $D^b(\Dscr_L)$.

 The
lower triangular structure of the matrix 
\eqref{ref-5.2-29}
equips $\Ext^j_\Dscr(Z,Z)$ in a natural way
with a two-step filtration stable under the left and right $\End_\Dscr(Z)$-action.
Hence $\Ext^j_\Dscr(Z,Z)$ is a two-step filtered $L$-bimodule and the
associated quotients are sums of 
\[
\Ext^l_\Dscr(U,V)
\]
where $U,V$ are among the $H^i(Z)$. In particular $U,V$ are objects in $\Cscr_{[L]}$.

To prove essential surjectivity, we have to show the vanishing of
\[
\HH^n(L,\Ext^{-n+2}_\Dscr(Z,Z))
\]
for $n\ge 3$ (see Proposition \ref{ref-9.1.1-48} below). Using the above filtration, it is sufficient to show the vanishing of 
\begin{equation}
\label{ref-5.3-33}
\HH^n(L,\Ext^l_\Dscr(U,V))
\end{equation}
for all $l$, $n\ge 3$ and for $U,V\in \Cscr_{[L]}$.  Let $\bar{L}$ be the algebraic closure of $L$.
We have
\begin{align*}
\HH^n(\bar{L},\Ext^l_{\Dscr}(\bar{L}\otimes_L U,\bar{L}\otimes_L V))&
=\HH^n(\bar{L},\Hom_L(\bar{L},\Ext^l_{\Dscr}(U, \bar{L}\otimes_L V)))\\
&=\HH^n(L,\Ext^l_{\Dscr}(U, \bar{L}\otimes_L V)))\qquad (\text{Corollary \ref{ref-4.5-25}}).
\end{align*}
Since $V$ is a direct summand of $\bar{L}\otimes_L V$, it suffices to prove that \eqref{ref-5.3-33}
vanishes in the case that $L$ is algebraically closed.
It is clear that we may assume
in addition that  $U,V$ are indecomposable.  Since $\ess \Cscr\le 2$ we may
write $U=L\otimes_{L_0} U_0$ where $L_0/k$ is a finitely generated field
extension of transcendence degree at most two and $U_0$ is in $\Dscr_{L_0}$. We
then find $\Ext^l_\Dscr(U,V)=\Hom_{L_0}(L,\Ext^l_\Dscr(U_0,V))$ and so
\begin{align*}
\HH^n(L,\Ext^l_\Dscr(U,V))&=
\HH^n(L,\Hom_{L_0}(L,\Ext^l_\Dscr(U_0,V)))\\
&=\HH^n(L_0,\Ext^l_\Dscr(U_0,V))\qquad \text{(Corollary \ref{ref-4.5-25})}\\
&=0 \qquad \text{(since $n\ge 3$)}\qed
\end{align*}
\def\qed{}\end{proof}
\section{Counterexamples to lifting in the hereditary case}
In this section we prove a non-lifting theorem in the hereditary case. In contrast to the previous section we use
standard base extension for abelian categories as defined in the introduction.
\begin{proposition} \label{ref-6.1-34}
Let either $\Cscr$ be $\Mod(kQ)$ with $Q$ be a connected finite ``wild'' quiver (i.e.\ $Q$ is not Dynkin or extended Dynkin) 
or else $\Cscr$ be $\Qcoh(X)$ with $X$ a curve of genus $\ge 2$.
Then there exists a finitely generated field extension $L/k$
  of transcendence degree $3$ together with an object $Z\in D^b(\Cscr)_{L}$ which is not a direct summand of an object in 
the essential image of the forgetful functor $F:D^b(\Cscr_{L})\r D^b(\Cscr)_{L}$. We may in addition
assume that the cohomology of $Z$ lies in $\mod(kQ_L)$ or $\coh(X_L)$, depending on the situation.
\end{proposition}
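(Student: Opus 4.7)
The plan is to construct an explicit $Z \in D^b(\Cscr)_L$ by twisting the $L$-action on $V_\eta \oplus V_\eta[1]$, where $V_\eta$ is the restriction of a universal family to the generic point of a suitable three-dimensional moduli, and then invoke Proposition \ref{ref-9.1.1-48} to rule out the lift.

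\textbf{Extraction of $V_\eta$.} In the quiver case, combining the remark preceding Proposition \ref{ref-2.5.7-17} with Proposition \ref{ref-2.5.7-17} itself yields an indivisible $\alpha \in F(Q)$ with $(\alpha,\alpha) \le -4$ and a stability vector $\lambda$ such that $M_{\alpha,\lambda,Q}$ is nonempty of dimension at least three (the two-loop quiver requires minor modification, which I gloss over). In the curve case, $\dim M_{r,d} = 1 + r^2(g-1) \ge 3$ for any coprime $(r,d)$ with $r \ge 2$ and $g \ge 2$. Restrict to a generic three-dimensional irreducible closed subscheme of the moduli (e.g., a complete intersection), let $\eta$ be its generic point, and put $L = k(\eta)$, a finitely generated extension of transcendence degree exactly three. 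Take $V_\eta$ to be the restriction of the universal family (post-composed with $p_\ast$ in the curve case). By Propositions \ref{ref-2.5.2-11} and \ref{ref-3.1-19}, $\End_\Cscr(V_\eta) = L$; moreover $V_\eta$ lies visibly in $\mod(kQ_L) \subset \Mod(kQ_L)$ or $\coh(X_L) \subset \Qcoh(X_L)$.

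\textbf{Building the twisted object $Z$.} Put $Z = V_\eta \oplus V_\eta[1] \in D^b(\Cscr)$. Hereditarity of $\Cscr$ gives
\[
\End_\Cscr(Z) = \begin{pmatrix} L & 0 \\ \Ext^1_\Cscr(V_\eta,V_\eta) & L \end{pmatrix},
\]
so a ring map $L \to \End_\Cscr(Z)$ extending the diagonal $L \hookrightarrow L \oplus L$ is determined by a $k$-derivation $\delta : L \to \Ext^1_\Cscr(V_\eta,V_\eta)$, and two such actions produce isomorphic objects of $D^b(\Cscr)_L$ only when their classes in $\HH^1(L,\Ext^1_\Cscr(V_\eta,V_\eta))$ agree up to the scaling action of $L^\ast$. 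Since both $\Cscr$ and $\Cscr_L$ (equal to $\Mod(LQ)$ or $\Qcoh(X_L)$) are hereditary, Corollary \ref{cor:cor2} furnishes
\[
\HH^1(L,\Ext^1_\Cscr(V_\eta,V_\eta)) \cong \HH^3(L,L),
\]
which is nonzero since $L/k$ has Hochschild dimension three by Osofsky's theorem. Pick $\delta$ representing a nonzero class and let $Z$ denote the resulting twisted object.

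\textbf{Non-liftability.} By Proposition \ref{ref-9.1.1-48}, the obstruction to $Z$ lying in the essential image of $F$ sits in $\bigoplus_{n \ge 3}\HH^n(L,\Ext^{-n+2}_\Cscr(Z,Z))$. Hereditarity of $\Cscr$ and the two-degree support of $Z$ force $\Ext^{-n+2}_\Cscr(Z,Z) = 0$ for $n \ge 4$, while $\Ext^{-1}_\Cscr(Z,Z) = \Hom_\Cscr(V_\eta,V_\eta) = L$, so all potential obstructions collapse into one class in $\HH^3(L,L)$. Tracking the $A_\infty$-cocycle computing this obstruction identifies it (up to sign) with the image of $[\delta]$ under the isomorphism of Corollary \ref{cor:cor2}, hence it is nonzero by construction. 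Finally, if $Z$ were a direct summand of some $F(\tilde W)$ with $\tilde W \in D^b(\Cscr_L)$, then because $\Ext^{-1}_\Cscr(Z,Z)$ is an $L$-bimodule direct summand of $\Ext^{-1}_\Cscr(F(\tilde W),F(\tilde W))$, the obstruction of $Z$ would be a direct summand of the (vanishing) obstruction of $F(\tilde W)$, hence would vanish, a contradiction.

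\textbf{Main obstacle.} The technical heart is the explicit matching, in Proposition \ref{ref-9.1.1-48}, of the obstruction class for the twisted $Z$ with $[\delta]$ via Corollary \ref{cor:cor2}. This requires unwinding the $A_\infty$-coboundary defining the obstruction and verifying that it agrees with the Hochschild cocycle describing the derivation $\delta$; the rest of the argument is linear algebra over the hereditary categories.
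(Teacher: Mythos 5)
Your extraction of $V_\eta$ via Propositions~\ref{ref-2.5.2-11}, \ref{ref-2.5.7-17}, \ref{ref-3.1-19}, your construction of a twisted $L$-action on $Z = V_\eta \oplus sV_\eta$ via a derivation $\delta$, and your use of the isomorphism $\HH^1(L,\Ext^1_\Cscr(V_\eta,V_\eta)) \cong \HH^3(L,L)$ from Corollary~\ref{cor:cor2} together with $\End_\Cscr(V_\eta)=L$ all coincide with the paper's argument. The divergence, and the gap, is in how you conclude non-liftability.

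You appeal to Proposition~\ref{ref-9.1.1-48} as supplying an ``obstruction class'' in $\bigoplus_{n\geq 3}\HH^n(L,\Ext^{-n+2}_\Cscr(Z,Z))$ whose nonvanishing rules out a lift. But Proposition~\ref{ref-9.1.1-48} is a one-directional statement: it says that if those Hochschild groups vanish then $Z$ lifts. It does \emph{not} define a canonical obstruction element whose nonvanishing prevents lifting. In the proof of Proposition~\ref{ref-9.1.1-48}, the obstruction at stage $n+1$ depends on the choices made at stage $n$ (and indeed the proof explicitly allows modifying $\psi_n$ a posteriori), so there is no well-defined ``the'' obstruction to quote. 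You yourself flag the needed identification of this putative class with $[\delta]$ as the ``technical heart,'' and it is in fact unproved. The paper does not go this route at all: instead, Lemma~\ref{ref-6.2-35} gives an elementary \emph{biconditional} in the hereditary two-step case — $(Z,\phi)$ lies in the essential image of $F$ if and only if $\phi_{21}$ is a coboundary, i.e.\ trivial in $\HH^1(L,\Ext^1_\Cscr(U,V))$ — by a direct computation with the $2\times 2$ lower-triangular matrix structure of $\End_\Cscr(Z)$. This makes the identification with $[\delta]$ a triviality rather than a technical obstacle. Your direct-summand paragraph has an analogous issue: you invoke the same unproved naturality of the Proposition~\ref{ref-9.1.1-48} obstruction, whereas the paper's Lemma~\ref{ref-6.3-39} first applies the truncation $\tau_{\le 0}\tau_{\ge -1}$ (which commutes with $F$) to reduce to the two-step case and then observes that $\phi_{21}$ and the summand piece $\phi_{21}'$ land in disjoint summands of $\HH^1$. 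In short, your plan produces the right object but relies on a converse to Proposition~\ref{ref-9.1.1-48} that the paper never claims and that you do not establish; the missing ingredient is exactly Lemma~\ref{ref-6.2-35}.
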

The proof will occupy the rest of this section.   
For simplicity we will assume that~$Q$ is not the quiver with one vertex and two loops, as this 
case needs a more general argument which we will give in Appendix \ref{ref-A-69}.

We first give a necessary and sufficient condition for an object in $D^b(\Cscr)_L$ to be in the
essential image of $F$ assuming that, after forgetting the $L$-action, it has the form $Z=U\oplus
sV\in D^b(\Cscr)$ for $U,V\in \Cscr$. We do this by specializing the general formulas
from \S\ref{ref-5.2-28}. We have
\[
\Lambda\overset{\text{def}}{=}\End_\Cscr(Z)=
\begin{pmatrix}
\End_\Cscr(U)&0\\
\Ext^1_\Cscr(U,V) & \End_\Cscr(V)
\end{pmatrix}.
\]
An $L$-action on $Z$ is a $k$-algebra morphism 
\def\triv{\operatorname{triv}}
\[
\phi:L\r \Lambda.
\]
We may write
\[
\phi=\begin{pmatrix}
\phi_{11}&0\\
\phi_{21}&\phi_{22}
\end{pmatrix}
\]
where $\phi_{11}$, $\phi_{22}$ represent an action of $L$ on $U$ and
$V$ respectively, so that $(U,\phi_{11})$ and $(V,\phi_{22})$ are in
$\Cscr_L$.  We will denote by $(Z,\phi)$ the corresponding object of
$D^b(\Cscr)_L$.

The condition that $\phi$ is compatible with multiplication 
yields
\begin{align*}
\begin{pmatrix}
\phi_{11}(l_1l_2)&0\\
\phi_{21}(l_1l_2)&\phi_{22}(l_1l_2)
\end{pmatrix}
&=
\begin{pmatrix}
\phi_{11}(l_1)&0\\
\phi_{21}(l_1)&\phi_{22}(l_1)
\end{pmatrix}
\begin{pmatrix}
\phi_{11}(l_2)&0\\
\phi_{21}(l_2)&\phi_{22}(l_2)
\end{pmatrix}\\
&=\begin{pmatrix}
\phi_{11}(l_1)\phi_{11}(l_2)&0\\
\phi_{22}(l_1)\phi_{21}(l_2)+\phi_{21}(l_1)\phi_{11}(l_2)
&\phi_{22}(l_1)\phi_{22}(l_2)
\end{pmatrix}
\end{align*}
In other words,
\[
\phi_{21}:L\r \Ext^1_\Cscr(U,V)
\]
must be a $k$-derivation for the $L$-bimodule structure on $\Ext^1_\Cscr(U,V)$
obtained from the $L$-structures on $U$ and $V$. 
\begin{lemma}
\label{ref-6.2-35} $(Z,\phi)$ as above is in the essential image of $F$
if and only if $\phi_{21}$ is trivial in
\[
\HH^1(L,\Ext^1_\Cscr(U,V)).
\]
\end{lemma}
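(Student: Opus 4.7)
The approach is to describe $L$-actions on $Z = U \oplus sV$ via their matrix entries in the lower-triangular ring $\End_{D^b(\Cscr)}(Z)$, to identify isomorphism in $D^b(\Cscr)_L$ with conjugation by units of this ring, and to translate ``isomorphic to a diagonal action'' into a Hochschild coboundary condition on the off-diagonal entry $\phi_{21}$.

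For sufficiency, if $[\phi_{21}] = 0$ in $\HH^1(L,\Ext^1_\Cscr(U,V))$ then by definition there exists $\eta \in \Ext^1_\Cscr(U,V)$ with $\phi_{21}(l) = \phi_{22}(l)\eta - \eta\phi_{11}(l)$ for every $l \in L$. I would consider the unipotent unit $\psi \in \End_\Cscr(Z)$ with $\psi_{11} = \psi_{22} = 1$ and $\psi_{21} = \eta$, and check by a direct multiplication in the lower-triangular ring $\End_\Cscr(Z)$ that the $(2,1)$-entry of $\psi\,\phi(l)\,\psi^{-1}$ equals $\eta\phi_{11}(l) + \phi_{21}(l) - \phi_{22}(l)\eta$, which vanishes by hypothesis. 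This produces an isomorphism $(Z,\phi) \cong (U,\phi_{11}) \oplus s(V,\phi_{22})$ in $D^b(\Cscr)_L$, whose right-hand side is manifestly the image under $F$ of the corresponding object of $D^b(\Cscr_L)$.

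For necessity, the essential external input is that $\Cscr_L$ is itself hereditary in both settings: $\Mod(kQ)_L = \Mod(LQ)$ is a path algebra over $L$, and $\Qcoh(X)_L = \Qcoh(X_L)$ where $X_L$ is still a smooth curve over $L$. Consequently any $W \in D^b(\Cscr_L)$ splits in $D^b(\Cscr_L)$ as $W^0 \oplus sW^1$ with $W^i \in \Cscr_L$, so $F(W)$ carries a strictly diagonal $L$-action. Given an isomorphism $F(W) \cong (Z,\phi)$ in $D^b(\Cscr)_L$, after identifying $F(W^0) \cong U$ and $F(W^1) \cong V$ in $\Cscr$ the isomorphism becomes a lower-triangular automorphism $\alpha$ of $Z$ with entries $a,b,c$, and the condition that $\alpha$ conjugates $\phi$ to the diagonal action coming from $W$ unwinds after cancellation to the identity $\phi_{21}(l) = \phi_{22}(l)(b^{-1}c) - (b^{-1}c)\phi_{11}(l)$, which exhibits $\phi_{21}$ as the inner derivation associated to $\eta := b^{-1}c$.

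Since Hochschild $1$-cocycles of $L$ with values in an $L$-bimodule are by definition the $k$-derivations, and coboundaries are exactly the inner derivations, both computations above coincide with the statement that $[\phi_{21}]$ vanishes in $\HH^1(L,\Ext^1_\Cscr(U,V))$ for the bimodule structure induced by $\phi_{11}$ and $\phi_{22}$. The main obstacle is the bookkeeping in the necessity direction, where the bimodule structure on $\Ext^1_\Cscr(U,V)$ is altered by the diagonal entries $a, b$ of $\alpha$; this is harmless because $a, b$ simultaneously conjugate $\phi_{11}$ and $\phi_{22}$, so the derivation equation transforms coherently and only the genuine off-diagonal datum $b^{-1}c$ enters the final cohomological obstruction.
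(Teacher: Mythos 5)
Your proof is correct and follows essentially the same route as the paper: both directions rest on translating an isomorphism $(Z,\phi)\cong(Z,\phi_{\mathrm{triv}})$ into conjugation by a lower-triangular unit of $\End_\Cscr(Z)$, showing that this precisely modifies $\phi_{21}$ by the inner derivation $l\mapsto \phi_{22}(l)\eta-\eta\phi_{11}(l)$ with $\eta=\pi_{22}^{-1}\pi_{21}$ (your $b^{-1}c$), and invoking that $\Cscr_L$ is hereditary so that any object of $D^b(\Cscr_L)$ splits as the sum of its shifted cohomologies. The only cosmetic difference is that the paper fixes the identification so that the comparison object is literally $(Z,\phi_{\mathrm{triv}})$, whereas you carry the extra conjugation by $a,b$ and observe it cancels; the content is identical.
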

\begin{proof}
We will write $\phi_{\triv}$ for the trivial action on $Z$ coming 
from the given $L$-action on $U$, $V$ (so that $\phi_{21}=0$).

Assume that $(Z,\phi)$ is in the essential image of $F$. In other words, there exists $Y\in D^b(\Cscr_L)$
such that 
\begin{equation}
\label{ref-6.1-36}
(Z,\phi)\cong FY.
\end{equation}
Since $\Cscr$ is either $\Mod(kQ)$ or $\Qch(X)$, $\Cscr_L$ is of the same type (e.g. $\Mod(kQ)_L=\Mod(LQ)$), hence it is hereditary as well and we have in $D^b(\Cscr_L)$
\[
Y\cong \bigoplus_n s^nH^{-n}(Y).
\]
Furthermore, in $\Cscr_L$ we have 
\[
H^{-n}(Y)=H^{-n}(Z)=
\begin{cases}
U&\text{if $n=0$}\\
V&\text{if $n=1$}\\
0&\text{otherwise}.
\end{cases}
\]
Thus $Y$, considered as an element of $D^b(\Cscr)_L$, is precisely $(Z,\phi_{\triv})$.
In other words $(Z,\phi)\cong FY$ for some $Y\in D^b(\Cscr_L)$ iff there is an
isomorphism in $D^b(\Cscr)_L$
\begin{equation}
\label{ref-6.2-37}
\pi:(Z,\phi)\cong (Z,\phi_{\triv}).
\end{equation}
We may view $\pi$ as a unit in $\Lambda$, and the condition that
$\pi$ is compatible with the $L$-action may be expressed as 
\[
\pi\phi(l)=\phi_{\triv}(l)\pi
\]
for all $l\in L$, i.e.
\begin{equation}
\label{ref-6.3-38}
\phi(l)=\pi^{-1}\phi_{\triv}(l)\pi.
\end{equation}
We now write all conditions explicitly: we have
\[
\phi=\begin{pmatrix}
\phi_{11}&0\\
\phi_{21}&\phi_{22}
\end{pmatrix}
\]
\[
\phi_{\triv}=\begin{pmatrix}
\phi_{11}&0\\
0&\phi_{22}
\end{pmatrix}
\]
\[
\pi
=
\begin{pmatrix}
\pi_{11}&0\\
\pi_{21}&\pi_{22}
\end{pmatrix}
\]
\[
\pi^{-1}
=
\begin{pmatrix}
\pi_{11}^{-1}&0\\
-\pi_{22}^{-1}\pi_{21}\pi_{11}^{-1}&\pi_{22}^{-1}
\end{pmatrix}
\]
and condition \eqref{ref-6.3-38} translates into
\[
\begin{pmatrix}
\phi_{11}&0\\
\phi_{21}&\phi_{22}
\end{pmatrix}
=
\begin{pmatrix}
\pi^{-1}_{11} \phi_{11} \pi_{11} & 0\\
-\pi^{-1}_{22} \pi_{21} \pi^{-1}_{11} \phi_{11} \pi_{11}+\pi^{-1}_{22}\phi_{22}\pi_{21}
&
\pi^{-1}_{22}\phi_{22}\pi_{22}
\end{pmatrix}
\]
which yields 
\begin{align*}
\pi_{11}\phi_{11}&=\phi_{11}\pi_{11}\\
\pi_{22}\phi_{22}&=\phi_{22}\pi_{22}\\
\phi_{21}&=-\pi^{-1}_{22} \pi_{21} \pi^{-1}_{11} \phi_{11} \pi_{11}+\pi^{-1}_{22}\phi_{22}\pi_{21}.
\end{align*}
Taking into account the commutation relation given by the first
two relations, the last one can be written as
\[
\phi_{21}=\phi_{22} (\pi^{-1}_{22}\pi_{21})-(\pi^{-1}_{22}\pi_{21})\phi_{11}.
\]
In other words, the existence of $\pi$ implies that $\phi_{21}$ is an
inner derivation, which means precisely that $\phi_{21}$ is a coboundary in the Hochschild complex. It is easy to see that this implication is 
reversible. 
\end{proof}
\begin{lemma} 
\label{ref-6.3-39}If $(Z,\phi) \in D^b(\Cscr)_L$ as above is not in the essential image of $F$,
  then it is also not 
 a direct summand of an object in the
  essential image of $F$.
\end{lemma}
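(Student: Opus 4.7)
The strategy is to prove the contrapositive: if $(Z,\phi)$ is a direct summand of $FY$ for some $Y \in D^b(\Cscr_L)$, then $(Z,\phi)$ itself lies in the essential image of $F$. By Lemma~\ref{ref-6.2-35}, this reduces to showing that the derivation $\phi_{21}: L \to \Ext^1_\Cscr(U,V)$ is inner.

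Since $\Cscr_L$ is also hereditary, we have $Y \cong \bigoplus_n s^n H^{-n}(Y)$ in $D^b(\Cscr_L)$, and applying the additive functor $F$ yields $FY \cong \bigoplus_n s^n F(H^{-n}(Y))$ in $D^b(\Cscr)_L$. The crucial consequence is that the $L$-action $\phi_Y$ on $FY$ is \emph{block diagonal} with respect to this cohomological decomposition: because the decomposition already holds in $D^b(\Cscr)_L$, the action $\phi_Y(l)$ preserves each summand, and so all its off-diagonal components vanish in $\End_{D^b(\Cscr)}(FY)$.

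Let $i: (Z,\phi) \to FY$ and $p: FY \to (Z,\phi)$ be the inclusion and projection with $pi = \id$. I would expand $i$ and $p$ as matrices of morphisms in $D^b(\Cscr)$ with respect to $Z = U \oplus sV$ and the above decomposition of $FY$. Hereditarity of $\Cscr$ forces every entry to live in either $\Hom_\Cscr$ or $\Ext^1_\Cscr$, while higher composition terms vanish automatically. Expanding the $L$-linearity conditions $\phi_Y i = i\phi$ and $p \phi_Y = \phi p$ componentwise, using the block-diagonality of $\phi_Y$ together with the identity $p_{U,0}\circ i_{0,U} = \id_U$ extracted from $pi=\id$, a direct calculation produces the explicit formula
\[
\phi_{21}(l) \;=\; \phi_{22}(l)\,\eta - \eta\,\phi_{11}(l), \qquad \eta = -\,p_{V,0}\circ i_{0,U} \in \Ext^1_\Cscr(U,V),
\]
where $i_{0,U}: U \to H^0(Y)$ lies in $\Hom_\Cscr$ and $p_{V,0}: H^0(Y) \to sV$ lies in $\Ext^1_\Cscr(H^0(Y),V)$. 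Thus $\phi_{21}$ is an inner derivation, hence a Hochschild coboundary, and Lemma~\ref{ref-6.2-35} implies $(Z,\phi) \in \im F$.

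The main subtlety is that $Y$ may have cohomology in arbitrarily many degrees, producing a profusion of matrix entries for $i$ and $p$. This is defused by observing that every entry involving $H^{-n}(Y)$ with $n \notin \{0,1\}$ either factors through $\Ext^j$ with $j \ge 2$ (and so vanishes by hereditarity of $\Cscr$) or yields an $L$-linearity constraint that does not feed into the formula for $\phi_{21}$. Consequently only the interaction of $(Z,\phi)$ with the two summands $F(H^0(Y))$ and $sF(H^{-1}(Y))$ of $FY$ contributes to $\phi_{21}$, reducing the computation to the two-cohomology case.
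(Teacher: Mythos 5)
Your argument is correct, and it takes a genuinely different route from the paper. The paper assumes $(Z,\phi)\oplus W\cong FY$ and applies the truncation functors $\tau_{\le 0}\tau_{\ge -1}$ (which commute with $F$) to reduce to a situation where both summands have two cohomology degrees, so that Lemma~\ref{ref-6.2-35} applies directly to the sum $(Z\oplus Z',\phi+\phi')$; it then observes that $\phi_{21}$ and $\phi'_{21}$ land in disjoint summands of the group $\HH^1(L,\Ext^1_\Cscr(U\oplus U',V\oplus V'))$, forcing each to vanish separately. You instead bypass the reduction entirely by exploiting the $L$-equivariant decomposition $Y\cong\bigoplus_n s^nH^{-n}(Y)$ in $D^b(\Cscr_L)$ (which is where hereditarity of $\Cscr_L$ enters), expanding the section $i$ and retraction $p$ in matrix form, and extracting an explicit witness $\eta=-p_{V,0}\circ i_{0,U}$ for innerness from the component equations $p_{V,0}\,\rho_0=\phi_{21}\,p_{U,0}+\phi_{22}\,p_{V,0}$, $\rho_0\, i_{0,U}=i_{0,U}\,\phi_{11}$, and $p_{U,0}\,i_{0,U}=\id_U$. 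I verified that your formula comes out exactly right. The trade-off is that the paper's truncation trick is shorter and insulates one from the combinatorics of matrix entries, whereas your version is slightly longer to verify but produces the coboundary explicitly rather than arguing by decomposition of a Hochschild group, and it applies uniformly without first reducing the number of cohomology degrees. Both arguments rest ultimately on the same two facts: hereditarity of $\Cscr_L$ (to decompose $Y$) and Lemma~\ref{ref-6.2-35}.
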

\begin{proof} Assume that $(Z,\phi)$ is not in the essential image of $F$ but there exist
$W\in D^b(\Cscr)_L$, $Y\in D^b(\Cscr_L)$ such that 
\begin{equation}
\label{ref-6.4-40}
(Z,\phi)\oplus W\cong FY.
\end{equation}
Note that the truncation functors $\tau_{\le i}$, $\tau_{\ge i}$
commute with $F$. Applying $\tau_{\le 0}\tau_{\ge -1}$ to
\eqref{ref-6.4-40} we obtain the existence of objects $(Z',\phi')\in
D^b(\Cscr)_L$ ($Z'=U'\oplus sV'$, $U',V'\in \Cscr$),  $Y'\in
D^b(\Cscr_L)$ such that $(Z\oplus Z',\phi+\phi')\cong FY'$. This means
that  $\phi_{21}+\phi'_{21}$ is zero in $\HH^1(L,\Ext^1_{\Cscr}(U\oplus U',V\oplus V'))$.
However, it is clear that $\phi_{21}$ and $\phi'_{21}$ land in different summands
of $\HH^1(L,\Ext^1_{\Cscr}(U\oplus U',V\oplus V'))$. So this implies $\phi_{21}=0$,
which is in contradiction with the fact that $(Z,\phi)$ is not in the essential image of $F$.
\end{proof}
\begin{proof}[Proof of Proposition \ref{ref-6.1-34}]
Now we recall that by \eqref{ref-4.3-24} we have
\[
\HH^1(L,\Ext^1_\Cscr(U,V))\cong \HH^{3}(L,\Hom_\Cscr(U,V)).
\]
Let $\Cscr_{0,L}=\mod(kQ_L)$ or $\coh(X_L)$, depending on whether $\Cscr$ is equal to $\Mod(kQ)$ or $\Qcoh(X)$.
To construct $Z$ as in the statement of the proposition, it suffices by Lemmas \ref{ref-6.2-35}
and \ref{ref-6.3-39}
to find $L$ and $U,V\in \Cscr_{0,L}$ such that
$\HH^3(L,\Hom_{\Cscr}(U,V))\neq 0$. We will in fact produce a finitely generated field extension $L/k$
of transcendence degree $3$ and $U\in \Cscr_{0,L}$ such that $\End_{\Cscr}(U)=L$, and let $V=U$.
This will do what
we want by the Hochschild-Kostant-Rosenberg theorem \cite{HKR}.

Let us first consider the case $\Cscr=\Mod(kQ)$, $Q$ not the two-loop quiver.
  Choose $\alpha$ indivisible, and in the fundamental region as
in Proposition \ref{ref-2.5.7-17}, such that $\dim M_{\alpha,\lambda,Q}\ge 3$. Let $x$ be the generic point of
a three dimensional irreducible subvariety of $M_{\alpha,\lambda,Q}$, and put $L=k(x)$. Let $U=V_x$ as in Proposition \ref{ref-2.5.2-11}. 
Then according to \eqref{ref-2.3-12} we have indeed $\End_{kQ}(U)=L$.

Next consider $\Cscr=\Qcoh(X)$. We choose $r,d$, $\gcd(r,d)=1$ such that $\dim M_{r,d}\ge 3$. Then
we proceed as in the case $\Cscr=\Mod(kQ)$, but now using Proposition \ref{ref-3.1-19}.
\end{proof}
\section{Counterexamples to lifting in the geometric case}
In this section we will prove the following result:
\begin{theorem}
\label{ref-7.1-41}
Let $Y/k$ be a smooth connected projective variety which is not a point, a projective line or an elliptic curve. Then there exists a finitely generated field extension $L/k$
  of transcendence degree $3$ together with an object $Z\in D^b(\Qcoh(Y))_L$ which is not a direct summand of an object in 
the essential image of the forgetful functor $F:D^b(\Qcoh(Y)_L)\r D^b(\Qcoh(Y))_L$. We may in addition
assume that the cohomology of $Z$ lies in $\coh(Y_L)$.
\end{theorem}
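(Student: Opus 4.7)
The plan is to reduce Theorem \ref{ref-7.1-41} to the curve case already handled in Proposition \ref{ref-6.1-34} by embedding a suitable high-genus curve into $Y$ and pushing the counterexample forward. If $\dim Y=1$ then $Y$ is a smooth projective curve which by hypothesis has genus $\ge 2$, so Proposition \ref{ref-6.1-34} applies directly with $\Cscr=\Qcoh(Y)$, and we may therefore assume $\dim Y\ge 2$. In that case I would first fix a sufficiently ample line bundle $\Lscr$ on $Y$ and, using Bertini's theorem together with the adjunction formula, construct a smooth connected closed curve $i\colon C\hookrightarrow Y$ of genus $\ge 2$ as a complete intersection of $\dim Y-1$ general sections of $\Lscr^{\otimes n}$ for $n$ large enough to push the genus past $1$.

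Next I would apply Proposition \ref{ref-6.1-34} to $\Cscr_0=\Qcoh(C)$ to obtain a finitely generated extension $L/k$ of transcendence degree $3$ and an object $Z_C=U_C\oplus sU_C\in D^b(\Qcoh(C))_L$, with $U_C\in\coh(C_L)$ chosen as in Proposition \ref{ref-3.1-19} so that $\End_C(U_C)=L$, together with a nontrivial derivation class $\phi_{21}^C\in \HH^1(L,\Ext^1_C(U_C,U_C))$. Pushing forward, I set $U=V=i_*U_C\in\coh(Y_L)$ and $Z=i_*Z_C\in D^b(\Qcoh(Y))_L$; since $i$ is a closed immersion, $i_*$ is fully faithful on (quasi)coherent sheaves and so $\End_Y(U)=\End_C(U_C)=L$, while the cohomology of $Z$ visibly lies in $\coh(Y_L)$. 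The Koszul spectral sequence
\[
E_2^{p,q}=\Ext^p_C\bigl(U_C,U_C\otimes \wedge^q N_{C/Y}\bigr)\;\Longrightarrow\;\Ext^{p+q}_Y(U,V)
\]
yields a canonical edge inclusion $\Ext^1_C(U_C,U_C)\hookrightarrow \Ext^1_Y(U,V)$ (the row $q=0$ survives to $E_\infty$ since all $d_r$-differentials into $E_r^{1,0}$ originate in $E_r^{1-r,r-1}=0$), and I transport $\phi_{21}^C$ along this inclusion to a derivation $\phi_{21}^Y\colon L\to \Ext^1_Y(U,V)$ which I use to define the $L$-action on $Z$.

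For the final step I would show that $Z$ is not a direct summand of any $F_Y(Y')$. Suppose otherwise; the truncation functors $\tau_{\le 0}\tau_{\ge -1}$ commute with $F_Y$, and their application to a decomposition $F_Y(Y')\cong Z\oplus W$ yields a decomposition with $Z$ still carrying its prescribed $L$-action and $W$ of a similar two-term shape $U'\oplus sV'$. Adapting Lemmas \ref{ref-6.2-35} and \ref{ref-6.3-39} to the present non-hereditary setting, the existence of such a lifting forces $\phi_{21}^Y$ to be a coboundary in $\HH^1(L,\Ext^1_Y(U\oplus U',V\oplus V'))$; projecting onto the $(U,V)$-summand and then along the edge inclusion above (using its naturality with respect to the $L$-bimodule structure) deduces that $\phi_{21}^C$ itself is a coboundary in $\HH^1(L,\Ext^1_C(U_C,U_C))$, contradicting its nontriviality.

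The main obstacle is exactly the non-hereditary character of $\Qcoh(Y)$: unlike in Section \ref{ref-5-26}, an object $Y'\in D^b(\Qcoh(Y)_L)$ with cohomology concentrated in degrees $-1$ and $0$ need not split in $D^b(\Qcoh(Y))$ as $H^0(Y')\oplus sH^{-1}(Y')$, so the clean matrix analysis of Section 6 no longer characterises the essential image verbatim. To circumvent this one must either verify that the higher extensions $\Ext^j_Y(U,V)$ for $j\ge 2$ contribute only to summands of the obstruction space that are orthogonal to the edge image of $\Ext^1_C(U_C,U_C)$, or exploit the lucky vanishing $d_2\colon E_2^{0,1}\to E_2^{2,0}=\Ext^2_C(U_C,U_C)=0$ (because $C$ is a curve), which makes the Koszul filtration of $\Ext^1_Y(U,V)$ explicit enough to track the $L$-bimodule structure and to conclude that the coboundary condition on $Y$ restricts to the coboundary condition on $C$, thereby closing the reduction.
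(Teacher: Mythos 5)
Your reduction-to-a-curve strategy is in the right spirit, but the last step --- deducing non-liftability on $Y$ --- has a genuine gap, and the paper avoids it by an entirely different (and much cleaner) mechanism.

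The gap is exactly the one you flag but do not resolve. You claim that ``adapting Lemmas \ref{ref-6.2-35} and \ref{ref-6.3-39} to the present non-hereditary setting, the existence of such a lifting forces $\phi_{21}^Y$ to be a coboundary.'' This adaptation is not routine: both lemmas rely on the hereditary splitting of two-term complexes, both in $D^b(\Cscr)$ and in $D^b(\Cscr_L)$. For $\dim Y\ge 2$ neither $\Qcoh(Y)$ nor $\Qcoh(Y_L)$ is hereditary, so after applying $\tau_{\le 0}\tau_{\ge -1}$ to a decomposition $F(Y')\cong Z\oplus W$, the objects $\tau_{\le 0}\tau_{\ge-1}Y'$ and $\tau_{\le 0}\tau_{\ge-1}W$ do \emph{not} split into $H^0\oplus sH^{-1}$. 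Thus the cancellation argument that reduces ``direct summand of a lift'' to ``$\phi_{21}$ is an inner derivation'' breaks down. Your two proposed patches (showing the higher $\Ext^j_Y$-contributions are ``orthogonal to the edge image,'' or exploiting $d_2=0$) remain unproved and are not routine consequences of the Koszul spectral sequence; you would essentially have to redo the $A_\infty$ obstruction theory of \S9 for three-term matrices $\Hom,\Ext^1,\Ext^2$ with the $L$-bimodule structure woven in, which is a non-trivial piece of work and is precisely what the paper steers around.

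The paper's proof takes a categorical transport route that never opens the obstruction-theory machinery on $Y$. For $Y=\PP^d$ it uses the partial tilting object $T=\Oscr\oplus\Oscr(1)$, with $A=\End(T)^\circ$ the generalised Kronecker algebra (hereditary), and the adjoint pair $i=T\Lotimes_A-$, $j=\RHom(T,-)$ with $ji=\id$, both commuting with $F$; a decomposition of $F(Y')$ containing $i(Z_0)$ is pushed by $j$ back to a decomposition in $D^b(\Mod A)_L$ containing $Z_0$, contradicting Proposition \ref{ref-6.1-34}. For general $Y$ it chooses a finite flat $\pi:Y\r\PP^d$ and uses that $\Oscr_{\PP^d}$ is a direct summand of $\pi_*\Oscr_Y$ via the trace map, so $R\pi_*\pi^*$ keeps $Z_1$ as a direct summand; applying $R\pi_*$ then transports a decomposition on $Y$ to one on $\PP^d$. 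Your closed-immersion idea could be salvaged along the same lines: since $i_*$ is fully faithful, $i^!i_*\cong\id$, and $i^!$ (which preserves $D^b$ for a regular immersion of smooth projectives and commutes with $F$) plays the role of $j$; then $i_*Z_C\oplus W=F(Y')$ gives $Z_C\oplus i^!W=F(i^!Y')$ in $D^b(\Qcoh(C))_L$, contradicting Proposition \ref{ref-6.1-34}. That transport argument is what is missing from your proposal; the obstruction-theoretic bookkeeping you sketch instead cannot be completed as written.
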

\begin{proof}
If $Y$ is a curve then this follows from Proposition \ref{ref-6.1-34}, so we may assume $\dim Y\ge 2$. We start by considering the case
$Y=\PP^d$, $d\ge 2$. Put $T=\Oscr_Y\oplus \Oscr_Y(1)$ and $A=\End_Y(T)^\circ$. Then $T$ is a partial tilting object and we have functors
\[
D^b(\Mod(A))\xrightarrow[i]{T\Lotimes_A-} D^b(\Qcoh(Y))\xrightarrow[j]{\RHom_Y(T,-)} D^b(\Mod(A))
\]
such that $ji$ is the identity. We can define analogous functors, also denoted by $i$ and $j$, on $D^b(\Mod(A_L))$, $D^b(\Mod(A)_L)$, etc., and $i$, $j$ commute with the functor $F$. Now $A$ is the path algebra of a Kronecker quiver with $d+1\ge 3$ arrows, and so according to Proposition \ref{ref-6.1-34} there exists
a finitely generated field extension $L/k$ of transcendence degree three and an object $Z_0\in D^b(\Mod(A))_L$ with cohomology in $\mod(A_L)$ which is not a direct summand of the image
of an object in $D^b(\Mod(A)_L)$. Put $Z_1=i(Z_0)\in D^b(\Qcoh(Y))_L$. Clearly $Z_1$ has cohomology in $\coh(Y_L)$.
If there exists $Z'\in D^b(\Qcoh(Y))_L$, $Y\in D^b(\Qcoh(Y)_L)$ such that $Z_1\oplus Z'=FY$, 
then applying $j$ we get $Z_0\oplus j(Z')=F(j(Y))$ in $D^b(\Mod(A)_L)$, which we had excluded. This finishes the proof in the case $Y=\PP^d$.

Now let $Y$ be general, choose a finite (necessarily flat) map $\pi:Y\r \PP^d$, and let $Z_1\in D^b(\Qcoh(\PP^d))_L$ be as above.
Set $Z=\pi^\ast Z_1$. 
 If there exists $Z'\in D^b(\Qcoh(Y))_L$, $Y\in D^b(\Qcoh(Y)_L)$ such that $Z\oplus Z'=FY$ 
then applying $R\pi_\ast$ we get $R\pi_\ast\pi^\ast Z_1\oplus R\pi_\ast Z'=F(R\pi_\ast Y)$. Now $R\pi_\ast\pi^\ast Z_1=\pi_\ast \Oscr_Y\otimes_{\Oscr_{\PP^d}} Z_1$ and $\Oscr_{\PP^d}$
is a direct summand of $\pi_\ast\Oscr_Y$ in $\coh(\PP^d)$ (by the trace map).  Hence $Z_1$ is a direct summand of $R\pi_\ast\pi^\ast Z_1$ in $D^b(\Qcoh(\PP^d))_L$, and thus
$Z_1$ is also a direct summand of $F(R\pi_\ast Y)$ in $D^b(\Qcoh(\PP^d))_L$. This is a contradiction with the choice of $Z_1$.
\end{proof}
\begin{example}
  \label{ref-7.2-42} Assume that $Y=\PP_k^3$, $L=k(x,y,z)$ and let
  $p:\PP^3_L\r \PP_k^3$ be obtained by base extension of the structure
  map $\Spec L\r \Spec k$. If we construct $Z_0$ using the object
  $V_\eta$ in Example \ref{ref-2.5.6-16} (see the proof of
  Proposition \ref{ref-6.1-34}) then we find that, after forgetting the $L$-structure, $Z\cong p_\ast R\oplus
  p_\ast sR$,  where $R$ is given by
\[
\operatorname{cone}(\Oscr_{\PP^3_L}(-1)^3 \xrightarrow{(Tx-X,Ty-Y,Tz-Z)} \Oscr_{\PP^3_L})
\]
and $T,X,Y,Z$ are homogeneous coordinates on $\PP^3$.
\end{example}
\section{Non-Fourier-Mukai functors}
Below $X$, $Y$ are smooth connected projective schemes over $k$, although we could get by with substantially less.
Let $i_\eta:\eta\r X$ be the generic point and let $L=k(\eta)$ be the function field of $X$. Assume that
$D^b(\Qcoh(Y))_L$ contains an object $Z$ which is not in the essential image of $D^b(\Qcoh(Y)_L)$. Define
the exact functor $\widetilde{\Psi} : D(\Qcoh(X))\r D(\Qcoh(Y))$ as the composition
\[
\xymatrix{
\widetilde{\Psi}:& D(\Qcoh(X)) \ar[r]^-{i^*_{\eta}} &  D(L) \ar[r]^-{\psi} & D(\Qcoh(Y))
}
\]
where $\psi:D(L) \xrightarrow{} D(\Qcoh(Y))$
 is the unique additive functor commuting with shifts and coproducts
which sends $L$ to $Z$, and is determined on morphisms by the structure of $Z$ as an $L$-object. This functor is exact, because $L$ is a field. 
By construction, $\tilde{\Psi}$ commutes with coproducts.
Let $\Psi:\Perf(X)\r D^b(\Qcoh(Y))$ be the restriction of $\widetilde{\Psi}$ to $\Perf(X)=D^b(\coh(X))$. 
\begin{theorem} \label{ref-8.1-43}
The functor
\[
\Psi:\Perf(X)\r D^b(\Qcoh(Y))
\]
as defined above is not the restriction of a Fourier-Mukai functor $D(\Qcoh(X))\r D(\Qcoh(Y))$ associated
to an object in $D(\Qcoh(X\times Y))$.
\end{theorem}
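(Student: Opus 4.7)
The plan is to argue by contradiction: suppose there exists $K\in D(\Qcoh(X\times Y))$ with $\Psi\cong \Phi_K|_{\Perf(X)}$, where $\Phi_K(-)=Rp_{Y\ast}(Lp_X^\ast(-)\Lotimes K)$. The strategy is to compute $\Phi_K$ at $i_{\eta\ast}L$ in two different ways; on one side this produces $Z$ with its $L$-action, and on the other it produces the derived pushforward of $K|_{Y_L}\in D(\Qcoh(Y_L))$. Since $Y_L\to Y$ is affine, this will provide a bounded lift of $Z$ to $D^b(\Qcoh(Y_L))=D^b(\Qcoh(Y)_L)$, contradicting the hypothesis on $Z$.

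The first step is to upgrade the isomorphism $\Psi\cong \Phi_K|_{\Perf(X)}$ to an isomorphism on all of $D(\Qcoh(X))$. Both $\widetilde{\Psi}=\psi\circ i_\eta^\ast$ and $\Phi_K$ preserve arbitrary coproducts: $i_\eta^\ast$ is a left adjoint, $\psi(-)\cong (-)\otimes_L Z$ is a tensor product over a field, and $Rp_{Y\ast}$ commutes with direct sums because $p_Y$ is proper. Since $\Perf(X)$ is a set of compact generators for $D(\Qcoh(X))$, a standard compact-generation argument promotes the given natural isomorphism on $\Perf(X)$ uniquely to a natural isomorphism $\widetilde{\Psi}\cong \Phi_K$ of functors on $D(\Qcoh(X))$.

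I then evaluate both sides at the quasi-coherent sheaf $i_{\eta\ast}L$. On the $\widetilde{\Psi}$-side, $\widetilde{\Psi}(i_{\eta\ast}L)=\psi(i_\eta^\ast i_{\eta\ast}L)=\psi(L)=Z$, carrying precisely the original $L$-action on $Z$. On the $\Phi_K$-side, flatness of $i_\eta$ gives the flat base change $Lp_X^\ast(i_{\eta\ast}L)\cong (i_\eta\times\id_Y)_\ast\Oscr_{Y_L}$, and combined with the projection formula this yields
\[
\Phi_K(i_{\eta\ast}L)\cong Rp_{Y\ast}(i_\eta\times\id_Y)_\ast(K|_{Y_L})\cong Rq_\ast(K|_{Y_L}),
\]
where $q:Y_L\to Y$ is the natural projection. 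Applying naturality of $\widetilde{\Psi}\cong \Phi_K$ to the multiplication-by-$\lambda$ endomorphisms of $i_{\eta\ast}L$ for $\lambda\in L$ ensures that both identifications respect the $L$-structures.

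The conclusion is then immediate: $q$ is an affine morphism, so $Rq_\ast=q_\ast$ is exact and conservative on $D(\Qcoh(Y_L))$; the isomorphism $q_\ast(K|_{Y_L})\cong Z$ in $D^b(\Qcoh(Y))$ therefore forces $K|_{Y_L}\in D^b(\Qcoh(Y_L))$. Under the identification $\Qcoh(Y)_L\cong \Qcoh(Y_L)$, the functor $q_\ast$ is precisely the forgetful functor $F$, so $F(K|_{Y_L})\cong Z$ in $D^b(\Qcoh(Y))_L$, contradicting the hypothesis on $Z$. The main technical obstacle is the first step, namely extending the isomorphism via compact generation and then verifying that the extension really does intertwine the $L$-actions on endomorphisms of $i_{\eta\ast}L$; once this is in place, the base change computation and the conservativity of the affine pushforward deliver the contradiction automatically.
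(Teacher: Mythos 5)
Your proposal follows the same high-level architecture as the paper's proof (extend the isomorphism beyond $\Perf(X)$, evaluate at $i_{\eta\ast}L$, use flat base change and the projection formula to pass to $Y_L$, then use exactness of the affine pushforward to produce a bounded lift of $Z$), and steps 3--4 of your outline correctly reproduce what the paper proves in Lemma \ref{ref-8.3-46} and the end of the proof of Theorem \ref{ref-8.1-43}, including the boundedness point via conservativity of $q_\ast$. The genuine gap is in your step 2. There is no \emph{standard} argument that promotes a natural isomorphism between two coproduct-preserving exact functors defined on a compactly generated triangulated category from agreement on compacts to agreement on all objects. At the purely triangulated level this extension problem is precisely where coherence obstructions live; a natural isomorphism on compacts need not lift to a DG natural isomorphism, and without such a lift there is no reason the triangulated natural isomorphism patches along homotopy colimits. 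Asserting the conclusion via ``compact generation'' and then invoking ``naturality'' of the (unjustified) extended isomorphism at the non-compact object $i_{\eta\ast}L$ is circular: the $L$-compatibility you need at $i_{\eta\ast}L$ is exactly the naturality you have not yet established outside $\Perf(X)$.

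The paper's Lemma \ref{ref-8.2-44} replaces this with a two-part argument that you should adopt. First, $\ker(i_\eta^\ast)\subset D(\Qcoh(X))$ is generated by \emph{compact} objects (this is proved by reducing to the affine case via the Bondal--Van den Bergh/Neeman induction, where the complexes $R\xrightarrow{s} R$ generate the kernel). Since $\Phi_K$ agrees with $\Psi$ on $\Perf(X)$, it kills these compact generators, and since it preserves coproducts it kills the whole kernel; therefore $\Phi_K$ factors through the Verdier quotient $D(L)$. Second, once both $\Phi_K$ and $\widetilde{\Psi}$ factor as exact coproduct-preserving functors out of the semisimple category $D(L)$, each is determined by the image of $L$ together with its $L$-action, and the two $L$-structures on $\phi(L)$ and $\psi(L)$ are identified by writing any scalar $f\in L$ as $i_\eta^\ast(g)\circ i_\eta^\ast(h)^{-1}$ for morphisms $g,h:\Oscr_X(-nE)\to\Oscr_X$ of perfect complexes and then using the given naturality \emph{only on} $\Perf(X)$ (see the commutative diagram \eqref{ref-8.1-45}). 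If you replace your step 2 with this factorization-plus-ratio argument, the rest of your proposal goes through.
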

Taking $Y$, $Z$, $L$ as in Theorem \ref{ref-7.1-41}, and letting $X$ be a smooth projective model for $L$, i.e. such that $K(X)=L$, gives a counterexample to Proposition \ref{ref-A-0} in the introduction
if we drop the condition that $\Psi$ is fully faithful. By taking $Z$ as in Example \ref{ref-7.2-42} we get a counterexample where $X=Y=\PP^3$.

\medskip

We will give the proof of Theorem \ref{ref-8.1-43} below, after some preparatory lemmas.
\begin{lemma}
\label{ref-8.2-44}
Assume that
\[
\Phi:D(\Qcoh(X))\r D(\Qcoh(Y))
\]
is an exact functor, commuting with coproducts, whose restriction to $\Perf(X)$ is naturally equivalent
to $\Psi$. Then $\Phi$ is naturally equivalent to $\widetilde{\Psi}$.
\end{lemma}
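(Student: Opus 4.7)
The plan is to extend the given natural equivalence $\alpha\colon \Phi|_{\Perf(X)} \xrightarrow{\sim} \widetilde{\Psi}|_{\Perf(X)}$ to a natural equivalence on the full derived category $D(\Qcoh(X))$, exploiting that $X$ is smooth and projective, so that $D(\Qcoh(X))$ is compactly generated with compact objects exactly $\Perf(X) = D^b(\coh(X))$, and that both $\Phi$ and $\widetilde{\Psi}$ preserve arbitrary coproducts.

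The first step is to fix a single compact generator $G \in \Perf(X)$ (e.g.\ $G = \bigoplus_{i=0}^{N} \Oscr_X(i)$ for a projective embedding) and to invoke the standard d\'evissage: every $M \in D(\Qcoh(X))$ can be written as a homotopy colimit $M \cong \operatorname{hocolim}_n P_n$ of a sequence $P_0 \to P_1 \to \cdots$ of perfect complexes, built iteratively by attaching coproducts of shifts of $G$ that map into $M$ and taking cones to kill successive cohomology. Since $\Phi$ and $\widetilde{\Psi}$ are both triangulated and preserve arbitrary coproducts, they preserve such sequential homotopy colimits. Applying $\alpha$ termwise produces a compatible family of isomorphisms $\Phi(P_n) \cong \widetilde{\Psi}(P_n)$, and passing to the homotopy colimit yields a pointwise isomorphism $\Phi(M) \cong \widetilde{\Psi}(M)$.

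The main obstacle is to make this pointwise isomorphism functorial in $M$, since the d\'evissage presentation is not canonically functorial in the plain triangulated setting, and the homotopy colimit is only defined up to non-canonical isomorphism. I would resolve this in one of two ways: either by a Brown-representability style argument, to the effect that a natural transformation between coproduct-preserving exact functors out of a compactly generated triangulated category is determined by, and uniquely extends from, its restriction to the compact objects; or by passing to a DG-enhancement of $D(\Qcoh(X))$ and $D(\Qcoh(Y))$, which for $D(\Qcoh(X))$ is essentially unique by the results of Lunts--Orlov \cite{OrlovLunts}. On enhanced level, a coproduct-preserving DG-functor out of the derived category of a compactly generated DG-category is determined up to quasi-equivalence by its restriction to the compact subcategory, so $\alpha$ extends automatically to a natural equivalence $\Phi \simeq \widetilde{\Psi}$.

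Either route promotes the pointwise isomorphism constructed via the d\'evissage step to the desired natural equivalence, completing the proof. The heart of the argument really is the naturality step; the existence of the isomorphism on each individual $M$ is essentially forced by the hypotheses that both functors preserve coproducts and agree on a set of compact generators.
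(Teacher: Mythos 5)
The heart of your argument — that a pointwise isomorphism built from a d\'evissage tower can be upgraded to a natural equivalence by ``general compact-generation nonsense'' — is exactly where the proof breaks down, and neither of your two proposed rescues actually works. There is no Brown-representability theorem asserting that a natural transformation between coproduct-preserving exact functors on a compactly generated triangulated category extends uniquely from the compact objects: Brown representability concerns representability of homological functors and existence of adjoints, not extension of natural transformations, and in a plain triangulated setting phantom phenomena are precisely what obstruct such an extension. The DG-enhancement route is also unavailable here, because in Lemma~\ref{ref-8.2-44} the functor $\Phi$ is only assumed to be an exact coproduct-preserving triangulated functor -- it is \emph{not} assumed to come from a DG functor, and assuming otherwise would be a substantial additional hypothesis (indeed, the whole thrust of this paper is to exhibit exact functors that do not admit such enhancements; moreover, even an enhanced $\Phi$ with a triangulated-level isomorphism to $\widetilde\Psi$ on compacts need not be DG-isomorphic to it, because of higher coherence obstructions).

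The paper's actual proof sidesteps all of this by exploiting the very special structure of $\widetilde\Psi$. First it shows that the kernel of $i^\ast_\eta:D(\Qcoh(X))\to D(L)$ is generated by compact objects (a reduction to the affine case via the argument of Neeman/Bondal--Van den Bergh, and there by the two-term complexes $R\xrightarrow{s}R$ for $s\in R\setminus\{0\}$); since $\Psi$ kills those compacts and $\Phi$ preserves coproducts, $\Phi$ vanishes on the entire kernel and therefore factors through $D(L)$ as $\phi\circ i^\ast_\eta$. This reduces everything to comparing two coproduct-preserving exact functors on $D(L)$, which is a category with no phantoms where everything is a coproduct of shifts of $L$ -- so it suffices to compare $\phi(L)$ and $\psi(L)$ as $L$-objects in $D(\Qcoh(Y))$. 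That last step still requires care: one gets an isomorphism $\sigma:\psi(L)\cong\phi(L)$ for free (both are images of $\Oscr_X$), but proving that $\sigma$ commutes with the $L$-action takes an explicit computation, writing each $f\in L$ as $i^\ast_\eta(g)\circ i^\ast_\eta(h)^{-1}$ for morphisms $g,h:\Oscr_X(-nE)\to\Oscr_X$ and chasing through a diagram. Your proposal does not contain either the factorization step or this $L$-linearity verification, and without them there is no working argument.
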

\begin{proof} We first claim that $\Phi$ factors uniquely as 
\begin{equation}
\label{eq:factorization}
D(\Qcoh(X))\xrightarrow{i^\ast_\eta} D(L)\xrightarrow{\phi} D(\Qcoh(Y)),
\end{equation}
where $\phi$ is an exact functor commuting with coproducts. To see
this note that the first arrow $i^\ast_\eta$ is a Verdier localization
at the the full subcategory $\Cscr$ of $D(\Qcoh(X))$ spanned by objects $\Mscr$
such that $i_\eta^\ast\Mscr=0$. We claim that $\Cscr$ is generated by objects
which are compact in $D(\Qcoh(X))$ (i.e.\ perfect complexes). 

 By following
the inductive procedure of the proof of \cite[Prop. 2.5]{Neeman1} (also
\cite[Thm 3.1.1]{BondalVdB}) one reduces this claim to the case that $X=\Spec R$.
In that case $D(\Qcoh(X))=D(R)$ and $L$ is the quotient field of $R$. 
The complexes $M(s)=R\xrightarrow{s\times} R$, $s\in R-\{0\}$  
are generators for the kernel of $D(R)\xrightarrow{L\otimes_R-} D(L)$.
Indeed if $N$ is in this kernel and is right orthogonal to all shifts of all $M(s)$ for $s\in R-\{0\}$ then for 
all such $s$ and all $i$ the map $H^i(N)\xrightarrow{s\times} H^i(N)$ is an isomorphism. On the other hand
since $H^i(N)$ is annihilated by $L$, a non-zero element of $H^i(N)$ is annihilated
by some $s\in R-\{0\}$ which impossible. Thus $H^i(N)=0$ for all $i$ and hence $N=0$.

Since the restriction of $\Phi$ to the perfect complexes in $\Cscr$ is equal to $\Psi$
and since $\Psi$ annihilates such complexes and 
furthermore  since $\Phi$ preserves coproducts, $\Phi$ vanishes on $\Cscr$. Thus the asserted
factorization \eqref{eq:factorization} follows.

Thus it suffices to prove that $\psi$ and $\phi$ are naturally equivalent, given that
we know that $\psi\circ i^\ast_\eta$ and $\phi\circ i^\ast_\eta$ are naturally equivalent when restricted to $\Perf(X)$.
 For this 
we must show that $\psi(L)$ and $\phi(L)$ are isomorphic as $L$-objects in $D(\Qcoh(Y))$.

Now we have $L=i^\ast_\eta(\Oscr_X)$, and so
\[
\psi(L)=(\psi\circ i^\ast_\eta)(\Oscr_X)\cong (\phi\circ i^\ast_\eta)(\Oscr_X)=\phi(L).
\]
So we certainly have an isomorphism $\sigma:\psi(L)\cong \phi(L)$ in $D(\Qcoh(Y))$. To prove that this isomorphism is
compatible with the $L$-structure, we observe that any map $f:L\r L$ is of the form
$i^\ast_\eta(g)\circ i^\ast_\eta(h)^{-1}$ where $g,h$ are morphisms $\Oscr_X(-nE)\r \Oscr_X$ with $E$ an ample divisor and 
$h$ non-zero. Thus we get a diagram
\begin{equation}
\label{ref-8.1-45}
\xymatrix{
\psi(L)\ar@/^2em/[rrr]^\sigma
\ar[dd]_{\psi(f)}\ar@{=}[r] 
&(\psi\circ i^\ast_\eta)(\Oscr_X)\ar[r]^{\cong} \ar[d]|{(\psi \circ i^\ast_\eta)(h)^{-1}}
&(\phi\circ i^\ast_\eta)(\Oscr_X) \ar@{=}[r]\ar[d]|{(\phi \circ i^\ast_\eta)(h)^{-1}}
&\phi(L)\ar[dd]^{\phi(f)}
\\
&(\psi\circ i^\ast_\eta)(\Oscr_X(-nE))\ar[r]^{\cong} \ar[d]|{(\psi \circ i^\ast_\eta)(g)} 
 &(\phi\circ i^\ast_\eta)(\Oscr_X(-nE))\ar[d]|{(\phi \circ i^\ast_\eta)(g)}\\
\psi(L)\ar@{=}[r]\ar@/_2em/[rrr]_\sigma &(\psi\circ i^\ast_\eta)(\Oscr_X)\ar[r]_{\cong} &(\phi\circ i^\ast_\eta)(\Oscr_X) \ar@{=}[r]&\phi(L)
}
\end{equation}
where:
\begin{enumerate}
\item the leftmost rectangle is commutative, since it is obtained by applying $\psi$ to
$f=i^\ast_\eta(g)\circ i^\ast_\eta(h)^{-1}$;
\item the rightmost rectangle is commutative for the same reason;
\item the lower middle rectangle is commutative, since it is obtained from the natural
isomorphism $\psi\circ i^\ast_\eta\cong \phi\circ i^\ast_\eta$;
\item the upper middle rectangle is commutative, since it is obtained from inverting the
vertical arrows in the 
commutative diagram
\[
\xymatrix{
(\psi\circ i^\ast_\eta)(\Oscr_X)\ar[r]^{\cong} 
&(\phi\circ i^\ast_\eta)(\Oscr_X) \\
(\phi\circ i^\ast_\eta)(\Oscr_X(-nE))\ar[u]^{(\psi \circ i^\ast_\eta)(h)} \ar[r]_{\cong}&(\phi\circ i^\ast_\eta)(\Oscr_X(-nE)) \ar[u]_{(\phi \circ i^\ast_\eta)(h)}.
}
\]
\end{enumerate}
It follows that the outer rectangle in \eqref{ref-8.1-45} is commutative, and hence $\sigma$ is indeed
compatible with the $L$-structure.
\end{proof}

\begin{lemma}
\label{ref-8.3-46}
Assume that
\[
\Phi:D(\Qcoh(X))\r D(\Qcoh(Y))
\]
is a Fourier-Mukai functor. Then the $L$ object $(\Phi\circ i_{\eta\ast})(L)$ in $D(\Qcoh(Y))$
lies in the essential image of $F:D(\Qcoh(Y)_L)\r D(\Qcoh(Y))_L$.
\end{lemma}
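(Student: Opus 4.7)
The plan is to compute $(\Phi \circ i_{\eta\ast})(L)$ directly from the Fourier–Mukai kernel and recognize it as $F$ applied to the pullback of that kernel along $\tilde{i}_\eta \colon Y_L \to X \times Y$, where $Y_L = Y \times_k \Spec L$.

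Write $\Phi = \Phi_K$ with kernel $K \in D(\Qcoh(X\times Y))$, so $\Phi(-) = Rp_{Y\ast}(K \Lotimes p_X^\ast(-))$. Form the cartesian square obtained by base-changing $i_\eta$ along $p_X$, with $\tilde{i}_\eta \colon Y_L \to X\times Y$ the top horizontal map and $q := p_Y \circ \tilde{i}_\eta \colon Y_L \to Y$ the natural projection. The inclusion $i_\eta$ is flat (a localization) and affine ($\Spec L$ is affine and $X$ is separated), hence so is $\tilde{i}_\eta$; likewise $q$ is affine as a base change of $\Spec L \to \Spec k$. Flat base change gives $p_X^\ast i_{\eta\ast} L \cong \tilde{i}_{\eta\ast}\Oscr_{Y_L}$, and the projection formula for $\tilde{i}_\eta$ then yields
\[
K \Lotimes p_X^\ast i_{\eta\ast} L \;\cong\; \tilde{i}_{\eta\ast}\bigl(\tilde{i}_\eta^\ast K\bigr).
\]
Applying $Rp_{Y\ast}$, and using $R\tilde{i}_{\eta\ast} = \tilde{i}_{\eta\ast}$ together with $p_Y \circ \tilde{i}_\eta = q$, this collapses to
\[
\Phi(i_{\eta\ast} L) \;\cong\; Rq_\ast(\tilde{i}_\eta^\ast K).
\]

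Since $q$ is affine, $Rq_\ast = q_\ast$ is exact, and under the tautological equivalence $\Qcoh(Y_L) \cong \Qcoh(Y)_L$ it agrees with the forgetful functor $\Qcoh(Y)_L \to \Qcoh(Y)$; at the derived level, equipped with the induced $L$-action on the output, this is precisely $F$. Setting $Z_0 := \tilde{i}_\eta^\ast K \in D(\Qcoh(Y_L)) \cong D(\Qcoh(Y)_L)$, we conclude that $\Phi(i_{\eta\ast} L) \cong F(Z_0)$, which is in the essential image of $F$.

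The one point requiring care is the compatibility of $L$-actions on both sides: the $L$-structure on $\Phi(i_{\eta\ast} L)$ is inherited from the tautological $L$-action on $L \in D(L)$ via the functors $i_{\eta\ast}$ and $\Phi$, whereas the $L$-structure on $F(Z_0)$ arises from the $L$-factor of $Y_L$ acting on $\Oscr_{Y_L}$. These coincide because each of the natural isomorphisms used—flat base change, the projection formula, and the identification of $q_\ast$ with the forgetful functor—is $L$-linear with respect to the action coming from $\Spec L$. Verifying this equivariance is the main technical check, but it is essentially formal from the naturality of the structural maps involved.
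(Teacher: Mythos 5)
Your proof is correct and follows essentially the same route as the paper: flat base change along $p_X$, the projection formula for $i_\eta\times\id$, and the identification of pushforward from $\Spec L\times Y$ to $Y$ with the forgetful functor $F$ via the tautological equivalence $\Qcoh(\Spec L\times Y)\cong\Qcoh(Y)_L$. The only presentational difference is that the paper first establishes $\Phi_V\circ i_{\eta\ast}\cong\Phi_{V_\eta}$ as a natural isomorphism of functors on $D(L)$ and only then evaluates at $L$, so the $L$-equivariance you carefully flag at the end is automatic from naturality rather than requiring a separate check.
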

\begin{proof}
Assume that $\Phi$ is isomorphic to the Fourier-Mukai functor $\Phi_V$ with kernel $V\in D(\Qcoh(X\times Y))$, i.e. $\Phi_V = Rp_{2*}(V\stackrel{L}{\otimes}Lp_1^*(\cdot))$. 
Consider the object $V_\eta\in D(\Qcoh(\Spec L\times Y))$ given by  $V_\eta=(i_{\eta}\times\id)^*V$. Below we show that there is a natural isomorphism
\begin{equation}
\label{ref-8.2-47}
\Phi_V\circ i_{\eta\ast}\cong \Phi_{V_\eta}
\end{equation}
as functors $D(L)\r D(\Qcoh(Y))$.

So it suffices to show that the $L$ object $\Phi_{V_\eta}(L)=Rp_{2\ast} V_\eta$ in $D(\Qcoh(Y))$ lies in the essential image of $F$. Now 
there is a canonical identification $c:\Qcoh(\Spec L\times Y))\r \Qcoh(Y)_L$ which fits in 
a  commutative diagram
\[
\xymatrix{
D(\Qcoh(\Spec L\times Y))\ar[d]_{c}\ar[r]^-{Rp_{2\ast}} & D(\Qcoh(Y))_L\ar@{=}[d]\\
D(\Qcoh(Y)_L)\ar[r]_F& D(\Qcoh(Y))_L.
}
\]
Thus we find $Rp_{2\ast} V_\eta=F(cV_\eta)$ which proves what we want.

\medskip

Now we verify \eqref{ref-8.2-47}. Consider the morphisms
\[
\xymatrix{
&& D(\Qcoh(X\times Y))\ar@/^7mm/[ddrr]^{Rp_{2*}} \\
&& D(\Qcoh(\Spec L\times Y)) \ar[u]^-{R(i_{\eta}\times \id)_*} \ar@/^5mm/[drr]^{R{{p}}_{2*}} 
\\
D(\Qcoh(X))\ar@/^5mm/[uurr]^-{Lp_1^*} & D(L)\ar@/^4mm/[ur]^-{L{{p}}_1^*} \ar[l]_-{i_{\eta\ast}}& &
& D(\Qcoh(Y))
}.
\]
We have
\begin{align*}
(\Phi_V\circ i_{\eta\ast})(-) &= Rp_{2*}(Lp_1^*(i_{\eta\ast}(-))\stackrel{L}{\otimes} V) \\
&= Rp_{2*}((i_{\eta}\times\id)_*(Lp_1^*(-))\stackrel{L}{\otimes} V) \\
&= Rp_{2*} (i_{\eta}\times\id)_* (Lp_1^*(-)\stackrel{L}{\otimes} (i_{\eta}\times\id)^*V) \\
&= Rp_{2*} (Lp_1^*(-)\stackrel{L}{\otimes} (i_{\eta}\times\id)^*V)\\
&=\Phi_{V_\eta}(-)
\end{align*}
The second equality is flat base change for $p_1:X\times Y\r X$. The third equality is the projection formula \cite[Prop.\ 3.9.4]{Lipman} for $i_\eta\times \id$.
\end{proof}
\begin{proof}[Proof of Theorem \ref{ref-8.1-43}] Assume that that $\Psi$ is the restriction of a Fourier-Mukai functor $\Phi:D(\Qcoh(X))\r D(\Qcoh(Y))$. According to
Lemma \ref{ref-8.2-44} we have $\Phi\cong \widetilde{\Psi}$. According to Lemma \ref{ref-8.3-46} $(\Phi\circ i_{\eta\ast} )(L)\cong (\widetilde{\Psi}\circ i_{\eta\ast})(L)$ is in the essential image
of $D(\Qcoh(Y))_L$. But since $(\widetilde{\Psi}\circ  i_{\eta\ast})(L)=(\psi \circ i^\ast_\eta\circ i_{\eta\ast})(L)=\psi(L)=Z$, this is a contradiction.
\end{proof}
\section{Lifting using $A_\infty$-actions}
From now on we only assume that $k$ is a field. We will prove Proposition \ref{ref-B-1} stated in the introduction.
The results from this section were also used in the proof of Theorem \ref{ref-5.2.3-32}.
For the benefit of the reader we will provide some preliminary material concering $A_\infty$-actions.
\subsection{Introduction}
A graded category $\Ascr$ is a category enhanced in the category of
graded $k$-vector spaces. To stress the grading we will sometimes write
$\underline{\Hom}_\Ascr(-,-)$ to denote the $\Hom$-spaces. We denote
the part of degree zero of $\underline{\Hom}_\Ascr(-,-)$ by $\Hom_\Ascr(-,-)$.

Let
$\Cscr$ be a $k$-linear Grothendieck category. The category of complexes over~$\Cscr$ (denoted by~$C(\Cscr)$) is
a DG-category, and, in particular, a graded category. To simplify the notation we write $\underline{\Hom}_{\Cscr}$
for $\underline{\Hom}_{C(\Cscr)}$ to denote the morphism complex, and similarly for $\Hom_\Cscr$ for $\Hom_{C(\Cscr)}$.
Let $B$ be a
DG-algebra over $k$ (at first reading one may assume that $B$ is just
an algebra, concentrated in degree zero). We define the
DG-category $C(B,\Cscr)$ as the category of pairs $(M,\rho_M)$, where $M\in C(B,\Cscr)$ and $\rho_M:B\to\underline{\Hom}_{\Cscr}(M,M)$ is a DG-algebra morphism giving the $B$-action on $M$. We put
\[
D(B,\Cscr)=Z^0(C(B,\Cscr))[\text{Qis}^{-1}].
\]
The construction of $D(B,\Cscr)$ represents no set-theoretic difficulties since
it may be obtained from a model structure on $C(B,\Cscr)$ \cite[Prop.\ 5.1]{lowenvdb2}\footnote{The
proof of this result is based on \cite{Beke}.}. $D(B,\Cscr)$ can be identified with $D(\Cscr_B)$ when $B$ is concentrated in degree zero.

If $\Ascr$ is an arbitrary graded category and $B$ is a graded $k$-algebra, 
we may define the category $\Ascr_{B}$ whose objects are the objects in $\Ascr$
equipped with a $B$-action. 

Let us go back to the case of a DG-algebra $B$ over $k$. There is an obvious functor
\[
F:D(B,\Cscr)\r D(\Cscr)_{H^\ast(B)},
\]
where $H^*(B)$ is the graded $k$-algebra $\oplus_{i\in\ZZ}H^i(B)$, with the multiplication induced by the multiplication on $B$. The functor $F$ is obtained by noticing that $\rho_M:B\to \underline{\Hom}_{D(\Cscr)}(M,M)$ factors through $H^*(B)$ since coboundaries are homotopic to zero.

Below we give proofs of the following results:
\begin{propositions} 
\label{ref-9.1.1-48}
Let $M\in D(\Cscr)_{H^\ast(B)}$ be such that for $n\ge 3$
\[
\HH^n(H^\ast(B),\Ext^\ast_\Cscr(M,M))_{-n+2}=0.
\]
Then there exists an object $\tilde{M}$ in $D(B,\Cscr)$ together
with an isomorphism $F(\tilde{M})\cong M$ in $D(\Cscr)_{H^\ast(B)}$.
\end{propositions}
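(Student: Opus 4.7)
The plan is to use $A_\infty$-obstruction theory, lifting the given graded $H^*(B)$-action on $M$ first to an $A_\infty$-action and then rectifying it to a strict DG $B$-action on a quasi-isomorphic complex. To set this up, first replace $M$ by a K-projective resolution in $C(\Cscr)$, so that the DG endomorphism algebra $E = \underline{\Hom}_\Cscr(M,M)$ has cohomology $\Ext^*_\Cscr(M,M)$. By Kadeishvili's homotopy transfer theorem, both $H^*(B)$ and $\Ext^*_\Cscr(M,M)$ acquire minimal $A_\infty$-algebra structures quasi-isomorphic to $B$ and $E$ respectively, together with explicit $A_\infty$-quasi-isomorphisms to $B$ and $E$.

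The $H^*(B)$-object structure on $M$ is, by definition, a morphism $f_1 : H^*(B) \to \Ext^*_\Cscr(M,M)$ of graded algebras. The key step is to extend $f_1$ to an $A_\infty$-morphism $f = (f_n)_{n \geq 1}$ between the two minimal $A_\infty$-structures, where $f_n$ has internal degree $1-n$. Proceeding inductively, the order-$n$ defining equation of an $A_\infty$-morphism has the shape $d_{\mathrm{Hoch}}\, f_n = o_n$, where $o_n \in C^n(H^*(B), \Ext^*_\Cscr(M,M))$ is an explicit cochain of internal degree $2-n$ built from $f_1, \ldots, f_{n-1}$ together with the higher products $m_k$ on $H^*(B)$ and on $\Ext^*_\Cscr(M,M)$. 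A standard manipulation of the $A_\infty$-relations on source and target shows that $o_n$ is a Hochschild cocycle, so its class in $\HH^n(H^*(B), \Ext^*_\Cscr(M,M))_{2-n}$ is the obstruction to finding $f_n$. For $n=2$ this vanishes because $f_1$ is an algebra morphism; for $n \geq 3$ it vanishes by hypothesis, and the induction goes through.

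Composing the resulting $A_\infty$-morphism with the $A_\infty$-quasi-isomorphism $\Ext^*_\Cscr(M,M) \to E$ and pre-composing with a homotopy inverse of the $A_\infty$-quasi-isomorphism $H^*(B) \to B$ produces an $A_\infty$-morphism $B \to E$, i.e.\ an $A_\infty$-$B$-action on $M$. Via the bar--cobar adjunction --- equivalently, by replacing $M$ with the twisted complex whose underlying object is $B \otimes M$ and whose differential is built from $f$ --- this rectifies to an honest DG $B$-module structure on a complex quasi-isomorphic to $M$ in $C(\Cscr)$. This gives the desired $\tilde M \in D(B, \Cscr)$, and by construction the underlying $H^*(B)$-action on $F(\tilde M)$ agrees with $f_1$, yielding the required isomorphism in $D(\Cscr)_{H^*(B)}$.

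The main technical difficulty lies in the inductive step: one must verify both that the obstruction cochain $o_n$ actually lands in internal degree $2-n$ and that it is genuinely a Hochschild cocycle. This is a sign- and bookkeeping-heavy computation using the $A_\infty$-relations on both sides, but the essential input --- the vanishing of the prescribed Hochschild groups --- is precisely the hypothesis of the proposition.
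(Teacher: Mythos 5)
Your argument is correct and follows the same circle of ideas as the paper: one lifts the given graded-algebra morphism $H^\ast(B)\to\Ext^\ast_\Cscr(M,M)$ to an $A_\infty$-morphism by an order-by-order obstruction argument in Hochschild cohomology, obtains an $A_\infty$-$B$-module structure on $M$, and then rectifies to an honest DG $B$-module. The route you take through the intermediate step is, however, a bit different. The paper never passes through Kadeishvili minimal models: it picks a fibrant replacement of $M$, sets $A=\underline{\Hom}_\Cscr(M,M)$, chooses a graded chain map $\phi\colon B\to A$ lifting the given map on cohomology, and constructs an $A_\infty$-morphism $\psi\colon B\to A$ with $\psi_1=\phi$ directly at the DG level (Lemma \ref{ref-9.3.1-56}); the obstruction $\bar D_{n+1}$ appears on representatives, and one uses the freedom to retroactively modify $\psi_n$ by a $[b_1,-]$-cocycle to kill its class. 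Working between minimal $A_\infty$-models as you do makes the obstruction cochain live directly in cohomology, so no retroactive modification is needed, at the price of invoking homotopy transfer and the existence of $A_\infty$-quasi-inverses. Both routes converge at the end: the rectification to $D(B,\Cscr)$ is, in the paper, the equivalence $D(B,\Cscr)\simeq D_\infty(B,\Cscr)$ of Lemma \ref{ref-9.2.1-52}, which is implemented by the bar-type construction you allude to (your description ``$B\otimes M$'' is a slight oversimplification --- the underlying object of the bar resolution is $(\BB B)^+\otimes M$ with the twisted differential). Two small imprecisions worth fixing: the obstruction class you form from $f_1,\dots,f_{n-1}$ actually lives in $\HH^n(H^\ast(B),\Ext^\ast_\Cscr(M,M))_{2-n}$ and governs the extension by $f_{n-1}$, not $f_n$; and at arity two there is no Hochschild obstruction class to vanish --- the arity-two $A_\infty$ relation \emph{is} the statement that $f_1$ is a graded algebra morphism, which is given. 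These do not affect the substance of the argument.
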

\begin{propositions}
\label{ref-9.1.2-49}
Let $M,N\in D(B,\Cscr)$ be such that for $n\ge 2$
\[
\HH^n(H^\ast(B),\Ext^\ast_\Cscr(M,N))_{-n+1}=0.
\]
Then the map
\[
\Hom_{D(B,\Cscr)}(M,N)\r\Hom_{D(\Cscr)_{H^\ast(B)}}(FM,FN)
\]
is surjective.
\end{propositions}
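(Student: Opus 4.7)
The plan is to derive this fullness from a hyper-Hochschild spectral sequence, paralleling the existence statement of Proposition \ref{ref-9.1.1-48}. First I would extend Lemma \ref{ref-4.2-22} to the DG setting---the proof carries over unchanged, using the projective model structure on DG $B\otimes_k B^\circ$-modules and the injective model structure on DG $\Cscr$-complexes---to obtain
\[
\RHom_{D(B,\Cscr)}(M, N) \cong \RHom_{B \otimes_k B^\circ}(B, \RHom_{\Cscr}(M, N)),
\]
where $\RHom_\Cscr(M, N)$ is regarded as a DG $B$-bimodule via the $B$-actions on $M$ and $N$.

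Filtering the bar resolution of $B$ as a $B \otimes_k B^\circ$-module by bar degree then gives a convergent spectral sequence
\[
E_2^{p, q} = \HH^p(H^\ast(B), \Ext^\ast_\Cscr(M, N))_q \Rightarrow \Ext^{p+q}_{D(B,\Cscr)}(M, N),
\]
with differentials $d_r$ of bidegree $(r, 1-r)$; the subscript $q$ records the internal degree of graded Hochschild cochains as in Section 4. I would then identify the corner $E_2^{0, 0} = \HH^0(H^\ast(B), \Ext^\ast_\Cscr(M, N))_0$ with $\Hom_{D(\Cscr)_{H^\ast(B)}}(FM, FN)$: this is the space of $H^\ast(B)$-bilinear degree-zero elements of $\Ext^\ast_\Cscr(M, N)$, which is precisely the condition that a morphism $M\to N$ in $D(\Cscr)$ commute with the two $H^\ast(B)$-actions. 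Tracing through the bar construction shows that the edge map
\[
\Hom_{D(B,\Cscr)}(M, N) \twoheadrightarrow E_\infty^{0, 0} \hookrightarrow E_2^{0, 0}
\]
agrees with the forgetful functor $F$.

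It therefore suffices to show $E_\infty^{0, 0} = E_2^{0, 0}$, i.e.\ that each differential $d_r$ leaving $E_r^{0, 0}$ vanishes for $r \ge 2$. Its target $E_r^{r, 1-r}$ is a subquotient of
\[
E_2^{r, 1-r} = \HH^r(H^\ast(B), \Ext^\ast_\Cscr(M, N))_{1-r},
\]
which is zero by hypothesis. This gives the desired surjectivity.

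The delicate point I anticipate is aligning the internal gradings of the spectral sequence precisely with the subscript $-n+1$ appearing in the statement; once this bookkeeping is set up the argument is formal. An alternative $A_\infty$-theoretic proof, closer in spirit to the presumed proof of Proposition \ref{ref-9.1.1-48}, would inductively build the higher Taylor components $f_n$ ($n \ge 2$) of an $A_\infty$-lift of the underlying chain map $f_1 = f$, with the obstruction at stage $n$ again lying in $\HH^n(H^\ast(B), \Ext^\ast_\Cscr(M, N))_{1-n}$.
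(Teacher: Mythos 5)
Your primary route and the paper's differ. The paper proves all three of Propositions \ref{ref-9.1.1-48}--\ref{ref-9.1.3-50} by the $A_\infty$-obstruction theory that you mention only in your final sentence: take $N$ fibrant, view a morphism $\varphi \in \Hom_{D(\Cscr)_{H^\ast(B)}}(FM,FN)$ as an honest chain map commuting with the $H^\ast(B)$-action up to homotopy, and then inductively build the Taylor coefficients $f_2, f_3, \ldots$ of an $A_\infty$-morphism in $C_\infty(B,\Cscr)$ with $f_1 = \varphi$; at stage $n$ one adjusts $f_n$ and solves for $f_{n+1}$, the obstruction living precisely in $\HH^n(H^\ast(B),\Ext^\ast_\Cscr(M,N))_{-n+1}$, which the hypothesis kills. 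That direct construction gives the result unconditionally and shares almost all its machinery with the proof of Proposition \ref{ref-9.1.1-48}, which is why it is the natural choice in context.

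Your spectral-sequence argument, which you lead with, is close to correct in outline but has two gaps you would need to fill. First, convergence: filtering the (graded) Hochschild complex $\prod_n \underline{\Hom}_k((sB)^{\otimes n}, \underline{\Hom}_\Cscr(M,N))$ by bar degree produces a complete decreasing filtration, so the resulting spectral sequence need only be conditionally convergent; without boundedness hypotheses on $B$ or on $\Ext^\ast_\Cscr(M,N)$ (which you do not have in this generality -- compare Corollary \ref{cor:cor1}, where explicit boundedness on $M$ and $N$ is assumed) you cannot immediately conclude that $E_\infty^{0,0}$ is a quotient of $\Ext^0_{D(B,\Cscr)}(M,N)$. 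Second, the claim that the edge map agrees with $F$ is asserted but not checked; it is true, but it requires tracing the comparison through the bar resolution, and it is precisely the content one must verify to conclude surjectivity. The $A_\infty$ proof circumvents both issues: it never invokes a limit argument, and the relationship between $f_1$ and $\varphi$ is built in from the start. Neither gap is fatal (the spectral-sequence route can be repaired, at least under mild boundedness hypotheses), but as written the argument is incomplete.
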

\begin{propositions}
\label{ref-9.1.3-50}
Let $M,N\in D(B,\Cscr)$ be such that for $n\ge 1$
\[
\HH^n(H^\ast(B),\Ext^\ast_\Cscr(M,N))_{-n}=0.
\]
Then the map
\[
\Hom_{D(B,\Cscr)}(M,N)\r\Hom_{D(\Cscr)_{H^\ast(B)}}(FM,FN)
\]
is injective.
\end{propositions}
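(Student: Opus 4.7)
The approach is to view $F$ as the edge map of a Hochschild-type spectral sequence and to read off injectivity from the vanishing hypothesis.

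By the minimal model construction for DG-algebras one may transfer the DG-structure on $B$ to an $A_\infty$-structure on $H^*(B)$ so that $D(B, \Cscr)$ becomes equivalent to the derived category of strictly unital $A_\infty$-modules over $H^*(B)$ in $\Cscr$ (cf.\ \cite{Keller21}). Under this equivalence a degree-zero morphism from $M$ to $N$ is represented by a closed $A_\infty$-morphism $f = \{f_n : M \otimes H^*(B)^{\otimes n} \to N\}_{n \geq 0}$, with $f_n$ of internal degree $-n$, taken modulo $A_\infty$-homotopy. The functor $F$ sends the class of $f$ to the class of its zeroth component $f_0$ in $\Hom_{D(\Cscr)_{H^*(B)}}(FM, FN)$.

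Assembling these $A_\infty$-morphisms into a Hom complex and filtering by bar degree yields a convergent spectral sequence
\[
E_2^{p, q} = \HH^p(H^*(B), \Ext^*_\Cscr(M, N))^q \Longrightarrow \Hom_{D(B, \Cscr)}(M, N[p + q]),
\]
which is the natural DG-enhancement of Corollary \ref{cor:cor1} (here the superscript $q$ denotes the total internal grading coming from $\Ext^*_\Cscr$ and from the grading on $H^*(B)$). Under this identification the functor $F$ is the edge map of the spectral sequence at total degree zero, and its kernel is the first nontrivial step of the induced filtration on $\Hom_{D(B, \Cscr)}(M, N)$. This kernel is filtered by the groups $E_\infty^{p, -p}$ for $p \geq 1$. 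The hypothesis reads exactly $E_2^{p, -p} = \HH^p(H^*(B), \Ext^*_\Cscr(M, N))_{-p} = 0$ for all $p \geq 1$, so $E_\infty^{p, -p} = 0$ and the kernel is trivial.

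The main work is to set up the spectral sequence rigorously in the $A_\infty$-setting, to verify convergence, and to match conventions so that the hypothesized Hochschild groups land exactly on the anti-diagonal $p + q = 0$ off of the $p = 0$ column. The bookkeeping is straightforward but requires care: a degree $-1$ $A_\infty$-homotopy has components $h_n$ of internal degree $-n - 1$, so the obstruction to extending a partial homotopy past bar degree $n$ sits in $\HH^{n+1}$ of internal degree $-(n+1)$, exactly matching the hypothesis. The shift relative to Propositions \ref{ref-9.1.1-48} and \ref{ref-9.1.2-49} (which use vanishing on the anti-diagonals at total internal degrees $+2$ and $+1$) reflects the passage from existence of $A_\infty$-module structures (operations of degree $2 - n$), to existence of morphisms (components of degree $1 - n$), to uniqueness of morphisms up to homotopy (homotopy components of degree $-n$).
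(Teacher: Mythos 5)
Your overall framework is the right one, and it lines up conceptually with the paper: filtering the $A_\infty$-morphism complex by bar degree produces a Hochschild-type spectral sequence, the functor $F$ reads off the bottom Taylor coefficient, and the obstruction to killing a morphism sent to zero by $F$ lives, stage by stage, in $\HH^n(H^\ast(B),\Ext^\ast_\Cscr(M,N))_{-n}$. Two small remarks on the setup. First, the paper does not pass to a minimal $A_\infty$-model on $H^\ast(B)$; it works directly with the $A_\infty$-module morphism complex over the DG-algebra $B$, which is marginally cleaner (the $E_1$-page already presents the Hochschild complex of $H^\ast(B)$ acting on $\Ext^\ast_\Cscr(M,N)$). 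Second, your obstruction index is shifted by one relative to the paper's, but only because you count $B$-tensor factors and the paper counts Taylor indices; the pattern $\HH^n$ in internal degree $-n$, for $n\ge 1$, is the one that matters and you have it right.

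The genuine gap is in the sentence where the kernel of the edge map is declared to be ``filtered by the groups $E_\infty^{p,-p}$ for $p\ge 1$,'' together with the implicit claim that this filtration is Hausdorff. The bar filtration is unbounded, so you get at best conditional convergence for free; the statement $\bigcap_p F^p\Hom_{D(B,\Cscr)}(M,N)=0$ is not formal and is exactly where a phantom morphism could hide. This is not ``bookkeeping'': it is the whole content of the proposition. The way to close it is precisely the paper's inductive argument: given $g$ with $Fg=0$, one constructs correcting homotopies $h^{(n)}$ with Taylor support in two consecutive bar degrees, so that $h=\sum_n h^{(n)}$ is a well-defined comodule map (only finitely many $h^{(n)}$ contribute at each bar degree, and the morphism complex is a product and hence complete for the bar filtration), and $g-(b_N\circ h+h\circ b_M)$ has vanishing Taylor coefficients. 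Said in your language, the completeness of the filtered complex plus $E_2^{p,-p}=0$ for $p\ge 1$ lets you build this infinite primitive term by term; the spectral sequence does not give you a shortcut around that construction, it only organizes it. You should either carry out this completeness argument explicitly (e.g.\ via Boardman's criterion $RE_\infty=0$) or, more economically, simply present the inductive construction as the paper does.
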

These results imply Proposition \ref{ref-B-1} in the introduction.
\subsection{Reminder on $A_\infty$-algebras and morphisms}
\subsubsection{$A_\infty$-algebras}
Let $A$ be a graded vector space. We denote by $\BB A=\bigoplus_{n\ge 1} (sA)^{\otimes n}$ 
the tensor coalgebra   (without counit) of $s A$. 
Sometimes we write $sa_1\otimes\cdots \otimes sa_n\in \BB A$
as a tuple $(sa_1,\ldots,sa_n)$. With this convention, the comultiplication is
given by
\[
\Delta(sa_1,\ldots,s a_n)=\sum_{i=1}^{n-1} (sa_1,\ldots,sa_i)\otimes (sa_{i+1},\ldots,sa_n).
\]
By definition, an $A_\infty$-structure on $A$ is given by a (graded) coderivation
$b:\BB A\r \BB A$ of degree~$1$ and square zero. Thus
\begin{align*}
\Delta\circ b&=(b\otimes \Id+\Id\otimes b)\circ \Delta\\
b^2&=0
\end{align*}
The coderivation $b$ is determined by its Taylor coefficients  $(b_n)_{n\ge 1}$, 
which are the compositions
\[
(s A)^{\otimes n}\xrightarrow{\text{inclusion}} \BB A\xrightarrow{b} \BB A\xrightarrow{\text{projection}} s A.
\]
The fact that $b$ is a coderivation implies 
\[
b=\sum_{p,q,r\geq 0} \Id^{\otimes p} \otimes b_q\otimes \Id^{\otimes r}.
\]
Corresponding to the $b_n$ we have
the more traditional operations 
\[
m_n:A^{\otimes n}\r A
\]
of degree $2-n$,
which are related to the $b_n$ by the formula
\[
b_n=s^{-n+1}m_n.
\]
Explicitly, in the cases $n=1$ and $n=2$
\begin{equation}
\label{ref-9.1-51}
\begin{aligned}
b_1(sa)&=-sm_1(a)\\
b_2(sa,sb)&=(-1)^{|a|}sm_2(a,b).
\end{aligned}
\end{equation}
A DG-algebra is the same as an $A_\infty$-algebra with $b_n=0$ for $n\ge 3$.
\subsubsection{$A_\infty$-morphisms}
If $A$, $C$ are $A_\infty$-algebras, an $A_\infty$-morphism $\psi:A\r C$ is by definition
a graded coalgebra morphism $\psi:\BB A\r \BB C$ commuting with the differentials. Thus
\begin{align*}
\Delta\circ \psi&=(\psi\otimes \psi)\circ \Delta\\
b_C\circ \psi&=\psi \circ b_A
\end{align*}
 Again $\psi$ is
determined by its Taylor coefficients $(\psi_n)_{n\ge 1}$, which are the compositions
\[
(s A)^{\otimes n}\xrightarrow{\text{inclusion}} \BB A\xrightarrow{\psi} \BB C\xrightarrow{\text{projection}} s C.
\]
This time we have
\[
\psi=\sum_{r,n_1,\ldots,n_r} \psi_{n_1}\otimes \psi_{n_2} \otimes \cdots \otimes \psi_{n_r}.
\]
There are no sign issues since all $\psi_n$ have degree zero. For this reason we will identify $\psi_1:sA\r sC$
with a map $\psi_1:A\r C$ (thus $\psi_1(sa)=s\psi_1(a)$).
\subsubsection{$A_\infty$-modules}
We will define $A_\infty$-modules 
over a $k$-linear Grothendieck category $\Cscr$, which we fix throughout.
If $A$ is an $A_\infty$-algebra, an $A_\infty$-$A$-module  in $C(\Cscr)$ is an object $M\in C(\Cscr)$ together
with an $A_\infty$-morphism $A\r \underline{\Hom}_\Cscr(M,M)$. Alternatively,
define
\[
\BB M=(\BB A)^+\otimes_k M,
\]
where $(-)^+$ means adjoining a counit: $(\BB A)^+=\oplus_{n\geq 0} (sA)^{\otimes n}$ and $\Delta(sa_1,\ldots,sa_n)=\sum_{i=0}^n
 (sa_1,\ldots,sa_i)\otimes (sa_{i+1},\ldots,sa_n)$ where empty parentheses are to be interpreted as 1. Then $\BB M$ is a left $\BB A$-comodule via
\[
\Delta_M (sa_1,\ldots,sa_n,m)=\sum_{i=1}^n (sa_1,\ldots,sa_i)\otimes (sa_{i+1},\ldots,sa_n,m)
\]
and an $A_\infty$-structure on $M$ is given by a $\BB A$ coderivation $b_M$
on $\BB M$ of degree one and square zero.  Thus $b_M$ satisfies 
\begin{align*}
\Delta_M\circ b_M&=(b_A\otimes \Id+\Id\otimes b_M)\circ \Delta_M\\
b_M^2&=0
\end{align*}
Needless to say, $b_M$ is again determined by its Taylor coefficients,
which are morphisms
\[
b_{M,n}:(sA)^{\otimes n-1}\otimes_k M\r M
\]
in $C(\Cscr)$.
 We have
\[
b_M=\sum_{p,q,r\geq 0}\Id_A^{\otimes p}\otimes b_{A,q}\otimes \Id_A^{\otimes r}\otimes \Id_M
+\sum_{m,n\geq 0} \Id_A^{\otimes m} \otimes b_{M,n}.
\]
We denote the category of $A_\infty$-$A$-modules in $C(\Cscr)$, with morphisms given by morphisms of complexes, by
$C_{\infty}^{\mathrm{strict}}(A,\Cscr)$. It is easy to see that this is a
Grothendieck category.
\subsubsection{$A_\infty$-morphisms between $A_\infty$-modules}
Let $A$ be an $A_\infty$-algebra and let $M,N\in C(\Cscr)$ be two
$A_\infty$-$A$-modules in $\Cscr$. An $A_\infty$-morphism $\psi:M\r
N$ is a comodule morphism of degree zero $\psi:\BB M\r \BB N$ satisfying $b_N\circ
\psi=\psi\circ b_M$. Thus $\psi$ satisfies
\[
\Delta \circ \psi =(\Id\otimes \psi)\circ \Delta.
\]

This time, the Taylor coefficients are
\[
\psi_n:(sA)^{\otimes n-1}\otimes_k M\r N
\]
and $\psi$ is given by 
\[
\psi=\sum_{p,q\geq 0}\Id^{\otimes p}\otimes \psi_q.
\]
We denote the category of $A_\infty$-$A$-modules in $C(\Cscr)$ equipped with $A_\infty$-morphisms by
$C_{\infty}(A,\Cscr)$. 

A homotopy between $A_\infty$-morphisms $\psi_1,\psi_2:M\r N$ is a comodule morphism
$h:\BB M\r \BB N$ of degree $-1$ such that $\psi_1-\psi_2=b_N\circ h+h\circ b_M$. 
\subsubsection{Units} \def\hu{\operatorname{hu}} 
 Let $A$ be an $A_\infty$-algebra. We say that $A$ has a
homological unit if $H^\ast(A)$ has a unit element $1_A$.  Let $M\in
C_\infty(A,\Cscr)$. We say that $M$ is homologically unital if $1_A$
acts as the identity on $H^\ast(M)$. All constructions for
$A_\infty$-algebras outlined above have a unital analogue in which we require that on the level of cohomology the units behave as expected.
We write $C_\infty^{\hu,\mathrm{strict}}(A, \Cscr)$, $C_{\infty}^{\hu}(A,\Cscr)$ for the corresponding
categories.
Furthermore we put
\begin{align*}
D_\infty^{\text{strict}}(A,\Cscr)&=C_\infty^{\hu,\mathrm{strict}}(A,\Cscr)[\text{Qis}^{-1}]\\
D_\infty(A,\Cscr)&=C_\infty^{\hu}(A,\Cscr)[\text{Qis}^{-1}]
\end{align*}
It follows in the usual way that homotopic maps $\psi_1,\psi_2:M\r N$ in $C_\infty(A,\Cscr)$ yield equal
maps in $D_\infty^{\text{strict}}(A,\Cscr)$ and $D_\infty(A,\Cscr)$.
\begin{lemmas} \label{ref-9.2.1-52} Let $A$ be a DG-algebra. Then the natural functors
\begin{align}
D(A,\Cscr)&\r D_\infty^{\mathrm{strict}}(A,\Cscr)\\
\label{ref-9.3-53}
D(A,\Cscr)&\r D_\infty(A,\Cscr)
\end{align}
are equivalences of categories.
\end{lemmas}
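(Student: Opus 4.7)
My plan is to build, for every $A_\infty$-$A$-module $M$ in $C(\Cscr)$, a functorial bar-type resolution by a strict DG-$A$-module. Concretely, set
\[
\tilde M \;:=\; A \otimes_k (\BB A)^+ \otimes_k M,
\]
with the left $A$-action by multiplication on the leftmost tensor factor, and the differential being the unique $A$-linear extension of the coderivation built out of $b_A$, the multiplication $m_2$ on $A$ (which is available since $A$ is a genuine DG-algebra, hence $b_n=0$ for $n\geq 3$), and the comodule coaction $b_M$ on $(\BB A)^+ \otimes_k M$. First I would check that $\tilde M$ really is a strict DG-$A$-module and that the obvious ``evaluation'' map $\pi_M : \tilde M \to M$, whose first Taylor coefficient is $a \otimes 1 \otimes m \mapsto am$ and whose higher Taylor coefficients absorb the inner $b_M$ terms, is a well-defined strict $A_\infty$-morphism. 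Filtering by bar length and using the homological unit of $A$ on the associated graded gives a contracting homotopy, proving $\pi_M$ is a quasi-isomorphism.

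With this resolution in hand, essential surjectivity for both functors in \eqref{ref-9.3-53} is immediate: any $A_\infty$-$A$-module $M$ is identified, already in $D_\infty^{\mathrm{strict}}(A,\Cscr)$, with the strict DG-module $\tilde M \in D(A,\Cscr)$. For full faithfulness of $D(A,\Cscr) \to D_\infty^{\mathrm{strict}}(A,\Cscr)$, one observes that $C(A,\Cscr)$ embeds as a full subcategory of $C_\infty^{\hu,\mathrm{strict}}(A,\Cscr)$ with the same notion of quasi-isomorphism, and the resolution $M \mapsto \tilde M$ allows one to compute Hom in the localized category via roofs whose apex lies in $C(A,\Cscr)$, whence the two localizations agree. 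For full faithfulness of $D_\infty^{\mathrm{strict}}(A,\Cscr) \to D_\infty(A,\Cscr)$ the essential input is that every $A_\infty$-morphism $\psi : \tilde M \to N$ out of a bar resolution is $A_\infty$-homotopic to a strict morphism: the higher Taylor coefficients $\psi_n$ ($n\geq 2$) can be eliminated one at a time by an inductive obstruction argument, using that $\tilde M$ is free over $A$ in the bar direction so the relevant obstruction complexes are contractible.

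The main obstacle, in my view, is two-fold. First, verifying that $\pi_M$ is a quasi-isomorphism in the generality of an arbitrary Grothendieck category $\Cscr$ (rather than merely for chain complexes of $k$-vector spaces) requires that the filtration-and-contracting-homotopy argument be phrased functorially; fortunately the bar construction only tensors with $k$-vector spaces, so all constructions are manifestly functorial in $\Cscr$ and the contracting homotopy is built from the unit of $H^\ast(A)$. Second, the inductive strictification of $A_\infty$-morphisms needs the obstruction cocycles to be coboundaries; this comes down to verifying that, when the source is a bar resolution, the hom-complexes $\underline{\Hom}((sA)^{\otimes n-1}\otimes_k \tilde M, N)$ have vanishing cohomology in the relevant degrees, which is a standard calculation mirroring the classical freeness argument for bar resolutions.
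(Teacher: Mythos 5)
Your proposal is essentially the same as the paper's argument: the resolution $\tilde M = A \otimes_k (\BB A)^+ \otimes_k M$ is precisely Lef\`evre-Hasegawa's $A \otimes^{\infty}_A -$, which the paper cites as the quasi-inverse to both functors, and the filtration-by-bar-length acyclicity you invoke for the key quasi-isomorphism is the content of the paper's Lemma~\ref{ref-9.2.2-54}. One small correction to your ``main obstacle'' diagnosis: the genuine adaptation needed from Lef\`evre's proof is not functoriality in $\Cscr$ (as you yourself note, the bar construction is visibly functorial), but the fact that $A$ and $M$ are only \emph{homologically} unital, which is exactly why one cannot write a contracting homotopy at the chain level and must instead pass to the spectral sequence of the bar filtration, where on the first page one sees $\BB H^\ast(M)$ over the truly unital $H^\ast(A)$.
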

\begin{proof} The proof in the strict and non-strict cases is the same, so we consider only
\eqref{ref-9.3-53}.
If $\Cscr$ is the category of $k$-vector spaces
and we restrict ourselves to so-called  ``strictly unital''\footnote{An $A_\infty$-module is strictly unital if $b_{M,n}$, with $n\geq 2$,
vanishes as soon as one of the arguments is 1 and $b_{2,M}(\id_M\otimes 1_A)=\id_M$ where $\eta$ is the unit of $A$.} modules, this is 
\cite[Lemme 4.1.3.8]{Lefevre}. The proof in loc.\ cit.\ goes more or
less through in our setting.  The first step is the definition of
a functor
\[
A\otimes^{\infty}_A -: C_\infty(A,\Cscr)\r C(A,\Cscr).
\]
This definition is given in \cite[Lemme 4.1.1.6]{Lefevre}.  The next step is to prove that
$A\otimes^{\infty}_A -$ yields  a quasi-inverse to \eqref{ref-9.3-53} after inverting quasi-isomorphisms. This is part
of the proof of \cite[Lemme 4.1.1.6]{Lefevre}. Ultimately it reduces to (the well-known) Lemma
\ref{ref-9.2.2-54} below.
\end{proof}
\begin{lemmas} 
\label{ref-9.2.2-54}
Assume that $A$ is a homologically unital $A_\infty$-algebra
and $M\in C_\infty^{\hu}(A,\Cscr)$. Then $\BB M$ is acyclic.
\end{lemmas}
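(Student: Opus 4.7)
The plan is to prove that $\BB M$ is acyclic by running a spectral sequence associated with the filtration by tensor length, whose $E_1$-page will turn out to be the classical reduced bar complex of the strictly unital graded algebra $H^\ast(A)$ acting on $H^\ast(M)$, and hence acyclic by the usual contracting homotopy built from the unit.

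Concretely, I would introduce the increasing filtration $F_p\BB M = \bigoplus_{n\le p}(sA)^{\otimes n}\otimes_k M$. Inspection of the defining formula for $b_M$ shows that every Taylor coefficient $b_{A,q}$ changes tensor degree by $1-q\le 0$ and every $b_{M,n}$ by $1-n\le 0$, so $b_M$ preserves $F_p$. Since $F_{-1}=0$ and $\bigcup_p F_p=\BB M$, the filtration is bounded below and exhaustive, and the resulting spectral sequence converges to $H^\ast(\BB M)$. The associated graded $\mathrm{gr}_p\BB M=(sA)^{\otimes p}\otimes_k M$ inherits only the tensor-degree-preserving part of $b_M$, namely the internal differential built from $b_{A,1}$ on each tensor factor together with $b_{M,1}$ on $M$. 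Because we are tensoring over the field $k$, the Künneth formula gives
\[
E_1^{p}\;\cong\;(sH^\ast(A))^{\otimes p}\otimes_k H^\ast(M),
\]
and the differential $d_1$ is induced by the part of $b_M$ strictly decreasing tensor degree by one, namely the $b_{A,2}$ terms (which descend to multiplication in $H^\ast(A)$) and the $b_{M,2}$ term (which descends to the action of $H^\ast(A)$ on $H^\ast(M)$). Thus $(E_1,d_1)$ is exactly the reduced bar complex of the strictly unital graded algebra $H^\ast(A)$ acting on the strictly unital module $H^\ast(M)$.

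To conclude, I would exhibit the standard extra-degeneracy contracting homotopy
\[
h\bigl(sb_1\otimes\cdots\otimes sb_p\otimes n\bigr)\;=\;s 1_A\otimes sb_1\otimes\cdots\otimes sb_p\otimes n,
\]
for which $d_1 h+h d_1=\mathrm{id}$ follows, modulo signs, from $1_A\cdot b_1=b_1$ in $H^\ast(A)$ and $1_A\cdot n=n$ in $H^\ast(M)$ (the latter being the homological unitality hypothesis on $M$): the leading contribution of $d_1h$ rewrites $s(1_A\cdot b_1)\otimes\cdots$ as $sb_1\otimes\cdots$, while every other term produced by $d_1h$ cancels against the corresponding term of $hd_1$. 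The case $p=0$ is handled separately using $d_1(s1_A\otimes n)=\pm n$. Hence $E_2=0$, and by convergence $H^\ast(\BB M)=0$. The step I expect to require the most care is the rigorous verification of strong convergence in the Grothendieck-category setting, but this should be automatic: both the filtration on $\BB M$ and the induced filtration on cohomology are bounded below and exhaustive, so the vanishing of the associated graded of $H^\ast(\BB M)$ forces the vanishing of $H^\ast(\BB M)$ itself.
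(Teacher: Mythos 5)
Your proof is correct and follows essentially the same route as the paper: filter $\BB M$ by tensor length, identify the $E_1$-page of the resulting spectral sequence with the bar complex of the strictly unital graded algebra $H^\ast(A)$ acting on the strictly unital module $H^\ast(M)$, and kill it with the standard unit-insertion contracting homotopy. The one slip is terminological: the complex $\bigoplus_{p\ge 0}(sH^\ast(A))^{\otimes p}\otimes_k H^\ast(M)$ you arrive at is the \emph{unreduced} (non-normalized) bar complex — the unit $1_A\in H^\ast(A)$ is not quotiented out — and it is precisely for the unreduced version that the extra-degeneracy homotopy $h=s1_A\otimes(-)$ applies; this does not affect the validity of the argument.
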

\begin{proof} 
The fact that $\BB M$ is acyclic is proved in \cite[Lemme 4.1.1.6]{Lefevre}, under
the hypothesis that $A$ and $M$ are ``strictly unital'', by providing an explicit
contracting homotopy. If we only assume that $A,M$ are homologically
unital then we cannot use this argument.

So we proceed differently. We have to show that 
$
H^\ast(\BB M)=0
$. 
We define an ascending filtration on $\BB M$
\[
F_n \BB M=\bigoplus_{m\le n} (sA)^{\otimes m} \otimes M
\]
and we consider the resulting spectral sequence.
One 
checks that the first page of this spectral sequence is
\[
\BB H^\ast(M)
\]
where we consider $H^\ast(M)$ as an $A_\infty$-module over $H^\ast(A)$ with $b_i=0$ for $i\ne 2$.
Since $H^\ast(A)$ has a true unit and $H^\ast(M)$ is truly unital, it is well-known that
$\BB H^\ast(M)$ is acyclic (for example by using
contracting homotopy given in the proof of  \cite[Lemme 4.1.1.6]{Lefevre}  alluded to above).
\end{proof}
\subsection{Proof of Proposition \ref{ref-9.1.1-48}} 
\label{ref-9.3-55}
\begin{lemmas}
\label{ref-9.3.1-56}
Let $A,C$ be two $A_\infty$-algebras over $k$ and let $\phi:A\r C$ be a $k$-linear map commuting with the differentials $m_1$ such that $H^{\ast}(\phi):H^\ast(A)\r H^\ast(C)$ is a graded algebra morphism. 
 Assume that for
all $n\ge 3$ we have
\[
\HH^{n}(H^\ast(A),H^\ast(C))_{-n+2}=0.
\]
Then there exists an $A_\infty$-morphism $\psi:A\r C$ such that $\psi_1=\phi$.
\end{lemmas}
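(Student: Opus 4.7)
The plan is to build the $A_\infty$-morphism $\psi: A \to C$ Taylor coefficient by Taylor coefficient, with $\psi_1 = \phi$ as the base case, via a standard obstruction-theoretic argument. Unraveling the defining relation $b_C \circ \psi = \psi \circ b_A$ on the bar coalgebra and collecting terms by bar-degree, the $n$-th equation takes the schematic form
\[
m_1^C \circ \psi_n \pm \sum_{i+j=n-1}\psi_n \circ (1^{\otimes i}\otimes m_1^A\otimes 1^{\otimes j}) \;=\; O_n(\psi_1,\dots,\psi_{n-1};\,m_\bullet^A,\,m_\bullet^C),
\]
where $O_n$ is an explicit polynomial expression in the previously-constructed coefficients and the higher multiplications of $A$ and $C$. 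The $n=1$ case holds by the hypothesis that $\phi$ commutes with the differentials $m_1^A, m_1^C$.

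For the induction step, assume $\psi_1,\dots,\psi_{n-1}$ have been chosen to satisfy the first $n-1$ such equations. One computes directly, using the $A_\infty$-axioms for $A$ and $C$ together with the already-established lower-order relations, that $O_n$, viewed as a $k$-linear map $A^{\otimes n}\to C$ of degree $2-n$, commutes up to sign with the total differential induced by $m_1^A,m_1^C$. It therefore descends to a Hochschild $n$-cochain $\bar O_n \in C^n(H^*(A), H^*(C))_{-n+2}$, where the $H^*(A)$-bimodule structure on $H^*(C)$ is induced by $H^*(\phi)$ (which is an algebra map by hypothesis, so this makes sense). A further computation of the same flavor shows that $\bar O_n$ is a Hochschild cocycle. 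For $n\ge 3$, the assumption $\HH^n(H^*(A), H^*(C))_{-n+2}=0$ yields a Hochschild cochain $\bar\psi_n$ with $d_{\text{Hoch}}\bar\psi_n = \bar O_n$, which lifts (after adjustment by a $(m_1^A,m_1^C)$-coboundary) to a map $\psi_n:A^{\otimes n}\to C$ solving the $n$-th equation. The case $n=2$ is handled separately but by the same mechanism: the class of $\bar O_2$ in $\HH^2(H^*(A), H^*(C))_0$ is precisely the obstruction to $H^*(\phi)$ being multiplicative, and this vanishes by hypothesis.

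The main technical obstacle is the verification that $\bar O_n$ is a Hochschild cocycle: this is a purely formal but notationally heavy computation, combining the quadratic $A_\infty$-relations among the $m_i^A$ and $m_j^C$ with the induction hypothesis on the $\psi_{<n}$. Once that identity is established, the remainder of the argument is routine: the vanishing hypothesis produces a cohomology-level trivialization $\bar\psi_n$, which we lift to the chain level and, if necessary, correct by a coboundary so that the lift solves the $n$-th equation on the nose rather than merely up to boundary. Iterating gives all the $\psi_n$, and hence the desired $A_\infty$-morphism $\psi$ with $\psi_1=\phi$.
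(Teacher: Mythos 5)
Your overall architecture is right — you identify the correct obstruction theory, the correct equation $[b_1,\psi_n]=O_n$, the correct Hochschild cohomology groups, and the two facts that need verifying ($[b_1,O_n]=0$ and $d_{\mathrm{Hoch}}\bar O_n=0$). But there is a genuine gap at the pivotal step, and it is precisely the point the paper flags with the warning that ``the construction of $\psi_{n+1}$ will involve changing $\psi_n$.''

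The equation you need to solve is $[b_1,\psi_n]=O_n$, so what is required is that the class $\bar O_n$ vanish in $H^1\bigl(\underline{\Hom}_k((sA)^{\otimes n},sC)\bigr)$, i.e.\ that $O_n$ be $[b_1,-]$-exact. The vanishing of $\HH^n(H^*(A),H^*(C))_{-n+2}$ gives something strictly weaker: it makes $\bar O_n$ a Hochschild \emph{coboundary}, say $\bar O_n=d_{\mathrm{Hoch}}(\bar\eta)$ with $\bar\eta\in C^{n-1}(H^*(A),H^*(C))_{-n+2}$. A Hochschild coboundary is not the zero element, so this does not make $O_n$ $[b_1,-]$-exact, and your claim that the primitive ``lifts (after adjustment by an $(m_1^A,m_1^C)$-coboundary) to a map $\psi_n:A^{\otimes n}\to C$ solving the $n$-th equation'' also has an arity mismatch: $\bar\eta$ has $n-1$ arguments and cannot be a lift candidate for $\psi_n$, which has $n$.

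The missing idea is a \emph{retroactive modification of the previously constructed coefficient}. An element $\delta$ of $\underline{\Hom}_k((sA)^{\otimes n-1},sC)$ with $[b_1,\delta]=0$ may be added to $\psi_{n-1}$ without spoiling the lower equations, and this changes the obstruction class by $\bar O_n \mapsto \bar O_n \pm d_{\mathrm{Hoch}}(\bar\delta)$. Since $\bar O_n$ is a Hochschild coboundary, one can choose $\bar\delta = \mp\bar\eta$, lift it to a $[b_1,-]$-cocycle $\delta$, replace $\psi_{n-1}$ by $\psi_{n-1}+\delta$, and thereby force the new $\bar O_n$ to vanish \emph{as an element} of $H^1$. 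Only then does $[b_1,\psi_n]=O_n$ admit a solution. You should also note that the $n=2$ case is genuinely different: there the obstruction cochain $\bar O_2$ vanishes on the nose because $H^*(\phi)$ is multiplicative on cohomology, so no Hochschild vanishing is invoked (indeed the hypothesis says nothing about $\HH^2$).
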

\begin{proof}
This is close to the obstruction theory in \cite[Appendix B.4]{Lefevre} for minimal $A_\infty$-algebras. Rather
than reducing to it by invoking the fact that any $A_\infty$-algebra is $A_\infty$-homotopy equivalent to a minimal one, we give
a simple direct proof for the benefit of the reader.

We will construct $\psi$ step by step. We first put $\psi_1=\phi$. $\phi$ is compatible with the
multiplications on $A$ and $C$ up to a homotopy, which we take to be $-\psi_2$. Thus
\begin{equation}
\label{ref-9.4-57}
b_{C,2}\circ (\psi_1\otimes \psi_1) -\psi_1 \circ b_{A,2}=-b_{C,1}\circ \psi_2+\psi_2\circ (b_{A,1}\otimes \Id+\Id\otimes b_{A,1}).
\end{equation}
Assume that we have constructed $\psi_1,\ldots,\psi_n$. Let $\psi_{\le n}:\BB A\r \BB C$
be the coalgebra map such that 
\[
(\psi_{\le n})_i=
\begin{cases} 
\psi_i & i=1,\ldots,n\\
0&\text{otherwise}
\end{cases}
\]
 Assume furthermore that
\begin{equation}
\label{ref-9.5-58}
b_C\circ \psi_{\le n}=\psi_{\le n} \circ b_A\qquad \text{restricted to $(sA)^{\otimes i}$ for $1\le i\le n$} .
\end{equation}
It follows from \eqref{ref-9.4-57} that we have already achieved this for $n\le 2$.

Our aim is to construct $\psi_{n+1}$ such that $b_C\circ \psi_{\le n+1}=\psi_{\le n+1} \circ b_A$ 
when restricted to $(sA)^{\otimes i}$ for $1\le i\le n+1$. Before we start, we warn the reader that the construction
of $\psi_{n+1}$ will involve changing $\psi_n$.

 Consider
\begin{equation}
\label{ref-9.6-59}
D=b_C\circ \psi_{\le n}-\psi_{\le n} \circ b_A.
\end{equation}
Then $D:\mathbb{B}A\r \mathbb{B}C$ is a $\psi_{\le n}$ coderivation 
of degree $1$.
By construction we have $D_m=0$ for $m=1,\ldots,n$. Moreover, it is clear that we have
\begin{equation}
\label{ref-9.7-60}
b_C\circ D+D\circ b_A=0.
\end{equation}
Evaluating \eqref{ref-9.7-60} on $(sA)^{\otimes n+1}$,  we find 
\[
b_{C,1}\circ D_{n+1}+D_{n+1}\circ (\sum_{p+r=n} \Id^{\otimes p}\otimes b_{A,1}\otimes \Id^{\otimes r})=0,
\]
or succinctly
\begin{equation}
[b_1,D_{n+1}]=0.
\end{equation}
Here $[b_1,-]$ is our notation for the differential on $\Hom_k((sA)^{\otimes n+1},sC)$ induced by $b_{A,1}$ and $b_{C,1}$.

\medskip

We now evaluate \eqref{ref-9.7-60} on $(sA)^{\otimes n+2}$. We get
\begin{multline*}
b_{C,1}\circ D_{n+2}+ b_{C,2}\circ( D_{n+1}\otimes \psi_1+\psi_1\otimes D_{n+1})
\\
+D_{n+2}\circ (\sum_{p+r=n+1} \Id^{\otimes p}\otimes b_{A,1}\otimes \Id^{\otimes r})+D_{n+1}\circ (\sum_{p+r=n} \Id^{\otimes p}\otimes b_{A,2}\otimes \Id^{\otimes r})=0
\end{multline*}
Written more nicely:
\[
[b_1,D_{n+2}]+b_{C,2}\circ( D_{n+1}\otimes \psi_1)+\sum_{p+r=n} D_{n+1}\circ(\Id^{\otimes p}\otimes b_{A,2}\otimes \Id^{\otimes r})
+ b_{C,2}\circ (\psi_1\otimes D_{n+1})=0.
\]
This may be rewritten as 
\begin{equation}
\label{ref-9.9-62}
0=d_{\text{Hoch}}(\bar{D}_{n+1})
\end{equation}
where $\bar{D}_{n+1}$ is the image of $D_{n+1}$ in $H^1(\Hom_k((sA)^{\otimes n+1},sC))=\Hom_k(H^\ast(A)^{\otimes n+1},H^\ast(C))_{-n+1}$,
and where $d_{\text{Hoch}}$ represents the Hochschild differential. Thus $\bar{D}_{n+1}$ represents an element
of $\HH^{n+1}(H^\ast(A),H^\ast(C))_{-n+1}$.
\medskip

At this point we use the idea that we may modify $\psi_n$ as long as condition \eqref{ref-9.5-58}
remains valid. Let $\psi'_{\le n}$ be like $\psi_{\le n}$ except that $\psi_{n}$ is replaced by $\psi'_n=\psi_n+\delta_n$, with $\delta_n:(sA)^{\otimes n}\to sC$. Then condition \eqref{ref-9.5-58} remains valid for $\psi'_{\le n}$ provided $[b_1,\delta_n]=0$. We
will assume this.
Now let $D'$ be like $D$ but computed from $\psi'_{\le n}$. Then we find by \eqref{ref-9.6-59}
\[
D'_{n+1}=D_{n+1}+b_{C,2}\circ (\delta_n\otimes \psi_1+\psi_1\otimes \delta_n)-\delta_n\circ (\sum_{p+r=n-1} \Id^{\otimes p}\otimes b_{A,2}\otimes \Id^{\otimes r}).
\]
In other words
\[
\bar{D'}_{n+1}=\bar{D}_{n+1}\pm d_{\text{Hoch}}(\bar{\delta}_n).
\]
Combining this with  \eqref{ref-9.9-62}, together with the assumption $\HH^{n+1}(H^\ast(A),H^\ast(C))_{-n+1}=0$ (as $n\ge 2$), it
follows that we may modify $\psi_{n}$ in such a way that $\bar{D}_{n+1}=0$.

\medskip

Let $\psi_{n+1}$ be arbitrary. The condition $b_C\circ \psi_{\le n+1}=\psi_{\le n+1} \circ b_A$ when restricted
to $(sA)^{n+1}$ may be succinctly written as
\begin{equation}
\label{ref-9.10-63}
[b_1,\psi_{n+1}]=-(b_C \circ \psi_{\le n}-\psi_{\le n} \circ b_A)\mid (sA)^{\otimes n+1},
\end{equation}
which may again be rewritten as
\[
[b_1,\psi_{n+1}]=-D_{n+1}.
\]
Since $\bar{D}_{n+1}=0$, this equation has a solution.
\end{proof}
\begin{proof}[Proof of Proposition \ref{ref-9.1.1-48}]
We may assume that $M$ is a fibrant object in $C(\Cscr)$ for the standard model structure \cite{Beke}. Put $A=\underline{\Hom}_\Cscr(M,M)$.
The $H^\ast(B)$-action on $M$ is represented by a graded map $H^\ast(B)\r H^\ast(A)$. We
may lift this map to a graded linear map $\phi:B\r A$, commuting with the differentials on $B$ and $A$.

Since $H^\ast(A)=\Ext^\ast_\Cscr(M,M)$, the hypotheses together with Lemma \ref{ref-9.3.1-56}
imply that $\phi$ may be lifted to an $A_\infty$-morphism $\psi:B\r A$ such that $\psi_1=\phi$.
Then $M$ becomes a homologically unital $A_\infty$-$B$-module, that is, an object in $D_\infty(B,\Cscr)$.
The proposition now follows by invoking Lemma \ref{ref-9.2.1-52}, together with the commutative
diagram
\[
\xymatrix{
D(B,\Cscr)\ar[r]\ar[d]& D_\infty(B,\Cscr)\ar[d]\\
D(\Cscr)_{H^\ast(B)}\ar@{=}[r] & D(\Cscr)_{H^\ast(B)}
}
\]
\end{proof}
\subsection{Proof of Proposition \ref{ref-9.1.2-49}} 
Let $M,N\in D(B, \mathscr{C})$ be as in the statement of Proposition \ref{ref-9.1.2-49} and
assume that $N$ is fibrant for the model structure on $C(B,\Cscr)$ \cite[Prop.\ 5.1]{lowenvdb2}.
Then it is easy to see that $N$ is also fibrant when considered as an element of $C(\Cscr)$.
In particular,  an element $\varphi \in \text{Hom}_{D(\mathscr{C})_{H^*(B)}}(FM,FN)$ may be considered as an actual map $\varphi:M\r N$ in $C(\Cscr)$ commuting 
with the $H^\ast(B)$-action, up to homotopy.
We  will construct a morphism   $f:M\to N$ in $C_\infty(B,\Cscr)$ such that $f_1=\varphi$.   This is sufficient by Lemma \ref{ref-9.2.1-52}. 

Consider 
\[
b_{N,2}\circ (\id_B\otimes f_1)-f_1\otimes b_{M,2}.
\]
This is a map $B\otimes_k M\r N$, which we may consider as a map $B\r \underline{\Hom}_{\Cscr}(M,N)$. The latter
is zero on cohomology.  Hence on the level of complexes of $k$-vector spaces it is zero up to homotopy. Call this homotopy $-f_2:B\r \underline{\Hom}_{\Cscr}(M,N)$ and view it
as a map $B\otimes_k M\r N$ in $C(\Cscr)$. Thus we have
\begin{equation}
\label{ref-9.11-64}
b_{N,2}\circ (\id_B\otimes f_1)-f_1\otimes b_{M,2}=-b_{N,1} \circ f_2 + f_2\circ (\Id_B\otimes b_{M,1}+b_{B,1}\otimes \Id_M).
\end{equation}
Assume that we have constructed $f_1,\ldots,f_n$. Define $f_{\leq n}$ as the comodule map $\BB M\r \BB N$ given by the Taylor coefficients $(f_1,\ldots, f_n,0,\ldots)$.
Assume furthermore that 
\begin{equation}
\label{ref-9.12-65}
b_N\circ f_{\le n}=f_{\le n} \circ b_M\qquad \text{restricted to $(sB)^{\otimes i}\otimes M$ for $0\le i\le n-1$} .
\end{equation}
It follows from \eqref{ref-9.11-64} that we have already achieved this for $n\le 2$.
Our aim is now to construct $f_{n+1}$ such that $b_N\circ f_{\le n+1}=f_{\le n+1} \circ b_M$ 
when restricted to $(sB)^{\otimes i}\otimes M$ for $0\le i\le n$. As in the proof of Proposition \ref{ref-9.1.1-48}, this will involve
retroactively changing $f_n$.

Define $D=b_N\circ f_{\leq n}-f_{\leq n}\circ b_M$. Then we have $D_m=0$ for $m=0, \ldots, n$. We will now show that $[b_1,D_{n+1}]=0$. To do this, notice that  
\[b_N\circ D+ D\circ b_M=0\,.\]
Evaluate this equation on $(sB)^{\otimes n}\otimes M$ and get
\begin{equation}\label{ref-9.13-66}
b_{N,1}\circ D_{n+1}+ D_{n+1}\circ \left( \sum_{p+r=n-1} \id^{\otimes p}\otimes b_{B,1}\otimes \id^{\otimes r}\otimes \id_M\right) +D_{n+1}\circ (\id_B^{\otimes n}\otimes b_{M,1})=0
\end{equation}
which is precisely the statement that $b_{N,1}\circ D_{n+1}+ D\circ b_{M,1} =0$. 

We now want to take the adjoint map $(sB)^{\otimes n}\to \underline{\Hom}_{\Cscr}(M,N)$. To do this, first define 
\begin{align*}
\textbf{b}_{N,1}: &  \underline{\Hom}_\Cscr(M,N)\to\underline{\Hom}_\Cscr(M,N),\; \textbf{b}_{N,1}(f)=b_{N,1} \circ f\\
\textbf{b}_{M,1}: & \underline{\Hom}_\Cscr(M,N)\to\underline{\Hom}_\Cscr(M,N),\; \textbf{b}_{M,1}(f)=(-1)^{|f|} f\circ b_{M,1} \\
\textbf{D}_{n+1}: & (sB)^{\otimes n}\to \underline{\Hom}_\Cscr(M,N),\; \textbf{D}_{n+1}(sa_1,\ldots,sa_n)(-)=D_{n+1}(sa_1,\ldots,sa_n,-)
\end{align*}
Later we will also use
\begin{align*}
\textbf{b}_{N,2}: &  \underline{\Hom}_\Cscr(M,N)\otimes sB\to\underline{\Hom}_\Cscr(M,N),\; (\textbf{b}_{N,2}(f,sa))(m)=(-1)^{|sa|\cdot |f|}b_{N,2}(sa,f(m)) \\
\textbf{b}_{M,2}: & \underline{\Hom}_\Cscr(M,N)\otimes sB\to\underline{\Hom}_\Cscr(M,N),\; (\textbf{b}_{M,2}(f,sa))(m)=(-1)^{|f|} (f\circ b_{M,2})(sa,m) 
\end{align*}
Now that we have all of these maps in place, let us go back to
(\ref{ref-9.13-66}) and write down the corresponding equation for the adjoint
map $(sB)^{\otimes n}\to \underline{\Hom}_\Cscr(M,N)$,
\[\textbf{b}_{N,1} \circ \textbf{D}_{n+1} +(-1)^{|D_{n+1}|}\textbf{b}_{M,1}\circ \textbf{D}_{n+1} + \textbf{D}_{n+1}\circ \left( \sum_{p+r=n-1} \id^{\otimes p}\otimes b_{B,1}\otimes \id^{\otimes r}\right)=0\]
which, remembering that $D$ has degree 1, becomes
\[\textbf{b}_{N,1} \circ \textbf{D}_{n+1}-\textbf{b}_{M,1}\circ \textbf{D}_{n+1} + \textbf{D}_{n+1}\circ \left( \sum_{p+r=n-1} \id^{\otimes p}\otimes b_{B,1}\otimes \id^{\otimes r}\right)=0\,.\]

We may consider $\textbf{D}_{n+1}$ as an element in
$\underline{\Hom}_k((sB)^{\otimes n},\underline{\Hom}_\Cscr(M,N))$. The induced differential on $\underline{\Hom}_\Cscr(M,N)$ is
$b_{{\underline{\Hom}_\Cscr}(M,N),1}=\textbf{b}_{N,1}-\textbf{b}_{M,1}$. Then
$\underline{\Hom}_k((sB)^{\otimes n},\underline{\Hom}_\Cscr(M,N))$ is a complex with differential
$[b_1,-]$. By the computation above,
$[b_1,\textbf{D}_{n+1}]=0$. Define $\bar{\textbf{D}}_{n+1}$ as the
image of $\textbf{D}_{n+1}$ in
\[
H^1(\underline{\Hom}_k((sB)^{\otimes n},\underline{\Hom}_\Cscr(M,N)))=\underline{\Hom}_k(H^*(B)^{\otimes n},\Ext^*_\Cscr(M,N))_{-n+1}.
\]
Now evaluate $b_N\circ D+ D\circ b_M=0$ on $(sB)^{\otimes n+1}\otimes M$ to get
\begin{align*}
&b_{N,1}\circ D_{n+2} +b_{N,2}\circ(\id\otimes D_{n+1}) + \\
&+D_{n+2}\circ \left( \sum_{p+r+2=n+2} \id^{\otimes p}\otimes b_{B,1}\otimes \id^{\otimes r}\otimes \id_M + \id_B^{\otimes n+1}\otimes b_{M,1}\right) \\
&+D_{n+1}\circ \left( \sum_{p+r+2=n+1} \id^{\otimes p}\otimes b_{B,2}\otimes \id^{\otimes r}\otimes \id_M + \id_B^{\otimes n}\otimes b_{M,2}\right)=0\,.
\end{align*}
Rewrite the sums as
\begin{align*}
&b_{N,1}\circ D_{n+2} +D_{n+2}\circ ( \id_B^{\otimes n+1} \otimes b_{M,1} ) \\
&+D_{n+2}\circ \left( \sum_{p+r=n} \id^{\otimes p}\otimes b_{B,1}\otimes \id^{\otimes r}\otimes \id_M \right) \\
&+b_{N,2}\circ(\id\otimes D_{n+1})+D_{n+1}\circ (\id_B^{\otimes n}\otimes b_{M,2})  \\
&+D_{n+1}\circ \left( \sum_{p+r=n-1} \id^{\otimes p}\otimes b_{B,2}\otimes \id^{\otimes r}\otimes \id_M \right)=0
\end{align*}
By adjointness this gives maps $(sB)^{\otimes n+1}\to \underline{\Hom}_\Cscr(M,N)$ such that (remember that $D$ has degree 1, so $(-1)^{|D_{n+2}|}=-1$)
\begin{align*}
&\textbf{b}_{N,1}\circ \textbf{D}_{n+2} - \textbf{b}_{M,1}\circ \textbf{D}_{n+2} \\
&+\textbf{D}_{n+2}\circ \left( \sum_{p+r=n} \id^{\otimes p}\otimes b_{B,1}\otimes \id^{\otimes r}\right) \\
&+\textbf{b}_{N,2}\circ(\id\otimes \textbf{D}_{n+1}) -\textbf{b}_{M,2}\circ (\id\otimes \textbf{D}_{n+1})   \\
&+\textbf{D}_{n+1}\circ \left( \sum_{p+r=n-1} \id^{\otimes p}\otimes b_{B,2}\otimes \id^{\otimes r} \right)=0\,;
\end{align*}
this means that $\bar{\textbf{D}}_{n+1}$ is a cocycle in $(\underline{\Hom}_k(H^*(B)^{\otimes n},\Ext^*_\Cscr(M,N),d_{\mathrm{Hoch}})_{-n+1}$, hence an element of 
\[\HH^{n}(H^*(B),\Ext^*(M,N))_{-n+1}.\]

Now let $f'_{\leq n}$ be like $f_{\leq n}$ except that $f_n$ is
replaced by $f_n+\delta_n$, where $\delta_n:(sB)^{\otimes n-1}\otimes
M\to N$ is such that $[b_1,\delta_n]=0$, and let $D'=b_N\circ f'_{\leq
  n}-f'_{\leq n}\circ b_M$. Since $b_N\circ f'_{\leq n}-f'_{\leq
  n}\circ b_M|_{sB^{\otimes i}\otimes M}=0$ for $i=0,\ldots,n-1$, we still have
$D'_i=0$ for $i=1,\dots, n$, whereas
\[D'_{n+1}=D_{n+1}+b_{N,2}\circ(\id\otimes \delta_n)-\delta_n\circ(\sum_{p+r=n-1} \id^{\otimes p}\otimes b_{B,2}\otimes \id^{\otimes r} +\id^{\otimes n-1}\otimes b_{M,2})\]
The corresponding map $\textbf{D}_{n+1}':(sB)^{\otimes n}\r \underline{\Hom}_{\Cscr}(M, N)$ is given by
\[
\textbf{D}_{n+1}'=\textbf{D}_{n+1}+\textbf{b}_{N,2}\circ(\id\otimes \boldsymbol\delta_n)-\boldsymbol\delta_n\circ (\sum_{p+r=n-1} \id^{\otimes p}\otimes b_{B,2}\otimes \id^{\otimes r}) -\textbf{b}_{M,2}\circ (\id\otimes \boldsymbol\delta_n)
\]
where $\boldsymbol\delta_{n}: (sB)^{\otimes n-1}\to
\underline{\Hom}_\Cscr(M,N),\;
\boldsymbol\delta_{n}(sa_1,\ldots,sa_{n-1})(-)=\delta_{n}(sa_1,\ldots,sa_{n-1},-)$. Hence
$\bar{\textbf{D}}'_{n+1}= \bar{\textbf{D}}_{n+1}\pm
d_{\mathrm{Hoch}}(\bar{\boldsymbol\delta}_n)$. Since we have assumed
$\HH^{n}(H^*(B),\Ext^*(M,N))_{-n+1}=0$, it means
$\bar{\textbf{D}}_{n+1}$ is a coboundary, and hence we can assume it
is actually zero after replacing it with $
\bar{\textbf{D}}_{n+1}\pm d_{\mathrm{Hoch}}(\bar{\boldsymbol\delta}_n)$.

Given a map $f_{n+1}$, the condition that $f_{\leq n+1}$ needs to
satisfy to complete the induction step is $b_N \circ f_{\leq
  n+1}=f_{\leq n+1}\circ b_M$ when restricted to $(sB)^{\otimes
  n}\otimes M$. 
 This gives
\[
[b_1,f_{n+1}]=-(b_N\circ f_{\leq n}-f_{\leq n}\circ b_M)\qquad\text{(on $(sB)^{\otimes n}\otimes M$)}
\]
which gives
\[[b_1,f_{n+1}]=-D_{n+1}\]
and since $\bar{\textbf{D}}_{n+1}=0$, and hence $\bar{D}_{n+1}=0$, this equation has a solution.\qed

\subsection{Proof of Proposition \ref{ref-9.1.3-50}}
Let $M,N\in D(B, \mathscr{C})$ be as in the statement of Proposition \ref{ref-9.1.3-50} and
assume that $N$ is fibrant for the model structure on $C(B,\Cscr)$ \cite[Prop.\ 5.1]{lowenvdb2}. Assume that $g:M\r N$ is a morphism in $C_\infty(B,\Cscr)$ which
is sent to zero in 
$\mathrm{Hom}_{D(\mathscr{C})_{H^\ast(B)}}
(FM,FN)
\subset \mathrm{Hom}_{D(\mathscr{C})}(M,N)$. 

Assume that $g_i=0$ holds for $i\leq n$. We will change $g$ by a
homotopy $h$, with $h_i=0$ for $i\neq n,n+1$ such that $g_i=0$ for $i\le n+1$. Iterating this we
find that our original $g:M\r N$ is homopic to zero.

Consider first $n=0$. Then, since $g$ is zero in $\mathrm{Hom}_{D(\mathscr{C})}(M,N)$, we have
$h_1:M\r N$ such that $g_1=b_{N,1}\circ h_1+h_1\circ b_{M,1}$. Let
$h:\BB M\r \BB N$ be the coalgebra map $(h_1,0,\cdots)$ and put
$g'=g-(b_N\circ h+h\circ b_M)$. Then $g'_1=0$.

Now assume $n\ge 1$.
When evaluating $b_N\circ g=g\circ b_M$ on $(sB)^{\otimes n}\otimes M$ we get
\[[b_1,g_{n+1}]=0\,.\]
Hence $g_{n+1}$ is a cocycle in the complex
$\underline{\Hom}_k((sB)^{\otimes n}\otimes M,N)$ with differential
$[b_1,-]$. We may consider it as an element of
$\underline{\Hom}_k((sB)^{\otimes n},\underline{\Hom}_\Cscr(M,N))_{0}$ by adjointness. Call the
adjoint map $\textbf{g}_{n+1}$, and $\bar{\textbf{g}}_{n+1}$ its image
in the cohomology $H^0(\underline{\Hom}_k((sB)^{\otimes n}\otimes
M,N))=\underline{\Hom}_k(H^*(B)^{\otimes n},\Ext^*_\Cscr(M,N))_{-n}$.

Now evaluate $b_N\circ g=g\circ b_M$ on $(sB)^{\otimes n+1}\otimes M$ to get:
\begin{align*}
&b_{N,1}\circ g_{n+2} +b_{N,2}\circ(\id\otimes g_{n+1}) + \\
&-g_{n+2}\circ \left( \sum_{p+r+2=n+2} \id^{\otimes p}\otimes b_{B,1}\otimes \id^{\otimes r}\otimes \id_M + \id_B^{\otimes n+1}\otimes b_{M,1}\right) \\
&-g_{n+1}\circ \left( \sum_{p+r+2=n+1} \id^{\otimes p}\otimes b_{B,2}\otimes \id^{\otimes r}\otimes \id_M + \id_B^{\otimes n}\otimes b_{M,2}\right)=0
\end{align*}
By adjointness this gives maps $(sB)^{\otimes n+1}\to \underline{\Hom}_\Cscr(M,N)$ such that
\begin{align*}
&\textbf{b}_{N,1}\circ \textbf{g}_{n+2} - \textbf{b}_{M,1}\circ \textbf{g}_{n+2} \\
&-\textbf{g}_{n+2}\circ \left( \sum_{p+r=n} \id^{\otimes p}\otimes b_{B,1}\otimes \id^{\otimes r}\right) \\
&+\textbf{b}_{N,2}\circ(\id\otimes \textbf{g}_{n+1}) -\textbf{b}_{M,2}\circ (\id\otimes \textbf{g}_{n+1})   \\
&-\textbf{g}_{n+1}\circ \left( \sum_{p+r=n-1} \id^{\otimes p}\otimes b_{B,2}\otimes \id^{\otimes r} \right)=0
\end{align*}
Since $g$ has degree zero, this means that
$d_{\mathrm{Hoch}}(\bar{\textbf{g}}_{n+1})=0$, i.e. that
$\bar{\textbf{g}}_{n+1}$ is a cocycle in $(\underline{\Hom}_k(H^*(B)^{\otimes
  n},\Ext^*_\Cscr(M,N),d_{\mathrm{Hoch}})_{-n}$ hence an element of
\[\HH^{n}(H^*(B),\Ext^*_\Cscr(M,N))_{-n}\,;\]
which we have assumed to be zero for $n\geq 1$, hence $\bar{\textbf{g}}_{n+1}$ is a coboundary. Which means that there exists a $\bar{\textbf{h}}_n \in (\underline{\Hom}_k(H^*(B)^{\otimes n-1},\Ext^*_\Cscr(M,N))_{-n}$ such that $\bar{\textbf{g}}_{n+1}=d_{\mathrm{Hoch}}(\bar{\textbf{h}}_n)$. 

We may lift $\bar{\textbf{h}}_n$ to a map $\textbf{h}_n:(sB)^{\otimes n-1}\to \Hom_\Cscr(M,N)$ such that $[b_1,\textbf{h}_n]=0$, or equivalently by adjointness a map $h_n:(sB)^{\otimes n-1}\otimes M\to N$. Because $\bar{\textbf{g}}_{n+1}=d_{\mathrm{Hoch}}(\bar{\textbf{h}}_n)$, their difference is a boundary:
\begin{equation}\label{ref-9.14-67}
  \textbf{g}_{n+1}-(\textbf{b}_{N,2}(\id\otimes \textbf{h}_n)+ \textbf{b}_{M,2}\circ(\id\otimes \textbf{h}_n) -(-1)^{|\textbf{h}|=-1} \textbf{h}_n\circ(\sum \id^{\otimes p}\otimes b_{B,2}\otimes \id^{\otimes r}))=[b_1,\textbf{h}_{n+1}]
\end{equation}
for some $h_{n+1}$ in $\underline{\Hom}_k((sB)^{\otimes n},\underline{\Hom}_\Cscr(M,N))_{-1}$. 

Let $h$ be the comodule map $\BB M\r \BB N$ such that $h_i=0$ for $i\neq n,n+1$ and $h_n$, $h_{n+1}$ as above.
 These $h$  will be the required homotopy. In fact, by rewriting (\ref{ref-9.14-67}), we obtain  
\begin{align*}
\textbf{g}_{n+1}=&\textbf{b}_{N,1}\circ \textbf{h}_{n+1} - \textbf{b}_{M,1}\circ \textbf{h}_{n+1} \\
&+\textbf{h}_{n+1}\circ \left( \sum_{p+r=n} \id^{\otimes p}\otimes b_{B,1}\otimes \id^{\otimes r}\right) \\
&+\textbf{b}_{N,2}\circ(\id\otimes \textbf{h}_{n}) -\textbf{b}_{M,2}\circ (\id\otimes \textbf{h}_{n})   \\
&+\textbf{h}_{n}\circ \left( \sum_{p+r=n-1} \id^{\otimes p}\otimes b_{B,2}\otimes \id^{\otimes r} \right)
\end{align*}
which, by adjointness, gives us ($h$ has degree -1 so this accounts for the change in signs)
\begin{align}\label{ref-9.15-68}
g_{n+1}=&b_{N,1}\circ h_{n+1} + b_{M,1}\circ h_{n+1} \\
&+h_{n+1}\circ \left( \sum_{p+r=n} \id^{\otimes p}\otimes b_{B,1}\otimes \id^{\otimes r}\right) \notag\\
&+b_{N,2}\circ(\id\otimes h_{n}) +b_{M,2}\circ (\id\otimes h_{n})   \notag\\
&+h_{n}\circ \left( \sum_{p+r=n-1} \id^{\otimes p}\otimes b_{B,2}\otimes \id^{\otimes r} \right) \notag
\end{align}
Define $g'=g-(b_N\circ h+h\circ b_M)$. It follows from \eqref{ref-9.15-68} that $g'_{n+1}=0$ and hence we are done. \qed
\appendix
\section{Proof of Proposition \ref{ref-6.1-34} for the two loop quiver}
\label{ref-A-69}
Let $Q$ be the two loop quiver, and let $\alpha\in \NN-\{0\}$. The proof of Proposition
\ref{ref-6.1-34} depends crucially on the construction of a representation $U$ over
$LQ$ for $L/k$ a field of transcendence degree three such that $\HH^3(L,\End_{kQ}(U))\neq 0$.

Assume we try to find our $U$ as defined over the generic point of a closed
subvariety of dimension three in a suitable $M_{\alpha,\lambda,Q}$.
In this case there is only one possibility
for $\lambda$, namely $\lambda=0$. If $\alpha=1$ then $\dim M_{\alpha,0,Q}=2$  which it too small
for our purposes. However, when $\alpha>1$ then $\alpha$ is divisible and so Proposition
\ref{ref-2.5.2-11} does not apply.

We proceed as follows. Assume $n=\alpha>1$ and put $A=kQ$. Then it is well known \cite{Procesi2} that $U_{n,A}$ is nonempty
and that $\Ascr_n$ is not split. In fact, it is generically a division algebra\footnote{Suitably called
the generic division algebra of index $n$.}. Let $x$ be the generic point of
a three dimensional irreducible subvariety of $U_{n,A}$ and put $K=k(x)$. Let $C=i^\ast_x\Ascr_n$. Then
$C$ is a central simple algebra of rank $n^2$ over $K$. Thus we have $C=M_m(D)$ 
 where $D$ is a division algebra such that $[D:K]=p^2$ with $n=pm$. Let $L/K$ be a maximal subfield
of $D$. Then $L\otimes_K C=M_m(L\otimes_K D)=M_{mp}(L)=\End_L(U)$ where $U=L^{n}$.  As in the proof of
Lemma \ref{ref-2.2.1-3} we obtain 
\[
\End_A(U)=\End_C(U)
\]
The following lemma
does what we want by the Hochschild-Kostant-Rosenberg theorem \cite{HKR}. 
\begin{lemma} One has
$
\HH^\ast(L,\End_C(U))\cong \HH^\ast(K,L)
$.
\end{lemma}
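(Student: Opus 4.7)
The plan is to identify $\End_C(U)$ as an $L$-bimodule explicitly and then apply the change-of-rings formula of Corollary~\ref{ref-4.5-25}.

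First I would show that $\End_C(U)\cong D^\circ$ as $K$-algebras. Indeed, $U=L^n$ has $K$-dimension $np=mp^2=\dim_K D^m$, and over the simple algebra $C=M_m(D)$ the unique simple module (up to isomorphism) is $D^m$; therefore $U\cong D^m$ as $C$-modules and $\End_C(U)\cong D^\circ$. The canonical inclusion $L\hookrightarrow\End_C(U)$ coming from scalar multiplication on $U=L^n$ realises $L$ as a maximal subfield of $D^\circ$, equipping $\End_C(U)$ with an $L$-bimodule structure.

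The central claim is: \emph{any central simple $K$-algebra $A$ of $K$-dimension $p^2$ containing $L$ as a maximal subfield is isomorphic to $L\otimes_K L$ as an $L$-bimodule.} Granting this, we apply it to both $A=\End_C(U)\cong D^\circ$ and $A=\Hom_K(L,L)\cong M_p(K)$ (in which $L$ embeds as a maximal subfield via multiplication operators), to obtain an $L$-bimodule isomorphism $\End_C(U)\cong\Hom_K(L,L)$. To prove the claim, note that since $L/K$ is separable in characteristic zero, $L\otimes_K L\cong\prod_i L_i$ decomposes as a finite product of fields with $\sum_i\dim_L L_i=p$. Tensoring the inclusion $L\hookrightarrow A$ with itself over $K$ yields an injection $L\otimes_K L\hookrightarrow A\otimes_K A^\circ\cong\End_K(A)$, so $L\otimes_K L$ acts faithfully on $A$. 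Writing $A\cong\bigoplus_i L_i^{\oplus m_i}$ as $L\otimes_K L$-modules, faithfulness forces every $m_i\ge 1$; then $p=\dim_L A=\sum_i m_i\dim_L L_i\ge\sum_i\dim_L L_i=p$ forces $m_i=1$ for each $i$, yielding $A\cong L\otimes_K L$.

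With the $L$-bimodule identification $\End_C(U)\cong\Hom_K(L,L)$ in hand, Corollary~\ref{ref-4.5-25} applied with $\rho:K\hookrightarrow L$ and $N=L$ (viewed as a $K$-$L$-bimodule by multiplication), together with $K$-flatness of $L$ giving $\RHom_K(L,L)=\Hom_K(L,L)$, delivers
\[
\HH^\ast(L,\Hom_K(L,L))\cong\HH^\ast(K,L),
\]
and the lemma follows. The main obstacle is the middle step: the $L$-bimodule structure on such a central simple $K$-algebra must be shown to depend only on $L/K$ and not on the Brauer class of $A$, so that the genuinely non-split algebra $D^\circ$ can be replaced by the transparent matrix algebra $\End_K(L)$.
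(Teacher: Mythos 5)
Your proof is correct, and it reaches the crucial $L$-bimodule identification by a genuinely different (and somewhat more elementary) route than the paper. The paper argues via the double centralizer theorem: taking centralizers of $1\otimes_K L$ and of $L\otimes_K L$ inside $D\otimes_K D^\circ\cong\End_K(D)$ yields $D\otimes_K L\cong\End_L(D_L)$ and $L\otimes_K L\cong\End_{L\otimes_K L}({}_LD_L)$, whence ${}_LD_L\cong L\otimes_K L$ because $L\otimes_K L$ is a product of fields; the identification $L\otimes_K L\cong\Hom_K(L,L)$ is then made separately, using $L\cong\Hom_K(L,K)$ as left $L$-modules. You instead prove one clean statement covering both sides at once: for \emph{any} central simple $K$-algebra $A$ of degree $p$ containing $L$ as a maximal subfield, the ring map $L\otimes_K L\to A\otimes_K A^\circ\cong\End_K(A)$ is injective (tensoring injections over the field $K$), so the $L$-bimodule $A$ is a faithful module over the product of fields $L\otimes_K L\cong\prod_iL_i$; decomposing $A\cong\bigoplus_iL_i^{\oplus m_i}$, faithfulness gives $m_i\ge 1$ and the count $\dim_LA=p=\dim_L(L\otimes_K L)$ forces $m_i=1$. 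Applying this symmetrically to $D^\circ\cong\End_C(U)$ and to $\End_K(L)\cong M_p(K)$ gives $\End_C(U)\cong\Hom_K(L,L)$ as $L$-bimodules, and Corollary~\ref{ref-4.5-25} concludes as in the paper. Your argument trades the centralizer theorem for a direct faithfulness-plus-dimension count, which is arguably more self-contained, at the small cost of proving a slightly more general bimodule isomorphism than strictly needed. One tiny imprecision: what is used to identify $\RHom_K(L,L)$ with $\Hom_K(L,L)$ is that $L$ is $K$-projective (or that $K$ has global dimension zero), not ``$K$-flatness''; since $K$ is a field this is immaterial.
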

\begin{proof}
By Morita theory we have 
\[
\End_C(U)=\End_D(U_0)
\]
where $U_0=L^p$. So we may and we will assume that $C=D$, $U=U_0$, $m=1$, $n=p$.

\medskip

Since $D/K$ is central simple we have an isomorphism of algebras
\[
D\otimes_K D^\circ\r\End_K(D):d\otimes d'\mapsto (x\mapsto dxd')
\]
Taking centralisers of $1\otimes_K L$ and $L\otimes_KL$ on both sides, we find corresponding
isomorphisms
\begin{equation}
\label{ref-A.1-70}
D\otimes_K L\cong \End_L(D_L)
\end{equation}
\begin{equation}
\label{ref-A.2-71}
L\otimes_K L\cong \End_{L\otimes_K L}({}_L D_L),
\end{equation}
where $D_L$, ${}_L D_L$ denote $D$ viewed respectively as a right $L$-module and as a left-right $L$-bimodule. Since $L\otimes_K L$ is a direct sum of fields, \eqref{ref-A.2-71} implies that ${}_L D_L$ is isomorphic
to $L\otimes_K L$ as $L$-bimodules.  Since we also have  $L=\Hom_K(L,K)$ as $L$-vector spaces (both are one-dimensional),
we obtain that ${}_L D_L$ is also isomorphic to
\begin{equation}
\label{ref-A.3-72}
{}_LD_L \cong \Hom_K(L,K)\otimes_K L=\Hom_K(L,L)
\end{equation}
as $L$-bimodules.

The equality \eqref{ref-A.1-70} implies  that $U\cong D_L$ with the standard left $D$-module structure on $D$. Thus
\[
\End_D(U)=\End_D(D_L)=D^\circ
\]
with the $L$-bimodule structure on $D^\circ$ given by the left and right action. Combining this with \eqref{ref-A.3-72}
we get
\begin{align*}
\HH^\ast(L,\End_C(U))
&\cong \HH^\ast(L,\Hom_K(L,L))\\
&\cong \HH^\ast(K,L)
\end{align*}
where in the last line we have used Corollary \ref{ref-4.5-25}.
\end{proof}

\begin{thebibliography}{10}

\bibitem{ALeB}
J.~Adriaenssens and L.~Le~Bruyn, \emph{Local quivers and stable
  representations}, Comm. Algebra \textbf{31} (2003), no.~4, 1777--1797.

\bibitem{Ballard}
M.~R. Ballard, \emph{Equivalences of derived categories of sheaves on
  quasi-projective schemes}, arXiv:0905.3148 [math.AG].

\bibitem{Beke}
T.~Beke, \emph{Sheafifiable homotopy model categories}, Math. Proc. Cambridge
  Philos. Soc. \textbf{129} (2000), no.~3, 447--475.

\bibitem{BondalVdB}
A.~Bondal and M.~Van~den Bergh, \emph{Generators and representability of
  functors in commutative and noncommutative geometry}, Mosc. Math. J.
  \textbf{3} (2003), no.~1, 1--36, 258.

\bibitem{Brion}
M.~Brion, \emph{Representations of quivers}, http://www-fourier.ujf-grenoble.fr/IMG/pdf/notes quivers.pdf

\bibitem{COV}
A.~Canonaco, D.~Orlov, and P.~Stellari, \emph{Does full imply faithful?}, J.
  Noncommut. Geom. \textbf{7} (2013), no.~2, 357--371.

\bibitem{CS4}
A.~Canonaco and P.~Stellari, \emph{Fourier-Mukai functors in the supported
  case}, arXiv:1010.0798 [math.AG], to appear in Compositio Math.

\bibitem{CS3}
\bysame, \emph{Twisted {F}ourier-{M}ukai functors}, Adv. Math. \textbf{212}
  (2007), no.~2, 484--503.

\bibitem{CS1}
\bysame, \emph{Fourier-{M}ukai functors: a survey}, Derived categories in
  algebraic geometry, EMS Ser. Congr. Rep., Eur. Math. Soc., Z\"urich, 2012,
  pp.~27--60.

\bibitem{CS2}
\bysame, \emph{Non-uniqueness of {F}ourier-{M}ukai kernels}, Math. Z.
  \textbf{272} (2012), no.~1-2, 577--588. \MR{2968243}

\bibitem{WeymanDerksen}
H.~Derksen and J.~Weyman, \emph{Semi-invariants of quivers and saturation
  for {L}ittlewood-{R}ichardson coefficients}, J. Amer. Math. Soc. \textbf{13}
  (2000), no.~3, 467--479.

\bibitem{DomokosZubkov}
M.~Domokos and A.~N. Zubkov, \emph{Semi-invariants of quivers as determinants},
  Transform. Groups \textbf{6} (2001), no.~1, 9--24.

\bibitem{Faltings}
G.~Faltings, \emph{Stable {$G$}-bundles and projective connections}, J.
  Algebraic Geom. \textbf{2} (1993), no.~3, 507--568.

\bibitem{Gabriel2}
P.~Gabriel, \emph{Auslander-{R}eiten sequences and representation-finite
  algebras}, Representation theory, I (Proc. Workshop, Carleton Univ., Ottawa,
  Ont.), Lecture Notes in Math., vol. 831, Springer, Berlin, 1979, pp.~1--71.

\bibitem{Green1}
E.~L. Green, \emph{Multiplicative bases, {G}r\"obner bases, and right
  {G}r\"obner bases}, J. Symbolic Comput. \textbf{29} (2000), no.~4-5,
  601--623, Symbolic computation in algebra, analysis, and geometry (Berkeley,
  CA, 1998).

\bibitem{RD}
R.~Hartshorne, \emph{Residues and duality}, Lecture notes in mathematics,
  vol.~20, Springer Verlag, Berlin, 1966.

\bibitem{HKR}
G.~Hochschild, B.~Kostant, and A.~Rosenberg, \emph{Differential forms on
  regular affine algebras}, Trans. Amer. Math. Soc. \textbf{102} (1962),
  383--408.

\bibitem{Hovey}
M.~Hovey, \emph{Model category structures on chain complexes of sheaves},
  Trans. Amer. Math. Soc. \textbf{353} (2001), no.~6, 2441--2457 (electronic).

\bibitem{Kac1}
V.~Kac, \emph{Infinite root systems, representations of graphs and invariant
  theory}, Invent. Math. \textbf{56} (1980), 57--92.

\bibitem{Kawamata2}
Y.~Kawamata, \emph{Equivalences of derived categories of sheaves on smooth
  stacks}, Amer. J. Math. \textbf{126} (2004), no.~5, 1057--1083.

\bibitem{Keller21}
B.~Keller, \emph{Bimodule complexes via strong homotopy actions}, Algebr.
  Represent. Theory \textbf{3} (2000), no.~4, 357--376, Special issue dedicated
  to Klaus Roggenkamp on the occasion of his 60th birthday.

\bibitem{King}
A.~D. King, \emph{Moduli of representations of finite-dimensional algebras},
  Quart. J. Math. Oxford Ser. (2) \textbf{45} (1994), no.~180, 515--530.

\bibitem{Lefevre}
K.~Lef\`evre-Hasegawa, \emph{Sur les {$A_\infty$}-cat\'egories}, Ph.D. thesis,
  Universit\'e Paris 7, 2003.
  
\bibitem{Lipman} J.~Lipman, \emph{Foundations of Grothendieck Duality for Diagrams of Schemes}, Lecture Notes in Math. \textbf{1960}, Springer-Verlag, New York, 2009, 1-259

\bibitem{lowenvdb2}
W.~Lowen and M.~Van~den Bergh, \emph{Hochschild cohomology of abelian
  categories and ringed spaces}, Adv. Math. \textbf{198} (2005), no.~1,
  172--221.

\bibitem{LowenVdB1}
\bysame, \emph{Deformation theory of abelian categories}, Trans. Amer. Math.
  Soc. \textbf{358} (2006), no.~12, 5441--5483.

\bibitem{OrlovLunts}
V.~A. Lunts and D.~O. Orlov, \emph{Uniqueness of enhancement for triangulated
  categories}, J. Amer. Math. Soc. \textbf{23} (2010), no.~3, 853--908.

\bibitem{Neeman1}
A.~Neeman, \emph{The {G}rothendieck duality theorem via {B}ousfield's
  techniques and {B}rown representability}, J. Amer. Math. Soc. \textbf{9}
  (1996), 205--236.

\bibitem{newstead}
P.~E. Newstead, \emph{Introduction to moduli problems and orbit spaces},
  Lectures on Mathematics and Physics \textbf{51}, Tata Institute of Fundamental
  Research, Bombay, 1978.

\bibitem{Orlov4}
D.~O. Orlov, \emph{Equivalences of derived categories and {$K3$} surfaces}, J.
  Math. Sci. (New York) \textbf{84} (1997), no.~5, 1361--1381, Algebraic
  geometry, 7.
 

\bibitem{Osofsky}
B.L.~Osofky, {\em Hochschild dimension of a separately generated field}, Proc. Amer. Math. Soc. \textbf{41} (1973), 24-30. 

\bibitem{Procesi2}
C.~Procesi, \emph{Rings with polynomial identities}, Pure and Applied Mathematics \textbf{17}, Marcel Dekker, Inc., New York, 1973.

\bibitem{Procesi3}
C.~Procesi, \emph{Lie Groups: an approach through invariants and representation},  Universitext. Springer, New York, 2007

\bibitem{Ringel4}
C.~M. Ringel, \emph{Tame algebras and integral quadratic forms}, vol. 1099,
  Springer Verlag, 1984.

\bibitem{Rizzardo1}
A.~Rizzardo, \emph{On the existence of {F}ourier-{M}ukai kernels},
  arXiv:1210.1419 [math.AG].

\bibitem{Rizzardo}
\bysame, \emph{Representability of cohomological functors over extension
  fields}, to appear in J. Noncomm. Geom, 2012.

\bibitem{schofieldup}
A.~Schofield, \emph{Private communication}, Notes available at
  \texttt{http://hardy.uhasselt.be/ schofield\_notes.pdf}.

\bibitem{Schofield92}
\bysame, \emph{General representations of quivers}, Proc. London Math. Soc. (3)
  \textbf{65} (1992), no.~1, 46--64.

\bibitem{Schofield99}
\bysame, \emph{Birational classification of moduli spaces of representations of
  quivers}, Indag. Math. (N.S.) \textbf{12} (2001), no.~3, 407--432.

\bibitem{SchVdB3}
A.~Schofield and M.~Van~den Bergh, \emph{Semi-invariants of quivers for
  arbitrary dimension vectors}, Indag. Math. (N.S.) \textbf{12} (2001), no.~1,
  125-138. \MR{1 908 144}

\bibitem{Sosna}
P.~Sosna, \emph{Scalar extensions of triangulated categories}, Appl. Categ. Structures \textbf{22} (2014), no. 1, 211-227.

 \bibitem{Weibel}
C.~Weibel,, {\em An Introduction to Homological Algebra}, Cambridge Studies in Advanced Mathematics \textbf{38}, Cambridge University Press, Cambridge, 1994. 

\end{thebibliography}
\def\cprime{$'$} \def\cprime{$'$} \def\cprime{$'$}
\providecommand{\bysame}{\leavevmode\hbox to3em{\hrulefill}\thinspace}
\providecommand{\MR}{\relax\ifhmode\unskip\space\fi MR }
\providecommand{\MRhref}[2]{%
  \href{http://www.ams.org/mathscinet-getitem?mr=#1}{#2}
}
\providecommand{\href}[2]{#2}

\end{document}